\providecommand{\U}[1]{\protect\rule{.1in}{.1in}}
\newtheorem{theorem}{Theorem}
\newtheorem{condition}[theorem]{Condition}
\newtheorem{definition}[theorem]{Definition}
\newtheorem{lemma}[theorem]{Lemma}
\newtheorem{proposition}[theorem]{Proposition}
\newtheorem{remark}[theorem]{Remark}
\newenvironment{proof}[1][Proof]{\noindent\textbf{#1.} }{\ \rule{0.5em}{0.5em}}
\begin{document}

\title{Asymptotic Control for a Class of Piecewise Deterministic Markov Processes
Associated to Temperate Viruses}
\author{Dan Goreac \thanks{Universit\'{e} Paris-Est, LAMA (UMR 8050), UPEMLV, UPEC,
CNRS, F-77454, Marne-la-Vall\'{e}e, France, Dan.Goreac@u-pem.fr}%
\thanks{\textbf{Acknowledgement.} This work has been partially supported by he
French National Research Agency project PIECE, number
\textbf{ANR-12-JS01-0006.}}}
\date{}
\maketitle

\begin{abstract}
We aim at characterizing the asymptotic behavior of value functions in the
control of piecewise deterministic Markov processes (PDMP) of switch type
under nonexpansive assumptions. For a particular class of processes inspired
by temperate viruses, we show that uniform limits of discounted problems as
the discount decreases to zero and time-averaged problems as the time horizon
increases to infinity exist and coincide. The arguments allow the limit value
to depend on initial configuration of the system and do not require
dissipative properties on the dynamics. The approach strongly relies on
viscosity techniques, linear programming arguments and coupling via random
measures associated to PDMP. As an intermediate step in our approach, we
present the approximation of discounted value functions when using piecewise
constant (in time) open-loop policies.

\end{abstract}

\textbf{AMS Classification}: 49L25, 60J25, 93E20, 92C42

\textbf{Keywords: }Piecewise Deterministic Markov Process; Switch Process;
Stochastic Control; Limit Value Function; Stochastic Gene Networks; Phage
$\lambda$

\textbf{Acknowledgement.} The author would like to thank the anonymous
referees for constructive remarks allowing to improve the manuscript.

\section{Introduction}

We focus on the study of some asymptotic properties in the control of a
particular family of piecewise deterministic Markov processes (abbreviated
PDMP), non diffusive, jump processes introduced in the seminal paper
\cite{Davis_84}. Namely, we are concerned with the existence of a limit of the
value functions minimizing the Ces\`{a}ro-type averages of some cost
functional as the time increases to infinity for controlled switch processes.
The main theoretical contribution of the paper is that the arguments in our
proofs are entirely independent on dissipativity properties of the PDMP and
they apply under mild nonexpansivity assumptions. Concerning the potential
applications, our systems are derived from the theory of stochastic gene
networks (and, in particular, genetic applets modelling temperate viruses).
Readers wishing to get acquainted to biological or mathematical aspects in
these models are referred to \cite{cook_gerber_tapscott_98},
\cite{hasty_pradines_dolnik_collins_00}, \cite{crudu_debussche_radulescu_09},
\cite{crudu_Debussche_Muller_Radulescu_2012}, \cite{G8}).

Switch processes can be described by a couple $\left(  \gamma_{\cdot}%
^{\gamma_{0},x_{0},u},X_{\cdot}^{\gamma_{0},x_{0},u}\right)  $, where the
first component is a pure jump process called mode and taking its values in
some finite set $\mathbb{M}$. The couple process is governed by a jump rate
and a transition measure, both depending on the current state of the system.
Between consecutive jumps, $X_{\cdot}^{\gamma_{0},x_{0},u}$ evolves according
to some mode-dependent flow. Finally, these characteristics (rate, measure,
flow) depend on an external control parameter $u$. Precise assumptions and
construction make the object of Section \ref{Section2}. In connection to these
jump systems, we consider the Abel-type (resp. Ces\`{a}ro-type) average
\[
\left.
\begin{array}
[c]{l}%
v^{\delta}\left(  \gamma_{0},x_{0}\right)  :=\underset{u}{\inf}\delta
\mathbb{E}\left[  \int_{0}^{\infty}e^{-\delta t}h\left(  \gamma_{t}%
^{\gamma_{0},x_{0},u},X_{t}^{\gamma_{0},x_{0},u},u_{t}\right)  dt\right]
,\text{ }\\
V_{T}\left(  \gamma_{0},x_{0}\right)  :=\underset{u}{\inf}\frac{1}%
{T}\mathbb{E}\left[  \int_{0}^{T}h\left(  \gamma_{t}^{\gamma_{0},x_{0}%
,u},X_{t}^{\gamma_{0},x_{0},u},u_{t}\right)  dt\right]  ,
\end{array}
\right.
\]
and investigate the existence of limits as the discount parameter
$\delta\rightarrow0,$ respectively the time horizon $T\rightarrow\infty.$

In the context of sequences of real numbers, the first result connecting
asymptotic behavior of Abel and Ces\`{a}ro means goes back to Hardy and
Littlewood in \cite{Hardy_Littlewood_1914}. Their result has known several
generalizations: to uncontrolled deterministic dynamics in \cite[XIII.5]%
{feller_1971}, to controlled deterministic systems in \cite{Arisawa_98_1},
\cite{Arisawa_98_2}, etc.

Ergodic behavior of systems and asymptotic of Ces\`{a}ro-type averages have
made the object of several papers dealing with either deterministic or
stochastic control systems. The partial differential system approach
originating in \cite{lions_papanicolaou_varadhan} relies on coercitivity of
the associated Hamiltonian (see also \cite{artstein_gaitsgory_2000} for
explicit criteria). Although the method generalizes to deterministic (resp.
Brownian) control systems in \cite{Arisawa_98_2} (resp.
\cite{arisawa_lions_stoc_98}), the main drawback resides in the fact that, due
to the ergodic setting, the limit is independent of the initial condition of
the control system. Another approach to the asymptotic behavior relies on
estimations on trajectories available under controllability and dissipativity
assumptions. The reader is referred to \ \cite{artstein_gaitsgory_2000},
\cite{bettiol_2005} for the deterministic setting or \cite{Basak_Borkar_Ghosh}%
, \cite{borkar_gaitsgory_07}, \cite{buckdahn_ichihara_2005},
\cite{richou_2009} for Brownian systems. Although the method is different, it
presents the same drawback as the PDE one: it fails to give general limit
value functions that depend on the initial data.

In the context of piecewise deterministic Markov processes, the
infinite-horizon optimal control literature is quite extensive
(\cite{Davis_86}, \cite{Soner86_2}, \cite{Dempster_Ye_96},
\cite{Almudevar_2001}, \cite{Forwick_Schal_Schmitz_2004}, etc.). To our best
knowledge, average control problems have first been considered in an impulsive
control framework in \cite{Costa_89} and \cite{Gatarek_92}. The first papers
dealing with long time average costs in the framework of continuous control
policies are \cite{Costa_Doufour_2010} and \cite{Costa_Dufour_2009} (see also
\cite{CostaDufour_Book2013}). The problem studied in the latter papers is
somewhat different, since it concerns $\underset{u}{\inf}%
\underset{T\rightarrow\infty}{\text{ }\lim\sup}\frac{1}{T}\mathbb{E}\left[
\int_{0}^{T}h\left(  \gamma_{t}^{\gamma_{0},x_{0},u},X_{t}^{\gamma_{0}%
,x_{0},u},u_{t}\right)  dt\right]  ,$ thus leading to an inf/sup formulation,
while, in our case, we deal with a sup/inf formulation. Moreover, the methods
employed are substantially different. Our work should be regarded as a
complement to the studies developed in the cited papers.

A nonexpansivity condition has been employed in
\cite{quincampoix_renault_2011} in connection to deterministic control systems
allowing to obtain the existence of a general (uniform) limit value function.
This method has been (partially) extended to Brownian control systems in
\cite{G2}. In both these papers, convenient estimates on the trajectories in
finite horizon allow to prove the uniform continuity of Ces\`{a}ro averages
$V_{T}$ and an intuition coming from repeated games theory (inspired by
\cite{renault_2011}) gives the candidate for the limit value function. If the
convergence to this limit value function is uniform, the results of
\cite{oliubarton:hal-00661833} for deterministic systems yield the equivalence
between Abel and Ces\`{a}ro long-time averages. This latter assertion is still
valid for controlled Brownian diffusions (see \cite[Theorems 10 and 13]{G2})
and (to some extent) for piecewise deterministic Markov processes (see
\cite[Theorem 4.1]{GoreacSerea_TauberianPDMP_2014}).

In the present paper, we generalize the results of
\cite{quincampoix_renault_2011} and \cite{G2} to the framework of switch
piecewise deterministic Markov processes. The methods are based on viscosity
solutions arguments. We deal with two specific problems. The key point is, as
for Brownian systems, a uniform continuity of average value functions with
respect to the average parameter ($\delta$ or $T$). However, the approach in
\cite{G2} benefits from dynamic programming principles, which, within the
framework of PDMP, are easier obtained for Abel means (discounted functions
$v^{\delta}$). This is why, results like \cite[Proposition 7 and Theorem
8]{G2} are not directly applicable and we cannot make use of the already
mentioned intuition on repeated games. To overcome this problem, we proceed as
follows : if the system admits an invariant compact set, we prove the uniform
continuity of $\left(  v^{\delta}\right)  _{\delta>0}$ and use the results in
\cite[Theorem 4.1]{GoreacSerea_TauberianPDMP_2014} to show (in Theorem
\ref{ThUniformConvMain}) that this family admits a unique adherent point with
respect to the topology of continuous functions and, hence, it converges
uniformly. This implies the existence of $\underset{T\rightarrow\infty}{\lim
}V_{T}\left(  \gamma_{0},x_{0}\right)  $ and the limit is uniform with respect
to the initial data.

The second problem is proving the uniform continuity of $\left(  v^{\delta
}\right)  _{\delta>0}$ under explicit noxexpansivity conditions. In the
Brownian setting, this follows from estimates on the trajectories and a
natural coupling with respect to the same Brownian motion in \cite[Lemma
3]{G2}. For switch PDMP, we obtain similar results (in a convenient setting)
by using some reference random measure generated by the process. Although the
second marginal of this coupling might not come from a controlled PDMP, it is
shown to belong to a convenient class of measures by using linear programming
techniques (developed in \cite{G8}, \cite{G7} and inspired by Krylov
\cite{krylov_00}).

Let us now explain how the paper is organized. We begin with fixing some
notations employed throughout the paper (in Subsection \ref{Subsection2.1}).
We proceed by recalling the construction of controlled PDMP of switch type and
present the main assumptions on the characteristics in Subsection
\ref{Subsection2.2}. In Subsection \ref{Subsection2.3}, we introduce the
concept of invariance with respect to PDMP dynamics, the value functions
(Ces\`{a}ro and Abel averages) and the occupation measures associated to
controlled dynamics (taken from \cite{G7}). The main contributions of our
paper are stated in Section \ref{Section3Main}. We begin with introducing a
very general, yet abstract nonexpansivity condition in Subsection
\ref{Subsection3.1AbstractCondition}, Condition \ref{NonExp}. The first main
result of the paper (Theorem \ref{ThUniformConvMain}) is given in Subsection
\ref{Subsection3.2Main1}. This result states that whenever the nonexpansivity
Condition \ref{NonExp} is satisfied, there exists a unique limit value
function $\underset{T\rightarrow\infty}{\lim}V_{T}=\underset{\delta
\rightarrow0}{\lim}v^{\delta}$ independent of the average considered (Abel/
Ces\`{a}ro). In subsection \ref{Subsection3.3ExplicitNonexp}, we give explicit
nonexpansive conditions on the dynamics and the cost functional implying
Condition \ref{NonExp}. The second main result of the paper (Subsection
\ref{Subsection3.4Main2}, Theorem \ref{ThMainFirst}) provides an explicit
construction of (pseudo-)couplings. 

We proceed with a biological framework justifying our models in Section
\ref{Section4Examples}. We present the foundations of Hasty's model for Phage
$\lambda$ inspired by \cite{hasty_pradines_dolnik_collins_00} in Subsection
\ref{Subsection4.1}. In order to give a hint to our readers less familiarized
with mathematical models in systems biology, we briefly explain how a PDMP can
be associated to Hasty's genetic applet in Subsection \ref{Subsection4.2}.
Finally, the aim of Subsection \ref{Subsection4.3} is to give an extensive
choice of characteristics satisfying all the assumptions of the main Theorem
\ref{ThMainFirst}.

Section \ref{Section5ProofMain1} gives the proof of the first main result
(Theorem \ref{ThUniformConvMain}). First, we prove that Condition \ref{NonExp}
implies the equicontinuity of the family of Abel-average value functions
$\left(  v^{\delta}\right)  _{\delta>0}.$ Next, we recall the results in
\cite{GoreacSerea_TauberianPDMP_2014} on Abel-type theorems to conclude. The
results of this section work in all the generality of \cite{Soner86_2} (see
also \cite{G8}, \cite{G7}).

The proof of the second main result (Theorem \ref{ThMainFirst}) is given in
Section \ref{Section6ProofMain2}. The proof is based on constructing explicit
couplings satisfying Condition \ref{NonExp} and it relies on four steps. The
first step is showing that the value functions $v^{\delta}$ can be suitably
approximated by using piecewise constant open-loop policies. This is done in
Subsection \ref{Subsection6.2PiecewiseCstePolicies}. The proof combines the
approach in \cite{krylov_00} with the dynamic programming principles in
\cite{Soner86_2}. We think that neither the result, nor the method are
surprising but, for reader's sake, we have provided the key elements in the
Appendix. The second step is to interpret the system as a stochastic
differential equation (SDE) with respect to some random measure (Subsection
\ref{Subsection6.3RandomMeasures}). The third step (Subsection
\ref{Subsection6.4MeasureEmbedding}) is to embed the solutions of these SDE in
a space of measures satisfying a suitable linear constraint via the linear
programming approach. To conclude, the fourth step (given in Subsection
\ref{Subsection6.5Coupling}) provides a constructive (pseudo-) coupling using
SDE estimates.

\section{Preliminaries\label{Section2}}

\subsection{Notations\label{Subsection2.1}}

Throughout the paper, we will use the following notations.

Unless stated otherwise, the Euclidian spaces $%
\mathbb{R}
^{N},$ for some $N\geq1$ are endowed with the usual Euclidian inner product
$\left\langle x,y\right\rangle =\left(  y^{t}\right)  x,$ where $y^{t}$ stands
for the transposed, row vector and with the associated norm $\left\vert
x\right\vert =\sqrt{\left\langle x,x\right\rangle },$ for all $x,y\in%
\mathbb{R}
^{N}$.

For every $r>0,$ the set $\overline{B}\left(  0,r\right)  $ denotes the closed
$r$-radius ball of $%
\mathbb{R}
^{N}.$

The set $\mathbb{M}$ will denote some finite set. Whenever needed, the set
$\mathbb{M}$ is endowed with the discrete topology.

Unless stated otherwise, $\mathbb{U}$ is a compact metric space referred to as
the control space. We let $\mathcal{A}_{0}\left(  \mathbb{U}\right)  $ denote
the space of $\mathbb{U}$-valued Borel measurable functions defined on
$\mathbb{M\times}%
\mathbb{R}
^{N}\times%
\mathbb{R}
_{+}$. The sequence $u=\left(  u_{1},u_{2},...\right)  ,$ where $u_{k}%
\in\mathcal{A}_{0}\left(  \mathbb{U}\right)  ,$ for all $k\geq1$ is said to be
an admissible control. The class of such sequences is denoted by
$\mathcal{A}_{ad}\left(  \mathbb{U}\right)  $ (or simply $\mathcal{A}_{ad}$
whenever no confusion is at risk concerning $\mathbb{U}$). We introduce, for
every $n\geq1,$  the spaces of piecewise constant policies%
\[
\left.
\begin{array}
[c]{l}%
\mathcal{A}_{0}^{n}\left(  \mathbb{U}\right)  =\left\{  u\in\mathcal{A}%
_{0}\left(  \mathbb{U}\right)  :\text{ }u\left(  \gamma,x,t\right)
=u^{0}\left(  \gamma,x\right)  1_{\left\{  0\right\}  }\left(  t\right)  +%
{\textstyle\sum\limits_{k\geq0}}
u^{k}\left(  \gamma,x\right)  1_{\left(  \frac{k}{n},\frac{k+1}{n}\right]
}\left(  t\right)  \right\}  ,\\
\mathcal{A}_{ad}^{n}\left(  \mathbb{U}\right)  =\left\{  \left(  u_{m}\right)
_{m\geq1}\in\mathcal{A}_{ad}\left(  \mathbb{U}\right)  :\text{ }u_{m}%
\in\mathcal{A}_{0}^{n}\left(  \mathbb{U}\right)  ,\text{ }m\geq1\right\}  .
\end{array}
\right.
\]
As before, we may drop the dependency on $\mathbb{U}$.

For every bounded function $\varphi:\mathbb{M\times}%
\mathbb{R}
^{N}\times\mathbb{U}\longrightarrow%
\mathbb{R}
^{k},$ for some $N,k\geq1$ which is Lipschitz-continuous with respect to the $%
\mathbb{R}
^{N}$ component, we let
\begin{align*}
\varphi_{\max} &  =\sup_{\left(  \gamma,x,u\right)  \in\mathbb{M\times}%
\mathbb{R}
^{N}\times\mathbb{U}}\left\vert \varphi_{\gamma}\left(  x,u\right)
\right\vert ,\text{ }Lip\left(  \varphi\right)  =\sup_{\substack{\left(
\gamma,x,y,u\right)  \in\mathbb{M\times}%
\mathbb{R}
^{2N}\times\mathbb{U}\\x\neq y}},\frac{\left\vert \varphi_{\gamma}\left(
x,u\right)  -\varphi_{\gamma}\left(  y,u\right)  \right\vert }{\left\vert
x-y\right\vert },\\
\text{ }\left\vert \varphi\right\vert _{1} &  =\varphi_{\max}+Lip\left(
\varphi\right)  .
\end{align*}
This is the Lipschitz norm of $\varphi.$

Whenever $\mathbb{K}\subset%
\mathbb{R}
^{N}$ is a closed set, we denote by $C\left(  \mathbb{M\times K};%
\mathbb{R}
\right)  $ the set of continuous real-valued functions defined on
$\mathbb{M\times K}$. \ The set $BUC\left(  \mathbb{M\times K};%
\mathbb{R}
\right)  $ stands for the family of real-valued bounded, uniformly continuous
functions defined on $\mathbb{M\times K}$.

The real-valued function $\varphi:%
\mathbb{R}
^{N}\longrightarrow%
\mathbb{R}
$ is said to be of class $C_{b}^{1}$ if it has continuous, bounded,
first-order derivatives. The gradient of such functions is denoted by
$\partial_{x}\varphi$.

The real-valued function $\varphi:\mathbb{M\times}%
\mathbb{R}
^{N}\longrightarrow%
\mathbb{R}
$ is said to be of class $C_{b}^{1}$ if $\varphi\left(  \gamma,\cdot\right)  $
is of class $C_{b}^{1},$ for all $\gamma\in\mathbb{M}.$

Given a generic metric space $\mathbb{A}$, we let $\mathcal{B\left(
\mathbb{A}\right)  }$ denote the Borel subsets of $\mathbb{A}$. We also let
$\mathcal{P}\left(  \mathbb{A}\right)  $ denote the family of probability
measures on $\mathbb{A}$. The distance $W_{1}$ is the usual Wasserstein
distance on $\mathcal{P}\left(  \mathbb{A}\right)  $ and $W_{1,Hausdorff}$ is
the usual Pompeiu-Hausdorff distance between subsets of $\mathcal{P}\left(
\mathbb{A}\right)  $ constructed with respect to $W_{1}$.

For a generic real vector space $\mathbb{A},$ we let $\overline{co}$ denote
the closed convex hull operator.

\subsection{Construction of Controlled Piecewise Deterministic Processes of
Switch Type\label{Subsection2.2}}

Piecewise deterministic Markov processes have been introduced in
\cite{Davis_84} and extensively studied for the last thirty years in
connection to various phenomena in biology (see \cite{cook_gerber_tapscott_98}%
, \cite{crudu_debussche_radulescu_09}, \cite{Wainrib_Thieullen_Pakdaman_2010},
\cite{crudu_Debussche_Muller_Radulescu_2012}, \cite{G8}), reliability or
storage modelling (in \cite{Boxma_Kaspi_Kella_Perry_2005},
\cite{DufourDutuitGonzalesZhang2008}), finance (in \cite{Rolski_Schmidli_2009}%
), communication networks (\cite{graham2009}), etc. The optimal control of
these processes makes the object of several papers (e.g. \cite{Davis_86},
\cite{Soner86_2}, \cite{Costa_Dufour_12}, etc.). For reader's sake we will
briefly recall the construction of these processes, the assumptions, as well
as the type of controls we are going to employ throughout the paper.

The switch PDMP is constructed on a space $\left(  \Omega,\mathcal{F}%
,\mathbb{P}\right)  $ allowing to consider a sequence of independent, $\left[
0,1\right]  $ uniformly distributed random variables (e.g. the Hilbert cube
starting from $\left[  0,1\right]  $ endowed with its Lebesgue measurable sets
and the Lebesgue measure for coordinate, see \cite[Section 23]{davis_93}). We
consider a compact metric space $\mathbb{U}$ referred to as the control space.
The process is given by a couple $\left(  \gamma,X\right)  ,$ where $\gamma$
is the discrete mode component and takes its values in some finite set
$\mathbb{M}$ and the state component $X$ takes its values in some Euclidian
state space $%
\mathbb{R}
^{N}$ ($N\geq1$)$.$ The process is governed by a characteristic triple :

- a family of bounded, uniformly continuous vector fields $f_{\gamma}:%
\mathbb{R}
^{N}\times\mathbb{U}\longrightarrow%
\mathbb{R}
^{N}$ such that $\left\vert f_{\gamma}\left(  x,u\right)  -f_{\gamma}\left(
y,u\right)  \right\vert \leq C\left\vert x-y\right\vert ,$ for some $C>0$ and
all $x,y\in%
\mathbb{R}
^{N},$ $\gamma\in\mathbb{M}$ and all $u\in\mathbb{U}$,

- a family of bounded, uniformly continuous jump rates $\lambda_{\gamma}:%
\mathbb{R}
^{N}\times\mathbb{U}\longrightarrow%
\mathbb{R}
_{+}$ such that $\left\vert \lambda_{\gamma}\left(  x,u\right)  -\lambda
_{\gamma}\left(  y,u\right)  \right\vert \leq C\left\vert x-y\right\vert ,$
for some $C>0$ and all $x,y\in%
\mathbb{R}
^{N},$ $\gamma\in\mathbb{M}$ and all $u\in\mathbb{U}$,

- a transition measure $Q:\mathbb{M}\times%
\mathbb{R}
^{N}\times\mathbb{U}\longrightarrow\mathcal{P}\left(  \mathbb{M\times}%
\mathbb{R}
^{N}\right)  $. We assume that this transition measure has the particular
form
\[
Q\left(  \gamma,x,u,d\theta dy\right)  =\delta_{x+g_{\gamma}\left(
\theta,x,u\right)  }\left(  dy\right)  Q^{0}\left(  \gamma,u,d\theta\right)
,
\]
for all $\left(  \gamma,x,u\right)  \in\mathbb{M}\times%
\mathbb{R}
^{N}\times\mathbb{U}$. The bounded, uniformly continuous jump functions
$g_{\gamma}:\mathbb{M}\times%
\mathbb{R}
^{N}\times\mathbb{U}\longrightarrow%
\mathbb{R}
^{N}$ are such that $\left\vert g_{\gamma}\left(  \theta,x,u\right)
-g_{\gamma}\left(  \theta,y,u\right)  \right\vert \leq C\left\vert
x-y\right\vert ,$ for some $C>0$ and all $x,y\in%
\mathbb{R}
^{N},$ all $\left(  \theta,\gamma\right)  \in\mathbb{M}^{2}$ and all
$u\in\mathbb{U}$. The transition measure for the mode component is given by
$Q^{0}:\mathbb{M}\times\mathbb{U}\longrightarrow\mathcal{P}\left(
\mathbb{M}\right)  .$ For every $A\subset\mathbb{M},$ the function $\left(
\gamma,u\right)  \mapsto Q^{0}\left(  \gamma,u,A\right)  $ is assumed to be
measurable and, for every $\left(  \gamma,u\right)  \in\mathbb{M}%
\times\mathbb{U}$, $Q^{0}\left(  \gamma,u,\left\{  \gamma\right\}  \right)
=0.$

These assumptions are needed in order to guarantee smoothness of value
functions in this context (see also \cite{G8} for further comments). Of
course, more general transition measures $Q$ can be considered under the
assumptions of \cite{G8}, \cite{G7} and the results of Subsection
\ref{Subsection3.2Main1} still hold true. However, the approach in Section
\ref{Section6ProofMain2} only holds true for these particular dynamics and it
is the reason why we have chosen to work under these conditions. Whenever
$u\in\mathcal{A}_{0}\left(  \mathbb{U}\right)  $ and $\left(  t_{0},\gamma
_{0},x_{0}\right)  \in%
\mathbb{R}
_{+}\times\mathbb{M\times}%
\mathbb{R}
^{N},$ we consider the ordinary differential equation%
\[
\left\{
\begin{array}
[c]{l}%
d\Phi_{t}^{t_{0},x_{0},u;\gamma_{0}}=f_{\gamma_{0}}\left(  \Phi_{t}%
^{t_{0},x_{0},u;\gamma_{0}},u\left(  \gamma_{0},x_{0},t-t_{0}\right)  \right)
dt,\text{ }t\geq t_{0},\\
\Phi_{t_{0}}^{t_{0},x_{0},u;\gamma_{0}}=x_{0.}%
\end{array}
\right.
\]

Given some sequence $u:=\left(  u_{1},u_{2},...\right)  \subset\mathcal{A}%
_{0}\left(  \mathbb{U}\right)  ,$ the first jump time $T_{1}$ has a jump rate
$\lambda_{\gamma_{0}}\left(  \Phi_{t}^{0,x_{0},u_{1};\gamma_{0}},u_{1}\left(
\gamma_{0},x_{0},t\right)  \right)  $, i.e. $\mathbb{P}\left(  T_{1}\geq
t\right)  =\exp\left(  -\int_{0}^{t}\lambda_{\gamma_{0}}\left(  \Phi
_{s}^{0,x_{0},u_{1};\gamma_{0}},u_{1}\left(  \gamma_{0},x_{0},s\right)
\right)  ds\right)  .$ The controlled PDMP is defined by setting $\left(
\Gamma_{t}^{\gamma_{0},x_{0},u},X_{t}^{\gamma_{0},x_{0},u}\right)  =\left(
\gamma_{0},\Phi_{t}^{0,x_{0},u_{1};\gamma_{0}}\right)  ,$ if $t\in\left[
0,T_{1}\right)  .$ The post-jump location $\left(  \Upsilon_{1},Y_{1}\right)
$ has $Q\left(  \gamma_{0},\Phi_{\tau}^{0,x_{0},u_{1};\gamma_{0}},u_{1}\left(
\gamma_{0},x_{0},\tau\right)  ,\cdot\right)  $ as conditional distribution
given $T_{1}=\tau.$ Starting from $\left(  \Upsilon_{1},Y_{1}\right)  $ at
time $T_{1}$, we select the inter-jump time $T_{2}-T_{1}$ such that
\[
\mathbb{P}\left(  T_{2}-T_{1}\geq t\text{ }/\text{ }T_{1},\Upsilon_{1}%
,Y_{1}\right)  =\exp\left(  -\int_{T_{1}}^{T_{1}+t}\lambda_{\Upsilon_{1}%
}\left(  \Phi_{s}^{T_{1},Y_{1},u_{2};\Upsilon_{1}},u_{2}\left(  \Upsilon
_{1},Y_{1},s-T_{1}\right)  \right)  ds\right)  .
\]
We set $\left(  \Gamma_{t}^{\gamma_{0},x_{0},u},X_{t}^{\gamma_{0},x_{0}%
,u}\right)  =\left(  \Upsilon_{1},\Phi_{t}^{T_{1},Y_{1},u_{2};\Upsilon_{1}%
}\right)  ,$ if $t\in\left[  T_{1},T_{2}\right)  .$ The post-jump location
$\left(  \Upsilon_{2},Y_{2}\right)  $ satisfies%
\[
\mathbb{P}\left(  \left(  \Upsilon_{2},Y_{2}\right)  \in A\text{ }/\text{
}T_{2},T_{1},\Upsilon_{1},Y_{1}\right)  =Q\left(  \Upsilon_{1},\Phi_{T_{2}%
}^{T_{1},Y_{1},u_{2};\Upsilon_{1}},u_{2}\left(  \Upsilon_{1},Y_{1},T_{2}%
-T_{1}\right)  ,A\right)  ,
\]
for all Borel set $A\subset\mathbb{M\times}%
\mathbb{R}
^{N}.$ And so on. For simplicity purposes, we set $\left(  T_{0},\Upsilon
_{0},Y_{0}\right)  =\left(  0,\gamma_{0},x_{0}\right)  $.

\subsection{Definitions\label{Subsection2.3}}

Before stating the main assumptions and results of our paper we will need to
recall some concepts : invariance with respect to PDMP dynamics, the Abel and
Ces\`{a}ro value functions and the embedding of trajectories into occupation measures.

\subsubsection{Invariance\label{2.3.Invariance} }

In order to get convenient estimates on the trajectories, we assume, whenever
necessary, that the switch system admits some invariant compact set
$\mathbb{K}.$ For the applications we have in mind, this is not a drawback
since, for biological systems, we deal either with discrete components or with
normalized concentrations (hence not exceeding given limits).\ We recall the
notion of invariance.

\begin{definition}
The closed set $\mathbb{K}$ is said to be invariant with respect to the
controlled PDMP with characteristics $\left(  f,\lambda,Q\right)  $ if, for
every $\left(  \gamma,x\right)  \in\mathbb{M\times K}$ and every
$u\in\mathcal{A}_{ad},$ one has $X_{t}^{\gamma,x,u}\in\mathbb{K},$ for all
$t\geq0,$ $\mathbb{P-}a.s.$
\end{definition}

Explicit geometric conditions on the coefficients and the normal cone to
$\mathbb{K}$ equivalent to the property of invariance are given in
\cite[Theorem 2.8]{G8}. Roughly speaking, these properties are derived from
the sub/superjet formulation of the condition on $d_{\mathbb{K}}$ being a
viscosity supersolution of some associated Hamilton-Jacobi integrodifferential
system. This invariance condition is natural even for purely deterministic
nonexpansive systems (cf. \cite{quincampoix_renault_2011}). It can be avoided
either by localization procedures or by imposing some relative compactness on
reachable sets (or, equivalently, occupation measures). We prefer to work
under this condition in order to focus on specific details of our method,
rather than localization technicalities.

\subsubsection{Value Functions\label{2.3.Value}}

We investigate the asymptotic behavior of discounted value functions (also
known as Abel-averages)
\begin{align*}
v^{\delta}\left(  \gamma,x\right)   &  :=\inf_{u\in\mathcal{A}_{ad}}%
\delta\mathbb{E}\left[  \int_{0}^{\infty}e^{-\delta t}h\left(  \Gamma
_{t}^{\gamma,x,u},X_{t}^{\gamma,x,u}\right)  dt\right]  \\
&  =\inf_{u=\left(  u_{n}\right)  _{n\geq1}\in\mathcal{A}_{ad}}\delta
\mathbb{E}\left[  \sum_{n\geq1}\int_{T_{n-1}}^{T_{n}}e^{-\delta t}h\left(
\Gamma_{t}^{\gamma,x,u},X_{t}^{\gamma,x,u},u_{n}\left(  \Gamma_{T_{n-1}%
}^{\gamma,x,u},X_{T_{n-1}}^{\gamma,x,u},t-T_{n-1}\right)  \right)  dt\right]
,
\end{align*}
$\gamma\in\mathbb{M},$ $x\in%
\mathbb{R}
^{N},$ $\delta>0,$ as the discount parameter $\delta\rightarrow0$ and
Ces\`{a}ro-average values%
\begin{align*}
V_{t}\left(  \gamma,x\right)   &  :=\inf_{u\in\mathcal{A}_{ad}}\frac{1}%
{t}\mathbb{E}\left[  \int_{0}^{t}h\left(  \Gamma_{s}^{\gamma,x,u}%
,X_{s}^{\gamma,x,u},u_{s}\right)  ds\right]  \\
&  =\inf_{u=\left(  u_{n}\right)  _{n\geq1}\in\mathcal{A}_{ad}}\frac{1}%
{t}\mathbb{E}\left[  \sum_{n\geq1}\int_{T_{n-1}\wedge t}^{T_{n}\wedge
t}h\left(  \Gamma_{s}^{\gamma,x,u},X_{s}^{\gamma,x,u},u_{n}\left(
\Gamma_{T_{n-1}}^{\gamma,x,u},X_{T_{n-1}}^{\gamma,x,u},s-T_{n-1}\right)
\right)  ds\right]  ,
\end{align*}
$\gamma\in\mathbb{M},$ $x\in%
\mathbb{R}
^{N},$ $t>0,$ as the time horizon $t\rightarrow\infty$. The cost function
$\ h:\mathbb{M}\times%
\mathbb{R}
^{N}\times\mathbb{U}\longrightarrow%
\mathbb{R}
$ is assumed to be bounded, uniformly continuous and Lipschitz continuous
w.r.t. the state component, uniformly in control and mode (i.e. $\left\vert
h\left(  \gamma,x,u\right)  -h\left(  \gamma,y,u\right)  \right\vert \leq
C\left\vert x-y\right\vert ,$ for some $C>0$ and all $\gamma\in\mathbb{M},$
$x,y\in%
\mathbb{R}
^{N}$ and all $u\in\mathbb{U}$).

\subsubsection{The Infinitesimal Generator and Occupation
Measures\label{2.3.Occupation}}

We recall that, for regular functions $\varphi$ (for example of class
$C_{b}^{1}$), the generator of the control \ process is given by
\[
\mathcal{L}^{u}\varphi\left(  \gamma,x\right)  =\left\langle f_{\gamma}\left(
x,u\right)  ,\partial_{x}\varphi\left(  \gamma,x\right)  \right\rangle
+\lambda_{\gamma}\left(  x,u\right)  \int_{\mathbb{M}}\left(  \varphi\left(
\theta,x+g_{\gamma}\left(  \theta,x,u\right)  \right)  -\varphi\left(
\gamma,x\right)  \right)  Q^{0}\left(  \gamma,u,d\theta\right)  ,
\]
for all $\left(  \gamma,x\right)  \in\mathbb{M\times}\mathcal{%
\mathbb{R}
}^{N}$ and any $u\in\mathcal{A}_{ad}\left(  \mathbb{U}\right)  $. A complete
description of the domain of this operator can be found, for instance, in
\cite[Theorem 26.14]{davis_93}.

To any $\left(  \gamma,x\right)  \in\mathbb{M\times}\mathcal{%
\mathbb{R}
}^{N}$ and any $u\in\mathcal{A}_{ad}\left(  \mathbb{U}\right)  $, we associate
the discounted occupation measure%
\begin{equation}
\mu_{\gamma,x,u}^{\delta}\left(  A\right)  =\delta\mathbb{E}\left[  \int%
_{0}^{\infty}e^{-\delta t}1_{A}\left(  \Gamma_{t}^{\gamma,x,u},X_{t}%
^{\gamma,x,u},u_{t}\right)  dt\right]  ,\label{gammauC}%
\end{equation}
for all Borel subsets $A\subset\mathbb{M\times}\mathcal{%
\mathbb{R}
}^{N}\times\mathbb{U}.$ The set of all discounted occupation measures is
denoted by $\Theta_{0}^{\delta}\left(  \gamma,x\right)  .$ We also define
\begin{equation}
\Theta^{\delta}\left(  \gamma,x\right)  =\left\{
\begin{array}
[c]{c}%
\mu\in\mathcal{P}\left(  \mathbb{M\times}\mathcal{%
\mathbb{R}
}^{N}\times\mathbb{U}\right)  \text{ }s.t.\text{ }\forall\phi
:\mathbb{M\longrightarrow}C_{b}^{1}\left(
\mathbb{R}
^{N}\right)  ,\\
\int_{\mathbb{M\times}\mathcal{%
\mathbb{R}
}^{N}\times\mathbb{U}}\left(  \mathcal{L}^{u}\phi\left(  \theta,y\right)
+\delta\left(  \phi(\gamma,x)-\phi\left(  \theta,y\right)  \right)  \right)
\mu\left(  d\theta,dy,du\right)  =0
\end{array}
\right\}  .\label{Thetadelta}%
\end{equation}
Links between $\Theta_{0}^{\delta}\left(  \gamma,x\right)  $ and
$\Theta^{\delta}\left(  \gamma,x\right)  $ will be given in Theorem
\ref{ThLP}. For further details, the reader is referred to \cite{G7}.

\section{Assumptions and Main Results\label{Section3Main}}

In this section, we present the main assumptions and results of our paper.

We begin with giving an abstract nonexpansivity condition under which the Abel
means $\left(  v^{\delta}\right)  _{\delta>0}$ and the Ces\`{a}ro means
$\left(  V_{t}\right)  _{t>0}$ converge uniformly and to a common limit. It is
a very general one and, in a less general form, it reads "a coupling $\mu$ can
be found between a fixed controlled trajectory starting from $x$ and another
one starting from $y$ such that the difference of costs evaluated on the two
trajectories be controlled by the distance $\left\vert x-y\right\vert $". This
is essential in proving ergodic behavior. In the uncontrolled dissipative case
(see, for example \cite{Benaim_LEBorgne_Malrieu_Zitt_2012}), the couplings are
such that the distance between the law of $X_{t}^{x}$ and the one of
$X_{t}^{y}$ decreases exponentially (is upper-bounded by some term
$e^{-ct}\left\vert x-y\right\vert $)$.$ In particular, this implies that
$\underset{\delta\rightarrow0}{\lim}v^{\delta}\left(  x\right)  $ is constant
(independent of $x$). Unlike the classical dissipative approach, our framework
allows the limit to depend on the initial data. The first main result of the
paper states that, under the nonexpansivity Condition \ref{NonExp}, the Abel
means $\left(  v^{\delta}\right)  _{\delta>0}$ and the Ces\`{a}ro means
$\left(  V_{t}\right)  _{t>0}$ converge uniformly and to a common limit.

The main drawback of this Condition \ref{NonExp} is that it is abstract
(theoretical). In a setting inspired by gene networks, we give an explicit
condition (Condition \ref{NonExpCondition}) on the characteristics of the PDMP
implying the abstract nonexpansivity condition. The second main result of the
paper allows to link the explicit Condition \ref{NonExpCondition} and the
abstract one given before. This is done by constructing suitable
(pseudo-)couplings and the proof requires several steps. 

\subsection{An Abstract Nonexpansivity
Condition\label{Subsection3.1AbstractCondition}}

Throughout the section, we assume the following nonexpansivity condition

\begin{condition}
\label{NonExp}For every $\delta>0,$ every $\varepsilon>0,$ every $\left(
\gamma,x,y\right)  \in\mathbb{M}\times%
\mathbb{R}
^{2N}$ and every $u\in\mathcal{A}_{ad}\left(  \mathbb{U}\right)  ,$ there
exists $\mu\in\mathcal{P}\left(  \left(  \mathbb{M}\times%
\mathbb{R}
^{N}\times\mathbb{U}\right)  ^{2}\right)  $ such that
\[
\left.
\begin{array}
[c]{l}%
i.\text{ \ \ }\mu\left(  \cdot,\mathbb{M}\times%
\mathbb{R}
^{N}\times\mathbb{U}\right)  =\mu_{\gamma,x,u}^{\delta}\in\Theta_{0}^{\delta
}\left(  \gamma,x\right)  ;\\
ii.\text{ \ }\mu\left(  \mathbb{M}\times%
\mathbb{R}
^{N}\times\mathbb{U},\cdot\right)  \in\Theta^{\delta}\left(  \gamma,y\right)
;\\
iii.\text{ }\underset{\left(  \mathbb{M}\times%
\mathbb{R}
^{N}\times\mathbb{U}\right)  ^{2}}{\int}\left\vert h\left(  \theta,z,w\right)
-h\left(  \theta^{\prime},z^{\prime},w^{\prime}\right)  \right\vert \mu\left(
d\theta,dz,dw,d\theta^{\prime},dz^{\prime},dw^{\prime}\right)  \leq Lip\left(
h\right)  \left\vert x-y\right\vert +\varepsilon.
\end{array}
\right.
\]

\end{condition}

\begin{remark}
Whenever $h$ only depends on the $x$ component (but not on the mode $\gamma,$
nor on the control $u$), one can impose
\[
W_{1}\left(  \widetilde{\Theta}_{0}^{\delta}\left(  \gamma,x\right)
,\widetilde{\Theta}^{\delta}\left(  \gamma,y\right)  \right)  \leq\left\vert
x-y\right\vert ,
\]
where $\widetilde{\Theta}_{0}$ (resp. $\widetilde{\Theta}$) denote the
marginals $\mu\left(  \mathbb{M},\cdot,\mathbb{U}\right)  $ of measures
$\mu\in$ $\Theta_{0}$ (resp. $\Theta)$. One can impose the slightly stronger
conditions
\[
W_{1}\left(  \widetilde{\Theta}_{0}^{\delta}\left(  \gamma,x\right)
,\widetilde{\Theta}_{0}^{\delta}\left(  \gamma,y\right)  \right)
\leq\left\vert x-y\right\vert \text{ or }W_{1,Hausdorff}\left(
\widetilde{\Theta}^{\delta}\left(  \gamma,x\right)  ,\widetilde{\Theta
}^{\delta}\left(  \gamma,y\right)  \right)  \leq\left\vert x-y\right\vert
\]
and the notion of nonexpansivity is transparent in this setting.
\end{remark}

\subsection{First Main Result (Existence of Limit Values and Abel-Tauberian
Results)\label{Subsection3.2Main1}}

The first main result of the paper states that, under the nonexpansivity
Condition \ref{NonExp}, the Abel means $\left(  v^{\delta}\right)  _{\delta
>0}$ and the Ces\`{a}ro means $\left(  V_{t}\right)  _{t>0}$ converge
uniformly and to a common limit.

\begin{theorem}
\label{ThUniformConvMain}Let us assume that there exists a compact set
$\mathbb{K}\subset%
\mathbb{R}
^{N}$ invariant with respect to the piecewise deterministic dynamics.
Moreover, we assume Condition \ref{NonExp} to hold true for every $\left(
\gamma,x,y\right)  \in\mathbb{M}\times\mathbb{K}^{2}$. Then, $\left(
v^{\delta}\right)  _{\delta>0}$ admits a unique limit $v^{\ast}\in C\left(
\mathbb{M\times K};%
\mathbb{R}
\right)  $ and $\left(  V_{t}\right)  _{t>0}$ converges to $v^{\ast}$ uniformly.
\end{theorem}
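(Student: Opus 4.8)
The plan is to establish the claimed convergence in two stages. First I would prove that the family $\left(v^{\delta}\right)_{\delta>0}$ is relatively compact in $C\left(\mathbb{M\times K};\mathbb{R}\right)$ by showing it is uniformly bounded and equicontinuous; the boundedness is immediate since $h$ is bounded (so $\left\vert v^{\delta}\right\vert\leq h_{\max}$ for all $\delta$), and the equicontinuity is exactly where Condition \ref{NonExp} enters. Second, I would identify the unique adherent point by invoking the Abel--Tauberian machinery of \cite[Theorem 4.1]{GoreacSerea_TauberianPDMP_2014}, which ties the behavior of $\left(v^{\delta}\right)$ as $\delta\rightarrow0$ to that of $\left(V_{t}\right)$ as $t\rightarrow\infty$, and then conclude uniform convergence from compactness plus uniqueness.

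For the equicontinuity, fix $\left(\gamma,x,y\right)\in\mathbb{M}\times\mathbb{K}^{2}$ and $\delta>0$, and let $\varepsilon>0$. The idea is to bound $v^{\delta}\left(\gamma,x\right)-v^{\delta}\left(\gamma,y\right)$ using the coupling $\mu$ furnished by Condition \ref{NonExp}. Given any admissible $u$, item $i.$ says the first marginal of $\mu$ realizes the discounted occupation measure $\mu_{\gamma,x,u}^{\delta}$, so that $\delta\mathbb{E}\left[\int_{0}^{\infty}e^{-\delta t}h\,dt\right]=\int h\left(\theta,z,w\right)\mu\left(d\theta,dz,dw,\cdot\right)$; item $ii.$ says the second marginal lies in the linear-programming class $\Theta^{\delta}\left(\gamma,y\right)$, which by the link in Theorem \ref{ThLP} is (the closed convex hull of, or otherwise controls) the set of genuine discounted occupation measures from $\left(\gamma,y\right)$, so that $\int h\left(\theta',z',w'\right)\mu\left(\cdot,d\theta',dz',dw'\right)\geq v^{\delta}\left(\gamma,y\right)$. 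Subtracting and applying item $iii.$ gives
\[
\int h\,d\mu_{\gamma,x,u}^{\delta}-v^{\delta}\left(\gamma,y\right)\leq\int\left\vert h\left(\theta,z,w\right)-h\left(\theta',z',w'\right)\right\vert\,d\mu\leq Lip\left(h\right)\left\vert x-y\right\vert+\varepsilon.
\]
Taking the infimum over $u$ on the left yields $v^{\delta}\left(\gamma,x\right)-v^{\delta}\left(\gamma,y\right)\leq Lip\left(h\right)\left\vert x-y\right\vert+\varepsilon$, and since $\varepsilon$ is arbitrary and the roles of $x$ and $y$ are symmetric, $\left(v^{\delta}\right)_{\delta>0}$ is uniformly $Lip\left(h\right)$-Lipschitz on $\mathbb{M\times K}$, hence equicontinuous. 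Invariance of $\mathbb{K}$ is what guarantees all trajectories (and thus the supports of the occupation measures) remain in the compact set, so that these comparisons take place over a compact domain and Arzel\`a--Ascoli applies.

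With relative compactness in hand, any subsequence of $\left(v^{\delta}\right)$ has a uniformly convergent sub-subsequence; I would then show every such limit point must coincide. This is precisely the role of the Tauberian result: the equicontinuity of $\left(v^{\delta}\right)$ together with the general Abel--Tauberian theorem of \cite{GoreacSerea_TauberianPDMP_2014} forces $\underset{\delta\rightarrow0}{\lim}v^{\delta}$ and $\underset{t\rightarrow\infty}{\lim}V_{t}$ to exist, be uniform, and agree; uniqueness of the adherent point follows, so the full family converges to a single $v^{\ast}\in C\left(\mathbb{M\times K};\mathbb{R}\right)$ and $\left(V_{t}\right)$ converges to the same $v^{\ast}$ uniformly. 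I expect the main obstacle to be the correct invocation of Theorem \ref{ThLP} in the second marginal step: one must verify that membership of $\mu\left(\mathbb{M}\times\mathbb{R}^{N}\times\mathbb{U},\cdot\right)$ in $\Theta^{\delta}\left(\gamma,y\right)$ genuinely bounds the integral from below by $v^{\delta}\left(\gamma,y\right)$, i.e. that the LP-relaxed occupation-measure set does not produce a strictly smaller infimum than the true control problem. Establishing that the value over $\Theta^{\delta}\left(\gamma,y\right)$ equals $v^{\delta}\left(\gamma,y\right)$ (rather than merely bounding it) is the technical crux and is exactly what the linear programming duality of \cite{G7} should supply.
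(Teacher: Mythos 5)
Your overall architecture is exactly the paper's: your first stage is the paper's Proposition \ref{PropUniformContModuli} (equicontinuity of $\left(v^{\delta}\right)_{\delta>0}$ from Condition \ref{NonExp} via Theorem \ref{ThLP}), and your second stage is the paper's combination of Lemma \ref{EssentialLemma} (i)--(iii) from \cite{GoreacSerea_TauberianPDMP_2014} to force a unique adherent point and then uniform convergence of $\left(V_{t}\right)_{t>0}$. You also correctly identified the crux: that $v^{\delta}\left(\gamma,y\right)=\inf_{\mu\in\Theta^{\delta}\left(\gamma,y\right)}\int h\,d\mu$, i.e. the LP relaxation does not produce a strictly smaller value — this is precisely Theorem \ref{ThLP}(i), taken from \cite{G7}, which the paper invokes verbatim. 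However, your central display is stated with the inequality in the wrong direction, and as written it is false: you claim, for \emph{every} admissible $u$, that $\int h\,d\mu_{\gamma,x,u}^{\delta}-v^{\delta}\left(\gamma,y\right)\leq Lip\left(h\right)\left\vert x-y\right\vert+\varepsilon$. Taking $x=y$, this would assert that every control is $\varepsilon$-optimal from $\left(\gamma,x\right)$, which is absurd. The trouble is that the second marginal's integral is only \emph{lower}-bounded by $v^{\delta}\left(\gamma,y\right)$, so subtraction goes the other way: what items (i)--(iii) actually give is
\[
v^{\delta}\left(\gamma,y\right)\leq\int h\left(\theta^{\prime},z^{\prime},w^{\prime}\right)\mu\left(\mathbb{M}\times\mathbb{R}^{N}\times\mathbb{U},d\theta^{\prime},dz^{\prime},dw^{\prime}\right)\leq\int h\,d\mu_{\gamma,x,u}^{\delta}+Lip\left(h\right)\left\vert x-y\right\vert+\varepsilon,
\]
for every $u$. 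Taking the infimum over $u$ on the \emph{right} (or, as the paper does, fixing at the outset a $u$ that is $\tfrac{\varepsilon}{2}$-optimal for $\left(\gamma,x\right)$) yields $v^{\delta}\left(\gamma,y\right)-v^{\delta}\left(\gamma,x\right)\leq Lip\left(h\right)\left\vert x-y\right\vert+\varepsilon$, and the symmetry of Condition \ref{NonExp} in $\left(x,y\right)$ — which you invoke anyway — gives the two-sided Lipschitz estimate. So the step as displayed would fail, but the repair is immediate with the ingredients you already have.

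A smaller imprecision in your second stage: Lemma \ref{EssentialLemma}(ii) alone does not ``force'' $\lim_{\delta\rightarrow0}v^{\delta}$ to exist; it only yields, for each adherent point $w$ of the relatively compact family, a sequence $t_{n}\rightarrow\infty$ along which $V_{t_{n}}\rightarrow w$ uniformly (the paper's Remark after the Lemma stresses that the Tauberian converse is only partial, the times $t_{n}$ depending on the sequence $\delta_{m}$). Uniqueness of the adherent point comes from combining the lower bound of part (i), applied to the pointwise $\limsup$ function $v^{\ast}$, with part (ii), as in the paper's chain (\ref{eq0})--(\ref{eq2}): any adherent point $w$ satisfies $w\geq\underset{t\rightarrow\infty}{\lim\inf}\,V_{t}\geq v^{\ast}\geq w$. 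Only once the full family $\left(v^{\delta}\right)_{\delta>0}$ is known to converge does part (iii) deliver the uniform convergence of $\left(V_{t}\right)_{t>0}$. Your plan reaches the right conclusion, but this intermediate mechanism needs to be spelled out rather than attributed wholesale to the Tauberian theorem.
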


The proof is postponed to Section \ref{Section5ProofMain1}. In order to prove
Theorem \ref{ThUniformConvMain}, we proceed as follows. First, we prove that
Condition \ref{NonExp} implies the equicontinuity of the family of
Abel-average value functions $\left(  v^{\delta}\right)  _{\delta>0}.$ Next,
we recall the results in \cite{GoreacSerea_TauberianPDMP_2014} on Abel-type
theorems to conclude.

\subsection{An Explicit Nonexpansive
Framework\label{Subsection3.3ExplicitNonexp}}

For the remaining of the section, we assume that the control is given by a
couple $\left(  u,v\right)  \in\mathbb{U}:=U\times V$ acting as follows : the
jump rate (and the measure $Q^{0}$ giving the new mode) only depend on the
mode component and is controlled by the parameter $u.$ The component $X$ is
controlled both by $u$ and by $v$ and it behaves as in the general case. One
has a vector field $f:\mathbb{M}\times%
\mathbb{R}
^{N}\times U\times V\longrightarrow%
\mathbb{R}
^{N}$, a jump rate $\lambda:\mathbb{M}\times%
\mathbb{R}
^{N}\times U\times V\rightarrow%
\mathbb{R}
_{+}$ given by
\[
\lambda_{\gamma}\left(  x,u,v\right)  =\lambda_{\gamma}\left(  u\right)  ,
\]
and the transition measure $Q:\mathbb{M}\times%
\mathbb{R}
^{N}\times U\times V\rightarrow\mathcal{P}\left(  \mathbb{M}\times%
\mathbb{R}
^{N}\right)  $ having the particular form%
\[
Q\left(  \left(  \gamma,x\right)  ,u,v,d\theta dy\right)  =\delta
_{x+g_{\gamma}\left(  \theta,x,u,v\right)  }\left(  dy\right)  Q^{0}\left(
\gamma,u,d\theta\right)  ,
\]
where $Q^{0}$ governs the post-jump position of the mode component. In this
case, the extended generator of $\left(  \gamma,X\right)  $ has the form%
\begin{equation}
\mathcal{L}^{u,v}\varphi\left(  \gamma,x\right)  =\left\langle f_{\gamma
}\left(  x,u,v\right)  ,\partial_{x}\varphi\left(  \gamma,x\right)
\right\rangle +\lambda\left(  \gamma,u\right)  \int_{\mathbb{M}}\left(
\varphi\left(  \theta,x+g_{\gamma}\left(  \theta,x,u,v\right)  \right)
-\varphi\left(  \gamma,x\right)  \right)  Q^{0}\left(  \gamma,u,d\theta
\right)  .\label{Luv}%
\end{equation}

We will show that the results on convergence of the discounted value functions
hold true under the following explicit condition on the dynamics.

\begin{condition}
\label{NonExpCondition}For every $\gamma\in\mathbb{M}$, every $u\in U$ and
every $x,y\in%
\mathbb{R}
^{N},$ the following holds true%
\[
\sup_{v\in V}\inf_{w\in V}\max\left\{
\begin{array}
[c]{l}%
\text{ \ \ \ \ }\left\langle f_{\gamma}\left(  x,u,v\right)  -f_{\gamma
}\left(  y,u,w\right)  ,x-y\right\rangle ,\\
\underset{\theta\in\mathbb{M}}{\sup}\left\vert x+g_{\gamma}\left(
\theta,x,u,v\right)  -y-g_{\gamma}\left(  \theta,y,u,w\right)  \right\vert
-\left\vert x-y\right\vert ,\\
\text{ \ \ \ \ }\left\vert h\left(  \gamma,x,u,v\right)  -h\left(
\gamma,y,u,w\right)  \right\vert -Lip\left(  h\right)  \left\vert
x-y\right\vert
\end{array}
\right\}  \leq0.
\]

\end{condition}

\begin{remark}
1. Whenever $h$ does not depend on the control, the latter condition naturally
follows from the Lipschitz-continuity of $h$.

2. If, moreover, the post-jump position is given by a (state and control free)
translation $x\mapsto x+g_{\gamma}\left(  \theta\right)  ,$ this condition is
the usual deterministic nonexpansive one (i.e.
\[
\sup_{v\in V}\inf_{w\in V}\left\langle f_{\gamma}\left(  x,u,v\right)
-f_{\gamma}\left(  y,u,w\right)  ,x-y\right\rangle \leq0\text{ ).}%
\]
This kind of jump (up to a slight modification guaranteeing that protein
concentrations do not become negative) fits the general theory described in
\cite{crudu_debussche_radulescu_09}.
\end{remark}

\subsection{Second Main Result (Explicit Coupling)\label{Subsection3.4Main2}}

The second main result of the paper allows to link the explicit Condition
\ref{NonExpCondition} and the abstract Condition \ref{NonExp} in the framework
described in Subsection \ref{Subsection3.3ExplicitNonexp}.

\begin{theorem}
\label{ThMainFirst} We assume Condition \ref{NonExpCondition} to hold true.
Moreover, we assume that there exists a compact set $\mathbb{K}$ invariant
with respect to the PDMP governed by $\left(  f,\lambda,Q\right)  $. Then the
conclusion of Theorem \ref{ThUniformConvMain} holds true (i.e. the family
$\left(  v^{\delta}\right)  _{\delta>0}$ admits a unique limit $v^{\ast}\in
C\left(  \mathbb{M\times K};%
\mathbb{R}
\right)  $ and $\left(  V_{t}\right)  _{t>0}$ converges to $v^{\ast}$ uniformly).
\end{theorem}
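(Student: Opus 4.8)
The plan is to deduce Theorem \ref{ThMainFirst} from Theorem \ref{ThUniformConvMain} by verifying that the explicit Condition \ref{NonExpCondition} forces the abstract Condition \ref{NonExp} on $\mathbb{M}\times\mathbb{K}^2$; once that implication is in hand, invariance of $\mathbb{K}$ together with Theorem \ref{ThUniformConvMain} gives the conclusion verbatim. All the effort therefore goes into producing, for fixed $\delta>0$, $\varepsilon>0$, $(\gamma,x,y)\in\mathbb{M}\times\mathbb{K}^2$ and an arbitrary admissible reference control with $U$-component $u$ and $V$-component $v$ driving a trajectory from $x$, a pseudo-coupling $\mu\in\mathcal{P}\bigl((\mathbb{M}\times\mathbb{R}^N\times\mathbb{U})^2\bigr)$ whose first marginal is the genuine discounted occupation measure $\mu^\delta_{\gamma,x,(u,v)}$ of \eqref{gammauC} (so that (i) holds), whose second marginal lies in the linear-programming class $\Theta^\delta(\gamma,y)$ of \eqref{Thetadelta} (condition (ii)), and which controls the averaged cost gap by $Lip(h)\lvert x-y\rvert+\varepsilon$ (condition (iii)).

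The coupling itself rests on the structural hypothesis that the rate $\lambda_\gamma(u)$ and the mode transition $Q^0(\gamma,u,\cdot)$ depend only on the mode and on the $U$-component. I would therefore run the $y$-trajectory $(\Gamma_t,Y_t)$ with \emph{the same} mode process $\Gamma_t$, the same jump times and the same $U$-component $u_t$ as the reference $x$-trajectory, and respond to the played $V$-component $v_t$ by a control $w_t\in V$ selected, at each instant, so as to realize the inner infimum of Condition \ref{NonExpCondition} at the current configuration $(\Gamma_t,X_t,Y_t,v_t)$. The three lines of that condition are tuned to exactly this coupling: the first gives $\frac{d}{dt}\lvert X_t-Y_t\rvert^2=2\langle f_{\Gamma_t}(X_t,u_t,v_t)-f_{\Gamma_t}(Y_t,u_t,w_t),X_t-Y_t\rangle\le 0$ between jumps; the second gives $\lvert X_{T_n}-Y_{T_n}\rvert\le\lvert X_{T_n^-}-Y_{T_n^-}\rvert$ at each common jump time, regardless of the post-jump mode; together they force $\lvert X_t-Y_t\rvert\le\lvert x-y\rvert$ for all $t$, $\mathbb{P}$-a.s. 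The third line then bounds the instantaneous cost discrepancy by $Lip(h)\lvert x-y\rvert$ pathwise, so that taking for $\mu$ the joint discounted occupation measure of the pair $\bigl((\Gamma_t,X_t,u_t,v_t),(\Gamma_t,Y_t,u_t,w_t)\bigr)$ secures (i) and (iii) at once.

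The subtle point is (ii), and this is where the linear programming machinery is indispensable. Since $w_t$ is a feedback reading the state $X_t$ of the \emph{other} trajectory, $(\Gamma_t,Y_t,u_t,w_t)$ is not a controlled PDMP issued from $(\gamma,y)$ and its occupation measure need not belong to $\Theta_0^\delta(\gamma,y)$. Nevertheless, the shared mode mechanism is consistent with the $y$-trajectory's own dynamics (the rate and transition depending only on $(\gamma,u)$), so at each instant the infinitesimal evolution of $(\Gamma_t,Y_t)$ is governed by the generator $\mathcal{L}^{u_t,w_t}$ of \eqref{Luv}. Applying Dynkin's formula to $e^{-\delta t}\phi(\Gamma_t,Y_t)$ for arbitrary $\phi:\mathbb{M}\to C_b^1(\mathbb{R}^N)$, multiplying by $\delta$ and letting $t\to\infty$ reproduces exactly the balance identity \eqref{Thetadelta}; hence the second marginal of $\mu$ satisfies the linear constraint and lies in $\Theta^\delta(\gamma,y)$, which is (ii). The link between occupation measures and this relaxed class is precisely what Theorem \ref{ThLP} provides.

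The main obstacle, and the reason the proof needs the four-step program of Section \ref{Section6ProofMain2}, is the rigorous construction of the responding trajectory: the selector $w_t=w(\Gamma_t,X_t,Y_t,v_t)$ need neither be attained in Condition \ref{NonExpCondition} nor yield a Lipschitz feedback, so the coupled dynamics for $Y$ is an a priori ill-posed feedback differential equation. I would circumvent this exactly along the paper's roadmap. First (Subsection \ref{Subsection6.2PiecewiseCstePolicies}) I reduce to reference controls that are piecewise constant in time, using the approximation of $v^\delta$ by policies in $\mathcal{A}_{ad}^n$; this is where the tolerance $\varepsilon$ of (iii) is spent. Second (Subsection \ref{Subsection6.3RandomMeasures}) I recast the PDMP as an SDE driven by a random measure, so that the common mode mechanism and jump times are carried by a single reference measure shared by both trajectories. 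Third (Subsection \ref{Subsection6.4MeasureEmbedding}) I embed the resulting $y$-dynamics into $\Theta^\delta(\gamma,y)$ via the linear programming characterization, absorbing both the non-admissibility of $w$ and the approximation errors in the closedness of the constraint set. Finally (Subsection \ref{Subsection6.5Coupling}) the SDE estimates above assemble the pseudo-coupling $\mu$ with properties (i)--(iii). With Condition \ref{NonExp} thereby verified on $\mathbb{M}\times\mathbb{K}^2$, Theorem \ref{ThUniformConvMain} applies and yields the stated conclusion.
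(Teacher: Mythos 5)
Your proposal is correct and follows essentially the same route as the paper's own proof: you reduce Theorem \ref{ThMainFirst} to verifying Condition \ref{NonExp} and then reproduce the paper's four-step construction --- approximation by piecewise constant policies in $\mathcal{A}_{ad}^{n}$ (Theorem \ref{ThPiecewiseCtControl} and Remark \ref{RemarkNonExpCtCtrl}), the SDE representation via the random measure $p$, the It\^{o}/Dynkin embedding of the occupation measure of $(\Gamma,Y^{y_{0},w})$ into $\Theta^{\delta}(\gamma,y)$, and the coupling through a measurable selection from the set-valued map built on Condition \ref{NonExpCondition}, discretized in time to make the feedback well posed. The only minor imprecision is bookkeeping: in the paper the tolerance $\varepsilon$ in Condition \ref{NonExp}(iii) absorbs the coupling error $\omega(n^{-1})$ for piecewise constant $(u,v)$, while the reduction to $\mathcal{A}_{ad}^{n}$ is spent separately (inside the equicontinuity argument of Proposition \ref{PropUniformContModuli}), and the infimum in Condition \ref{NonExpCondition} is in fact attained by compactness of $V$ --- the genuine obstruction is only the lack of regularity of the selector, exactly as you then handle it.
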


The proof of this result is postponed to Section \ref{Section6ProofMain2}. It
is based on constructing explicit couplings satisfying Condition \ref{NonExp}
and it relies on four steps. The first step is showing that the value
functions $v^{\delta}$ can be suitably approximated by using piecewise
constant open-loop policies. The second step is to interpret the system as a
stochastic differential equation (SDE) with respect to some random measure.
The third step is to embed the solutions of these SDE in a space of measures
satisfying a suitable linear constraint via the linear programming approach.
To conclude, the fourth step provides a constructive (pseudo-) coupling using
SDE estimates.

\section{Example of Application\label{Section4Examples}}

\subsection{Some Considerations on a Biological Model\label{Subsection4.1}}

We consider the model introduced in \cite{hasty_pradines_dolnik_collins_00} to
describe the regulation of gene expression. The model is derived from the
promoter region of bacteriophage $\lambda$. The simplification proposed by the
authors of \cite{hasty_pradines_dolnik_collins_00} consists in considering a
mutant system in which only two operator sites (known as OR2 and OR3) are
present. The gene cI expresses repressor (CI), which dimerizes and binds to
the DNA as a transcription factor in one of the two available sites. The site
OR2 leads to enhanced transcription, while OR3 represses transcription. Using
the notations in \cite{hasty_pradines_dolnik_collins_00}, we let $X_{1}$ stand
for the repressor, $X_{2}$ for the dimer, $D$ for the DNA promoter site,
$DX_{2}$ for the binding to the OR2 site, $DX_{2}^{\ast}$ for the binding to
the OR3 site and $DX_{2}X_{2}$ for the binding to both sites. We also denote
by $P$ the RNA\ polymerase concentration and by $n$ the number of proteins per
mRNA transcript. The dimerization, binding, transcription and degradation
reactions are summarized by%

\[
\left\{
\begin{array}
[c]{l}%
2X_{1}\text{ \ \ \ \ \ \ \ \ \ }\overset{K_{1}\left(  u,v\right)
}{\rightleftarrows}X_{2},\\
D+X_{2}\text{ \ \ \ }\overset{K_{2}\left(  u\right)  }{\rightleftarrows}%
DX_{2},\\
D+X_{2}\text{ \ \ \ }\overset{K_{3}\left(  u\right)  }{\rightleftarrows}%
DX_{2}^{\ast},\\
DX_{2}+X_{2}\overset{K_{4}\left(  u\right)  }{\rightleftarrows}DX_{2}X_{2},\\
DX_{2}+P\text{ \ }\overset{K_{t}\left(  u\right)  }{\rightarrow}%
DX_{2}+P+nX_{1},\\
X_{1}\text{\ \ \ \ \ \ \ \ \ \ \ }\overset{K_{d}\left(  u,v\right)
}{\rightarrow}.
\end{array}
\right.
\]
The capital letters $K_{i},$ $1\leq i\leq4$ for the reversible reactions
correspond to couples of direct/reverse speed functions $k_{i},k_{-i},$ while
$K_{t}$ and $K_{d}$ only to direct speed functions $k_{t}$ and $k_{d}$. Host
DNA gyrase puts negative supercoils in the circular chromosome, causing
A-T-rich regions to unwind and drive transcription. This is why, in the model
written here, the binding speeds $k_{2}$ (to the promoter of the lysogenic
cycle $P_{RM}$) $,$ $k_{3}$ (to OR3), respectively $k_{4}$ and the reverse
speeds as well as the transcription speed $k_{t}$ are assumed to depend only
on a control on the host E-coli (denoted by $u).$ This control also acts on
the prophage and, hence, we find it, together with some additional control
$v,$ in the dimerization speed $k_{1}$ and the degradation speed $k_{d}$. 

\subsection{The Associated Mathematical Model\label{Subsection4.2} }

Let us briefly explain how a mathematical model can be associated to the
previous system.

\textbf{(A) Discrete/continuous components} 

We distinguish between components that are discrete (only affected by jumps)
and components that also have piecewise continuous dynamics. For the host
(E-Coli), we can have the following modes: either unoccupied DNA ($D=1$,
$DX_{2}=DX_{2}^{\ast}=DX_{2}X_{2}=0),$ or binding to OR2 ($D=DX_{2}^{\ast
}=DX_{2}X_{2}=0,$ $DX_{2}=1$), or to OR3 ($D=DX_{2}^{\ast}=DX_{2}X_{2}=0,$
$DX_{2}^{\ast}=1$) or to both sites ($D=DX_{2}^{\ast}=DX_{2}=0,$ $DX_{2}%
X_{2}=1$). It is obvious that these components are discrete and they belong to
$\mathbb{M}=\left\{  \gamma\in\left\{  0,1\right\}  ^{4}:\sum_{i=1}^{4}%
\gamma_{i}=1\right\}  .$ Every reaction involving at least one discrete
component will be of jump-type. We then have the four jump reactions %

\[%
\begin{array}
[c]{l}%
D+X_{2}\text{ \ \ \ }\overset{K_{2}\left(  u\right)  }{\rightleftarrows}%
DX_{2},\text{ }D+X_{2}\text{ \ \ \ }\overset{K_{3}\left(  u\right)
}{\rightleftarrows}DX_{2}^{\ast},\\
DX_{2}+X_{2}\overset{K_{4}\left(  u\right)  }{\rightleftarrows}DX_{2}%
X_{2},\text{ }DX_{2}+P\text{ \ }\overset{K_{t}\left(  u\right)  }{\rightarrow
}DX_{2}+P+nX_{1},
\end{array}
\]

The couple repressor/dimer $\left(  X_{1},X_{2}\right)  $ has a different
scale and is averaged (has a deterministic evolution) between jumps$.$ Hence,
we deal with a hybrid model on $\mathbb{M\times%
\mathbb{R}
}^{2}.$ 

\textbf{(B) Jump mechanism}

Let us take an example. We assume that the current mode is unoccupied DNA
($\gamma=\left(  1,0,0,0\right)  $). The only jump reactions possible are
\[
D+X_{2}\ \ \ \overset{k_{2}\left(  u\right)  }{\rightarrow}DX_{2}\text{ or
}D+X_{2}\ \ \ \overset{k_{3}\left(  u\right)  }{\rightarrow}DX_{2}^{\ast}.
\]

The reaction $D+X_{2}$ \ \ \ $\overset{k_{2}\left(  u\right)  }{\rightarrow
}DX_{2}$ means that free DNA will be occupied by one dimer at OR2 position.
Therefore, we have a DNA and a dimer "consumed" and an OR2 binding "created".
The system jumps
\[
\text{from }\left(  1,0,0,0,x_{1},x_{2}\right)  \text{ to }\left(
0,1,0,0,x_{1},x_{2}-1\right)  .
\]
Of course, for a consistent mathematical model, since concentrations cannot be
negative, to $\left(  x_{1},x_{2}\right)  $ we actually add $\left(
0,-\min\left(  1,x_{2}\right)  \right)  $. 

The parameter $\lambda$ is chosen as the "propensity" function (i.e. the sum
of all possible reaction speeds)
\[
\lambda_{\left(  1,0,0,0\right)  }\left(  u,v\right)  =\lambda_{\left(
1,0,0,0\right)  }\left(  u\right)  =k_{2}\left(  u\right)  +k_{3}\left(
u\right)  ,
\]
The probability for the reaction $D+X_{2}\ \ \ \overset{k_{2}\left(  u\right)
}{\rightarrow}DX_{2}$ to take place is proportional to its reaction speed
(i.e. $\frac{k_{2}\left(  u\right)  }{k_{2}\left(  u\right)  +k_{3}\left(
u\right)  }).$

To summarize, one constructs%
\[
\left\{
\begin{array}
[c]{l}%
Q^{0}\left(  \left(  1,0,0,0\right)  ,\left(  u,v\right)  ,d\theta\right)  \\
=Q^{0}\left(  \left(  1,0,0,0\right)  ,u,d\theta\right)  =\frac{k_{2}\left(
u\right)  }{\lambda_{\left(  1,0,0,0\right)  }\left(  u\right)  }%
\delta_{\left(  0,1,0,0\right)  }\left(  d\theta\right)  +\frac{k_{3}\left(
u\right)  }{\lambda_{\left(  1,0,0,0\right)  }\left(  u\right)  }%
\delta_{\left(  0,0,1,0\right)  }\left(  d\theta\right)  ,\\
Q\left(  \left(  1,0,0,0\right)  ,\left(  x_{1},x_{2}\right)  ,u,d\theta
dy\right)  =\delta_{\left(  x_{1},x_{2}\right)  +g_{\gamma}\left(
\theta,\left(  x_{1},x_{2}\right)  ,u,v\right)  }\left(  dy\right)
Q^{0}\left(  \left(  1,0,0,0\right)  ,u,d\theta\right)  ,\text{ where}\\
g_{\left(  1,0,0,0\right)  }\left(  \theta,\left(  x_{1},x_{2}\right)
,u,v\right)  =\left(  0,-\min\left(  1,x_{2}\right)  \right)  1_{\theta
\in\left\{  \left(  0,1,0,0\right)  ,\left(  0,0,1,0\right)  \right\}  }.
\end{array}
\right.
\]

\begin{remark}
A special part is played by the transcription reaction
\[
DX_{2}+P\text{ \ }\overset{K_{t}\left(  u\right)  }{\rightarrow}%
DX_{2}+P+nX_{1}%
\]
which is a slow reaction. Details on a possible construction will be given in
Subsection \ref{Subsection4.3}. 
\end{remark}

\textbf{(C) Deterministic flow}

The deterministic behavior is governed by the (three) reactions%
\[
2X_{1}\overset{K_{1}\left(  u,v\right)  }{\rightleftarrows}X_{2}\text{ and
}X_{1}\overset{K_{d}\left(  u,v\right)  }{\rightarrow}.
\]

For example, the reaction $2X_{1}\overset{k_{1}\left(  u,v\right)
}{\rightarrow}X_{2}$ needs two molecules of reactant $X_{1}$ and has
$k_{1}\left(  u,v\right)  $ as speed and produces one $X_{2}$. Then, following
the approach in \cite{hasty_pradines_dolnik_collins_00}, its contribution to
the vector field is proportional to the product of reactants at the power
molecules needed (i.e. $x_{1}^{2}$) and the speed $k_{1}\left(  u,v\right)  .$
We get $-2x_{1}^{2}k_{1}\left(  u,v\right)  $ for the reactant and
$+1x_{1}^{2}k_{1}\left(  u,v\right)  $ for the product. Adding the three
contributions, we get
\[
f_{\gamma}\left(  \left(  x_{1},x_{2}\right)  ,u,v\right)  =\left(
-2k_{1}\left(  u,v\right)  x_{1}^{2}+2k_{-1}\left(  u,v\right)  x_{2}%
-k_{d}\left(  u,v\right)  x_{1},k_{1}\left(  u,v\right)  x_{1}^{2}%
-k_{-1}\left(  u,v\right)  x_{2}\right)  .
\]
For further details on constructions related to systems of chemical reactions
in gene networks, the reader is referred to
\cite{crudu_debussche_radulescu_09},
\cite{crudu_Debussche_Muller_Radulescu_2012}, \cite{G8}, etc.

\subsection{A Toy Nondissipative Model\label{Subsection4.3}}

Let us exhibit a simple choice of coefficients in the study of phage
$\lambda.$ We consider $U=V=\left[  0,1\right]  $ (worst conditions, best
conditions for chemical reactions). For the reaction speed, we take%
\begin{align*}
k_{i}\left(  u\right)   &  =k_{i}\left(  u+u_{0}\right)  ,\text{ for }%
i\in\left\{  \pm2,\pm3,\pm4,t\right\}  \text{ for the jump reactions
determined by host,}\\
k_{1}\left(  u,v\right)   &  =\frac{1}{\alpha}uv,\text{ }k_{2}\left(
u,v\right)  =uv,k_{d}\left(  u,v\right)  =uv\text{ to reflect a certain
competition,}%
\end{align*}
for all $u,v\in\left[  0,1\right]  .$ Here, $u_{0}>0$ corresponds to the
slowest reaction speed, $k_{i}>0$ are real constants and $\alpha$ is some
maximal concentration level for repressor and dimer. We assume $P$ to toggle
between $0$ and $1$ and, as soon as $P$ toggles to $1$ a transcription burst
takes place. To take into account the slow aspect of the transcription
reaction (see \cite{hasty_pradines_dolnik_collins_00}), we actually take
\[
\mathbb{M}=\left\{  \left(  1,0,0,0,0\right)  ,\left(  0,1,0,0,0\right)
,\left(  0,1,0,0,1\right)  ,\left(  0,0,1,0,0\right)  ,\left(
0,0,0,1,0\right)  \right\}  .
\]
This means that binding to OR2 corresponds to two states $\left(
0,1,0,0,0\right)  $ (allowing transcription)$,\left(  0,1,0,0,1\right)  $
(when transcription has just taken place and is no longer allowed). We set%

\[%
\begin{array}
[c]{l}%
\lambda_{\gamma}\left(  u,v\right)  =\lambda_{\gamma}\left(  u\right)
=\left\{
\begin{array}
[c]{rcc}%
\left(  k_{2}+k_{3}\right)  \left(  u+u_{0}\right)  , & \text{if} &
\gamma=\left(  1,0,0,0,0\right)  ,\\
\left(  k_{-2}+k_{4}+k_{t}\right)  \left(  u+u_{0}\right)  , & \text{if} &
\gamma=\left(  0,1,0,0,0\right)  ,\\
\left(  k_{-2}+k_{4}\right)  \left(  u+u_{0}\right)  , & \text{if} &
\gamma=\left(  0,1,0,0,1\right)  ,\\
k_{-3}\left(  u+u_{0}\right)  , & \text{if} & \gamma=\left(  0,0,1,0,0\right)
,\\
k_{-4}\left(  u+u_{0}\right)  , & \text{if} & \gamma=\left(  0,0,0,1,0\right)
.
\end{array}
\right.  ,\\
Q^{0}\left(  \gamma,u\right)  =\\
\left\{
\begin{array}
[c]{rcc}%
\frac{k_{2}}{k_{2}+k_{3}}\delta_{\left(  0,1,0,0,0\right)  }+\frac{k_{3}%
}{k_{2}+k_{3}}\delta_{\left(  0,0,1,0,0\right)  }, & \text{if} &
\gamma=\left(  1,0,0,0,0\right)  ,\\
\frac{k_{-2}}{k_{-2}+k_{4}+k_{t}}\delta_{\left(  1,0,0,0,0\right)  }%
+\frac{k_{4}}{k_{-2}+k_{4}+k_{t}}\delta_{\left(  0,0,0,1,0\right)  }%
+\frac{k_{t}}{k_{-2}+k_{4}+k_{t}}\delta_{\left(  0,1,0,0,1\right)  }, &
\text{if} & \gamma=\left(  0,1,0,0,0\right)  ,\\
\frac{k_{-2}}{k_{-2}+k_{4}}\delta_{\left(  1,0,0,0,0\right)  }+\frac{k_{4}%
}{k_{-2}+k_{4}}\delta_{\left(  0,0,0,1,0\right)  }, & \text{if} &
\gamma=\left(  0,1,0,0,1\right)  ,\\
\delta_{\left(  1,0,0,0,0\right)  }, & \text{if} & \gamma=\left(
0,0,1,0,0\right)  ,\\
\delta_{\left(  0,1,0,0,0\right)  }, & \text{if} & \gamma=\left(
0,0,0,1,0\right)  .
\end{array}
\right. \\
g_{\gamma}\left(  \theta,\left(  x_{1},x_{2}\right)  \right)  =\left\{
\begin{array}
[c]{rcc}%
\left(  0,-\min\left(  1,x_{2}\right)  \right)  , & \text{if} & \gamma=\left(
1,0,0,0,0\right)  ,\\
\left(  0,\min\left(  1,\alpha-x_{2}\right)  \right)  , & \text{if} &
\gamma=\left(  0,1,0,0,\gamma_{5}\right)  ,\text{ }\gamma_{5}\in\left\{
0,1\right\}  ,\text{ }\theta=\left(  1,0,0,0,0\right)  ,\\
\left(  0,-\min\left(  1,x_{2}\right)  \right)  , & \text{if} & \gamma=\left(
0,1,0,0,\gamma_{5}\right)  ,\text{ }\gamma_{5}\in\left\{  0,1\right\}  ,\text{
}\theta=\left(  0,0,0,1,0\right)  ,\\
\left(  \min\left(  n,\alpha-x_{1}\right)  ,0\right)  , & \text{if} &
\gamma=\left(  0,1,0,0,0\right)  ,\text{ }\theta=\left(  0,1,0,0,1\right)  ,\\
\left(  0,\min\left(  1,\alpha-x_{2}\right)  \right)  , & \text{if} &
\gamma=\left(  0,0,1,0,0\right)  ,\text{ }\theta=\left(  1,0,0,0,0\right)  ,\\
\left(  0,\min\left(  1,\alpha-x_{2}\right)  \right)  , & \text{if} &
\gamma=\left(  0,0,0,1,0\right)  ,
\end{array}
\right. \\
f_{\gamma}\left(  \left(  x_{1},x_{2}\right)  ,u,v\right)  =\left(  f_{\gamma
}^{1}\left(  \left(  x_{1},x_{2}\right)  ,u,v\right)  ,f_{\gamma}^{2}\left(
\left(  x_{1},x_{2}\right)  ,u,v\right)  \right) \\
\text{ \ \ \ \ \ \ \ \ \ \ \ \ \ \ \ \ \ \ \ \ \ \ \ \ }=\left(  -\frac
{2uv}{\alpha}x_{1}^{2}+2uvx_{2}-uvx_{1},\frac{uv}{\alpha}x_{1}^{2}%
-uvx_{2}\right)  .
\end{array}
\]

The reader is invited to note that $\mathbb{M\times}\left[  0,\alpha\right]
^{2}$ is invariant with respect to the previous dynamics. One can either note
that for $x_{1},x_{2}\in\left[  0,\alpha\right]  $
\begin{align*}
f_{\gamma}^{1}\left(  \left(  0,x_{2}\right)  ,u,v\right)   &  =2x_{2}%
uv\geq0,\text{ }f_{\gamma}^{2}\left(  \left(  x_{1},0\right)  ,u,v\right)
=\frac{x_{1}^{2}}{\alpha}uv\geq0,\\
f_{\gamma}^{1}\left(  \left(  \alpha,x_{2}\right)  ,u,v\right)   &  =\left(
2x_{2}-3\alpha\right)  uv\leq0,\text{ }f_{\gamma}^{2}\left(  \left(
x_{1},\alpha\right)  ,u,v\right)  =\left(  \frac{x_{1}^{2}}{\alpha}%
-\alpha\right)  uv\leq0,
\end{align*}
or, alternatively, compute the normal cone to the frontier of $\left[
0,\alpha\right]  ^{2}$ (see \cite[Theorem 2.8]{G8}) to deduce that $\left[
0,\alpha\right]  ^{2}$ is invariant with respect to the deterministic
dynamics. Moreover, the definition of $g_{\gamma}$ guarantees that $\left(
X_{1},X_{2}\right)  $ does not leave $\left[  0,\alpha\right]  ^{2}.$ Let us
also note that, whenever $x=\left(  x_{1},x_{2}\right)  \in\left[
0,\alpha\right]  ^{2},$ $y=\left(  y_{1},y_{2}\right)  \in\left[
0,\alpha\right]  ^{2},$
\begin{align*}
&  \text{\ }\left\langle f_{\gamma}\left(  x,u,v\right)  -f_{\gamma}\left(
y,u,v\right)  ,x-y\right\rangle \\
&  =-uv\left(  \left[  \frac{2}{\alpha}\left(  x_{1}+y_{1}\right)  +1\right]
\left(  x_{1}-y_{1}\right)  ^{2}-\left[  2+\frac{1}{\alpha}\left(  x_{1}%
+y_{1}\right)  \right]  \left(  x_{1}-y_{1}\right)  \left(  x_{2}%
-y_{2}\right)  +\left(  x_{2}-y_{2}\right)  ^{2}\right)  \leq0.
\end{align*}
This is a simple consequence of the fact that
\[
\left[  2+\frac{1}{\alpha}\left(  x_{1}+y_{1}\right)  \right]  ^{2}-4\left[
\frac{2}{\alpha}\left(  x_{1}+y_{1}\right)  +1\right]  =\left[  2-\frac
{1}{\alpha}\left(  x_{1}+y_{1}\right)  \right]  ^{2}-4\leq0.
\]
We deduce that
\[
\sup_{v\in V}\inf_{w\in V}\left\langle f_{\gamma}\left(  x,u,v\right)
-f_{\gamma}\left(  y,u,w\right)  ,x-y\right\rangle \leq0\text{.}%
\]
Moreover, for all $\left(  x,y\right)  \in\left[  0,\alpha\right]  ^{2},$
\[
\sup_{v\in V}\inf_{w\in V}\left\langle f_{\gamma}\left(  x,0,v\right)
-f_{\gamma}\left(  y,0,w\right)  ,x-y\right\rangle =0\text{.}%
\]
It follows that one is not able to find any positive constant $c>0$ such that
\[
\sup_{v\in V}\inf_{w\in V}\left\langle f_{\gamma}\left(  x,0,v\right)
-f_{\gamma}\left(  y,0,w\right)  ,x-y\right\rangle \leq-c\left\vert
x-y\right\vert ^{2}\text{,}%
\]
for all $\left(  x,y\right)  \in\left[  0,\alpha\right]  ^{2}$ and we deal
with a non-dissipative system (unlike, for instance,
\cite{Benaim_LEBorgne_Malrieu_Zitt_2012}). 

Finally, the function $t\mapsto t-\min\left(  1,t\right)  ,$ $t\mapsto
t+\min\left(  k,\alpha-t\right)  $ are Lipschitz continuous with Lipschitz
constant $1$ on $\left[  0,\alpha\right]  $ for all $k>0.$ It follows that
$\underset{\theta\in\mathbb{M}}{\sup}\left\vert x+g_{\gamma}\left(
\theta,x\right)  -y-g_{\gamma}\left(  \theta,y\right)  \right\vert
\leq\left\vert x-y\right\vert .$

\section{Proof of the First Main Result (Theorem \ref{ThUniformConvMain}%
)\label{Section5ProofMain1}}

In order to prove Theorem \ref{ThUniformConvMain}, we proceed as follows.
First, we recall the link between the set of occupation measures $\Theta
_{0}^{\delta}\left(  \gamma,x\right)  $ and the family $\Theta^{\delta}\left(
\gamma,x\right)  $ (taken from \cite{G7}). Next, we prove that Condition
\ref{NonExp} implies the equicontinuity of the family of Abel-average value
functions $\left(  v^{\delta}\right)  _{\delta>0}.$ Finally, we recall the
results in \cite{GoreacSerea_TauberianPDMP_2014} on Abel-type theorems to conclude.

\subsection{Step 1 : Equicontinuity of Abel-average Values}

The following result corresponds to \cite[Theorem 7 and Corollary 8]{G7} for
this (less general) setting. It gives the link between the set of occupation
measures $\Theta_{0}^{\delta}\left(  \gamma,x\right)  $ and the family
$\Theta^{\delta}\left(  \gamma,x\right)  $.

\begin{theorem}
\label{ThLP}i) For every $x\in%
\mathbb{R}
^{N}$ and every $\delta>0,$
\[
v^{\delta}\left(  \gamma,x\right)  =\underset{\mu\in\Theta^{\delta}\left(
\gamma,x\right)  }{\inf}\int_{\mathbb{M\times}%
\mathbb{R}
^{N}\times\mathbb{U}}h\left(  \theta,y,u\right)  \mu\left(  d\theta
,dy,du\right)  .
\]

ii) For every $\left(  \gamma,x\right)  \in\mathbb{M\times}%
\mathbb{R}
^{N}$ and every $\delta>0,$ $\Theta^{\delta}\left(  \gamma,x\right)
=\overline{co}\left(  \Theta_{0}^{\delta}\left(  \gamma,x\right)  \right)  $.
\end{theorem}

We are now able to prove the following equicontinuity result. 

\begin{proposition}
\label{PropUniformContModuli}We assume Condition \ref{NonExp} to hold true.
Then, for every $\delta>0$ and every $\left(  \gamma,x,y\right)  \in
\mathbb{M}\times\mathbb{R}^{2N},$ one has
\[
\left\vert v^{\delta}\left(  \gamma,x\right)  -v^{\delta}\left(
\gamma,y\right)  \right\vert \leq Lip\left(  h\right)  \left\vert
x-y\right\vert .
\]

\end{proposition}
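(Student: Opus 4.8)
The plan is to use the characterization of $v^{\delta}$ from Theorem \ref{ThLP}(i) together with the abstract coupling supplied by Condition \ref{NonExp}. Fix $\delta>0$ and $\left(\gamma,x,y\right)\in\mathbb{M}\times\mathbb{R}^{2N}$, and let $\varepsilon>0$. By Theorem \ref{ThLP}(i), since $v^{\delta}\left(\gamma,x\right)$ is an infimum over $\mu\in\Theta^{\delta}\left(\gamma,x\right)$, and because $\Theta^{\delta}\left(\gamma,x\right)=\overline{co}\left(\Theta_{0}^{\delta}\left(\gamma,x\right)\right)$ by Theorem \ref{ThLP}(ii) while $h$ is linear against measures, I can approximate: there exists a control $u\in\mathcal{A}_{ad}$ whose associated discounted occupation measure $\mu_{\gamma,x,u}^{\delta}\in\Theta_{0}^{\delta}\left(\gamma,x\right)$ satisfies
\[
\int_{\mathbb{M}\times\mathbb{R}^{N}\times\mathbb{U}}h\left(\theta,z,w\right)\mu_{\gamma,x,u}^{\delta}\left(d\theta,dz,dw\right)\leq v^{\delta}\left(\gamma,x\right)+\varepsilon.
\]

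Next I would apply Condition \ref{NonExp} to this very $u$ (and to $\gamma$, $x$, $y$, $\varepsilon$) to obtain a coupling measure $\mu\in\mathcal{P}\left(\left(\mathbb{M}\times\mathbb{R}^{N}\times\mathbb{U}\right)^{2}\right)$ whose first marginal is exactly $\mu_{\gamma,x,u}^{\delta}$ and whose second marginal $\nu:=\mu\left(\mathbb{M}\times\mathbb{R}^{N}\times\mathbb{U},\cdot\right)$ lies in $\Theta^{\delta}\left(\gamma,y\right)$, with the cost-difference estimate (iii) in force. Then I estimate, using the triangle inequality and the fact that the two integrals below are marginal integrals of $\mu$,
\[
v^{\delta}\left(\gamma,y\right)-v^{\delta}\left(\gamma,x\right)\leq\int_{\mathbb{M}\times\mathbb{R}^{N}\times\mathbb{U}}h\,d\nu-\int_{\mathbb{M}\times\mathbb{R}^{N}\times\mathbb{U}}h\,d\mu_{\gamma,x,u}^{\delta}+\varepsilon,
\]
where the first inequality uses Theorem \ref{ThLP}(i) again to bound $v^{\delta}\left(\gamma,y\right)\leq\int h\,d\nu$ since $\nu\in\Theta^{\delta}\left(\gamma,y\right)$, and the $\varepsilon$ absorbs the near-optimality of $u$. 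Rewriting the difference of the two marginal integrals as a single integral of $h\left(\theta',z',w'\right)-h\left(\theta,z,w\right)$ against the full coupling $\mu$ and bounding it by the integral of the absolute value, property (iii) yields
\[
v^{\delta}\left(\gamma,y\right)-v^{\delta}\left(\gamma,x\right)\leq Lip\left(h\right)\left\vert x-y\right\vert+2\varepsilon.
\]

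Since $\varepsilon>0$ is arbitrary, I get $v^{\delta}\left(\gamma,y\right)-v^{\delta}\left(\gamma,x\right)\leq Lip\left(h\right)\left\vert x-y\right\vert$. Interchanging the roles of $x$ and $y$ (applying Condition \ref{NonExp} with the near-optimal control for the $y$-problem) gives the reverse inequality, and the two together furnish the desired bound on $\left\vert v^{\delta}\left(\gamma,x\right)-v^{\delta}\left(\gamma,y\right)\right\vert$. The main subtlety I expect is the bookkeeping around Theorem \ref{ThLP}: because $v^{\delta}\left(\gamma,x\right)$ is an infimum over the convex hull $\Theta^{\delta}$ rather than over the raw occupation measures $\Theta_{0}^{\delta}$, I must justify that a near-optimal element can be taken in $\Theta_{0}^{\delta}$ (so that Condition \ref{NonExp}(i), which demands the first marginal be a genuine occupation measure $\mu_{\gamma,x,u}^{\delta}$, can be invoked); this follows from linearity of the integral functional over the closed convex hull, since the infimum of a linear functional over $\overline{co}\left(\Theta_{0}^{\delta}\right)$ equals its infimum over $\Theta_{0}^{\delta}$. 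The rest is routine marginal manipulation and the triangle inequality.
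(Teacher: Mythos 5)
Your proof is correct and follows essentially the same route as the paper's: choose a near-optimal control for the $x$-problem, invoke Condition \ref{NonExp} to get the coupling whose first marginal is $\mu_{\gamma,x,u}^{\delta}$, bound $v^{\delta}\left(\gamma,y\right)$ via Theorem \ref{ThLP}(i) applied to the second marginal, rewrite the difference of marginal integrals against the coupling, and conclude by arbitrariness of $\varepsilon$ and symmetry in $x$ and $y$. The only (harmless) difference is your detour through Theorem \ref{ThLP}(ii) to produce a near-optimal element of $\Theta_{0}^{\delta}\left(\gamma,x\right)$: the paper obtains this directly from the original definition of $v^{\delta}$ as an infimum over admissible controls, since the discounted cost of any control $u$ equals $\int h\,d\mu_{\gamma,x,u}^{\delta}$ by the definition (\ref{gammauC}) of the occupation measure, so the convex-hull argument is not needed.
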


\begin{proof}
Let us fix $\delta>0$ and $\left(  \gamma,x,y\right)  \in\mathbb{M}%
\times\mathbb{R}^{2N}$. We only need to prove that, for every $\varepsilon
>0,$
\[
v^{\delta}\left(  \gamma,y\right)  \leq v^{\delta}\left(  \gamma,x\right)
+Lip\left(  h\right)  \left\vert x-y\right\vert +\varepsilon.
\]
By definition of $v^{\delta}$, for a fixed $\varepsilon>0,$ there exists some
$u\in\mathcal{A}_{ad}\left(  \mathbb{U}\right)  $ such that
\[
v^{\delta}\left(  \gamma,x\right)  +\frac{\varepsilon}{2}\geq\delta
\mathbb{E}\left[  \int_{0}^{\infty}e^{-\delta t}h\left(  \Gamma_{t}%
^{\gamma,x,u},X_{t}^{\gamma,x,u},u_{t}\right)  dt\right]  =\int%
_{\mathbb{M\times}\mathcal{%
\mathbb{R}
}^{N}\times\mathbb{U}}h\left(  \theta,z,w\right)  \mu_{\gamma,x,u}^{\delta
}\left(  d\theta,dz,dw\right)  .
\]
If $\mu$ is the coupling measure given by Condition \ref{NonExp} and
associated to $\frac{\varepsilon}{2},$ we deduce, using Theorem \ref{ThLP}.i
that
\begin{align*}
v^{\delta}\left(  \gamma,y\right)   &  \leq\int_{\left(  \mathbb{M\times
}\mathcal{%
\mathbb{R}
}^{N}\times\mathbb{U}\right)  ^{2}}h\left(  \theta^{\prime},z^{\prime
},w^{\prime}\right)  \mu\left(  d\theta,dz,dw,d\theta^{\prime},dz^{\prime
},dw^{\prime}\right)  \\
&  \leq\int_{\mathbb{M\times}\mathcal{%
\mathbb{R}
}^{N}\times\mathbb{U}}h\left(  \theta,z,w\right)  \mu_{\gamma,x,u}^{\delta
}\left(  d\theta,dz,dw\right)  +Lip\left(  h\right)  \left\vert x-y\right\vert
+\frac{\varepsilon}{2}\\
&  \leq v^{\delta}\left(  \gamma,x\right)  +Lip\left(  h\right)  \left\vert
x-y\right\vert +\varepsilon.
\end{align*}
The proof of our Proposition follows by recalling that $\varepsilon>0$ is arbitrary.
\end{proof}

\subsection{Step 2 : Proof of Theorem \ref{ThUniformConvMain}}

The previous results on existence of a continuity modulus uniform with respect
to the discount parameter $\delta>0$ allows us to prove the existence of a
limit value function as $\delta\rightarrow0.$  Before going to the proof of
Theorem \ref{ThUniformConvMain}, we recall the following.

\begin{lemma}
\label{EssentialLemma}(i) \textbf{\cite[Step 1 of Theorem 4.1]%
{GoreacSerea_TauberianPDMP_2014} }Let us assume that $\left(  v^{\delta
}\right)  _{\delta>0}$ is a relatively compact subset of \ $C\left(
\mathbb{M\times}%
\mathbb{R}
^{N};%
\mathbb{R}
\right)  $. Then, for every $v\in C\left(  \mathbb{M\times}%
\mathbb{R}
^{N};%
\mathbb{R}
\right)  $, every sequence $\left(  \delta_{m}\right)  _{m\geq1}$ such that
$\ \lim_{m\rightarrow\infty}\delta_{m}=0$ and $\left(  v^{\delta_{m}}\right)
_{m\geq1}$ converges uniformly to $v$ on $\mathbb{M\times}%
\mathbb{R}
^{N}$ and every $\varepsilon>0,$ there exists $T>0$ such that
\[
V_{t}\left(  \gamma,x\right)  \geq v\left(  \gamma,x\right)  -\varepsilon
,\text{ for all }\left(  \gamma,x\right)  \in\mathbb{M\times}%
\mathbb{R}
^{N}\text{ and all }t\geq T.
\]

(ii) \textbf{\cite[Theorem 4.1]{GoreacSerea_TauberianPDMP_2014} }Let us assume
that $\left(  v^{\delta}\right)  _{\delta>0}$ is a relatively compact subset
of \ $C\left(  \mathbb{M\times}%
\mathbb{R}
^{N};%
\mathbb{R}
\right)  $. Then, for every $v\in C\left(  \mathbb{M\times}%
\mathbb{R}
^{N};%
\mathbb{R}
\right)  $ and every sequence $\left(  \delta_{m}\right)  _{m\geq1}$ such that
$\ \lim_{m\rightarrow\infty}\delta_{m}=0$ and $\left(  v^{\delta_{m}}\right)
_{m\geq1}$ converges uniformly to $v$ on $\mathbb{M\times}%
\mathbb{R}
^{N},$ the following equality holds true
\[
\underset{t\rightarrow\infty}{\lim\inf}\sup_{\gamma\in\mathbb{M},\text{ }x\in%
\mathbb{R}
^{N}}\left\vert V_{t}\left(  \gamma,x\right)  -v\left(  \gamma,x\right)
\right\vert =0.
\]

(iii) \textbf{\cite[Remark 4.2]{GoreacSerea_TauberianPDMP_2014}} Let us assume
that $\left(  v^{\delta}\right)  _{\delta>0}$ converges uniformly to some
$v^{\ast}\in C\left(  \mathbb{M\times}%
\mathbb{R}
^{N};%
\mathbb{R}
\right)  $ as $\delta\rightarrow0$. Then the functions $\left(  V_{t}\right)
_{t>0}$ converge uniformly on $\mathbb{M\times}%
\mathbb{R}
^{N}$ to $v^{\ast}$%
\[
\underset{t\rightarrow\infty}{\lim}\sup_{\gamma\in\mathbb{M},\text{ }x\in%
\mathbb{R}
^{N}}\left\vert V_{t}\left(  \gamma,x\right)  -v^{\ast}\left(  \gamma
,x\right)  \right\vert =0.
\]

\end{lemma}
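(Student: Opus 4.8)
The plan is to treat the three assertions as the two halves of a \emph{uniform Abel--Tauberian theorem}, resting on two structural facts that require neither dissipativity nor controllability, only the boundedness of $h$ and the dynamic programming structure; the linear-programming representation of $v^\delta$ (Theorem \ref{ThLP}) is available but the argument I have in mind is more elementary. The first ingredient is an averaging identity: for a fixed admissible control $u$, integrating by parts the Laplace transform of the running cost gives $\delta\mathbb{E}\left[\int_0^\infty e^{-\delta t}h\,dt\right]=\int_0^\infty \delta^2 t e^{-\delta t}\left(\frac1t\mathbb{E}\int_0^t h\,ds\right)dt$, where $\delta^2 t e^{-\delta t}\,dt$ is a probability density on $[0,\infty)$ with mean $2/\delta$. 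Since $\frac1t\mathbb{E}\int_0^t h\,ds\ge V_t(\gamma,x)$ for every control, integrating against this density and taking the infimum over $u$ yields the one-sided comparison
\begin{equation*}
v^\delta(\gamma,x)\ \geq\ \int_0^\infty \delta^2 t e^{-\delta t}\,V_t(\gamma,x)\,dt .
\end{equation*}
The second ingredient is a Tauberian regularity estimate: since $t\,V_t=\inf_u\mathbb{E}\int_0^t h\,ds$ and extending or restricting a control changes this integral by at most $h_{\max}\,|t-t'|$, the map $t\mapsto t\,V_t$ is $h_{\max}$-Lipschitz, so $V_t$ is slowly varying. Both facts hold uniformly in $(\gamma,x)$.

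For assertion (i) I would prove the genuinely Tauberian lower bound $V_t\ge v-\varepsilon$ for large $t$ by a restart-and-iterate argument on the finite-horizon dynamic programming principle. Fix $\varepsilon$ and choose $\delta_m$ small with $v^{\delta_m}\ge v-\varepsilon/2$ uniformly; the idea is that if $V_t$ were below $v-\varepsilon$ at some large horizon on a set of states, one could, via the Markov restart in the DPP for $V_t$, concatenate the corresponding near-optimal finite-horizon controls over consecutive blocks of length $\sim t$ to build a policy driving the discounted cost below $v-\varepsilon/2$, contradicting the choice of $\delta_m$. The relative-compactness hypothesis enters precisely here: it furnishes a common modulus of continuity for the $v^\delta$, and hence (through the comparison) for the $V_t$, so that the per-block estimate is uniform over the reachable states and the iteration constants do not degrade; this is what produces the bound \emph{uniformly} in $(\gamma,x)$.

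Assertion (ii) then follows by feeding (i) back into the averaging comparison. Because the density $\delta_m^2 t e^{-\delta_m t}$ concentrates its mass on horizons of order $1/\delta_m\to\infty$, the lower bound from (i) forces $\int_0^\infty \delta_m^2 t e^{-\delta_m t}V_t\,dt\to v$; as this average lies between $v-\varepsilon+o(1)$ and $v^{\delta_m}\to v$, a Markov-inequality argument shows that the horizons on which $V_t\le v+\varepsilon$ carry positive weighted mass, so there exist arbitrarily large $t$ with $|V_t-v|\le\varepsilon$. Combining this with the equicontinuity supplied by relative compactness and diagonalising in $\varepsilon$ gives $\liminf_{t\to\infty}\sup_{\gamma,x}|V_t-v|=0$. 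Finally, for (iii), under full uniform convergence $v^\delta\to v^*$ every subsequence obeys the hypotheses of (i)--(ii), so (i) gives the lower bound $V_t\ge v^*-\varepsilon$ for \emph{all} large $t$ while (ii) gives the matching upper bound along a sequence $t_k\to\infty$; the Lipschitz regularity of $t\mapsto t\,V_t$ then interpolates the upper bound to all large $t$ (consecutive good horizons being asymptotically dense on a multiplicative scale, so that $V_t\le v^*+\varepsilon+o(1)$ between them), upgrading the $\liminf$ to a genuine uniform limit. This promotion from subsequential to full convergence is the standard closing step of such uniform Tauberian results.

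The main obstacle is assertion (i): it is the only step not handed to us by the averaging identity, and establishing it without dissipativity or controllability forces the restart-and-iterate construction. That construction in turn requires a workable dynamic programming principle for the Ces\`{a}ro value $V_t$ (which, as the introduction notes, is more delicate for PDMP than for the discounted $v^\delta$), together with the uniformity over states that only the relative-compactness hypothesis guarantees.
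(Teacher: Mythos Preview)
This lemma is not proved in the paper: it is a citation of results from \cite{GoreacSerea_TauberianPDMP_2014}, and the only information the paper gives about the actual argument is the Remark following the lemma, which says that Step~1 of the cited proof establishes (i) while Step~2 shows directly that $V_{\delta_m^{-1}}\leq v+\varepsilon$ for $m$ large (rather than extracting good horizons by a Markov-inequality argument as you propose for (ii)). So your task is really to reconstruct that external proof, and your outline contains the right supporting ingredients (the integration-by-parts identity $v^\delta\geq\int_0^\infty\delta^2te^{-\delta t}V_t\,dt$ and the $h_{\max}$-Lipschitz continuity of $t\mapsto tV_t$).

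The genuine gap is in your argument for (i). Your restart-and-iterate scheme says: if $V_t(x_0)<v(x_0)-\varepsilon$, take a near-optimal finite-horizon control, run it for time $t$, then restart from $X_t$, and so on, producing a discounted cost below $v-\varepsilon/2$. But after the first block you are at a random state $X_t$ about which you know nothing: you have \emph{not} assumed $V_t(X_t)<v(X_t)-\varepsilon$, so the ``below $v-\varepsilon$'' block cost cannot be sustained over successive blocks and no contradiction with $v^{\delta_m}\geq v-\varepsilon/2$ emerges. The equicontinuity supplied by relative compactness does not rescue this, because the issue is not regularity in $x$ but the lack of any information on $V_t$ at the states actually visited. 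Moreover, you explicitly hinge this step on a dynamic programming principle for $V_t$, which---as the paper's introduction stresses and you yourself note---is precisely the tool one wishes to \emph{avoid} for PDMP; even were it available, the concatenation would still not close for the reason above.

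The standard route for (i), and almost certainly the one taken in \cite{GoreacSerea_TauberianPDMP_2014}, uses only the DPP for $v^\delta$, which \emph{is} available: for every control $u$ and every pair $r\leq s$ one has $\mathbb{E}[v^{\delta}(X_r)]\leq\mathbb{E}\bigl[\int_r^s\delta e^{-\delta(\sigma-r)}h_\sigma\,d\sigma\bigr]+e^{-\delta(s-r)}\mathbb{E}[v^\delta(X_s)]$. Applying this on short subintervals of $[0,t]$ and summing produces a lower bound for $\frac{1}{t}\mathbb{E}\int_0^th$ in terms of a time-average of $\mathbb{E}[v^{\delta_m}(X_{k\tau})]$ plus a telescoping remainder of order $(\delta_m t)^{-1}$; the remaining work is to pass from this averaged quantity to the pointwise bound $V_t(x)\geq v(x)-\varepsilon$, and that is where the relative compactness truly enters. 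Your sketch does not supply this step. Once (i) is in hand, your arguments for (ii) and (iii) are essentially sound, though the Remark indicates that the upper bound in (ii) is obtained in the cited paper along the explicit sequence $t=\delta_m^{-1}$ rather than by your density argument.
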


\begin{remark}
In the proof of \textbf{\cite[Theorem 4.1]{GoreacSerea_TauberianPDMP_2014},
}one gives the condition (i) in Step 1. However, in all generality, the
converse is only partial. Indeed, Step 2 (see \textbf{\cite[Page 174, Eq.
(10)]{GoreacSerea_TauberianPDMP_2014} }reads : For every $\varepsilon>0$,
there exists $m_{0}\geq1$ such that
\[
V_{\delta_{m}^{-1}}\left(  \gamma,x\right)  \leq v\left(  \gamma,x\right)
+\varepsilon,
\]
for all $m\geq m_{0}$ and all $\left(  \gamma,x\right)  \in\mathbb{M\times}%
\mathbb{R}
^{N}.$ Thus, the index $t$ of the subfamily $\left(  V_{t}\right)  _{t}$
converging to $v$ depends directly on the sequence $\left(  \delta_{m}\right)
_{m\geq1}.$ Of course, whenever the limit $v$ is independent on the choice of
$\delta$, so is $t$ and one gets (iii)$.$
\end{remark}

We are now able to complete the proof of Theorem \ref{ThUniformConvMain}.

\begin{proof}
[Proof of Theorem \ref{ThUniformConvMain}]Let us denote by
\begin{equation}
v^{\ast}\left(  \gamma,x\right)  :=\underset{\delta\rightarrow0}{\lim\sup
}\text{ }v^{\delta}\left(  \gamma,x\right)  ,\label{eq0}%
\end{equation}
for every $\left(  \gamma,x\right)  \in$ $\mathbb{M\times K}$ (the pointwise
$\lim\sup$). We fix, for the time being, some $\left(  \gamma,x\right)
\in\mathbb{M\times K}$. Then, there exists some sequence $\left(  \delta
_{m}\right)  _{m\geq1}$ such that $\ \underset{m\rightarrow\infty}{\lim}%
\delta_{m}=0$ and $\underset{m\rightarrow\infty}{\lim}v^{\delta_{m}}\left(
\gamma,x\right)  =v^{\ast}\left(  \gamma,x\right)  .$ Due to Condition
\ref{NonExp}, the sequence $\left(  v^{\delta_{m}}\right)  _{m\geq1}$ is
equicontinuous (see Proposition \ref{PropUniformContModuli}) and, by
definition, it is also bounded. Then, using Arzel\`{a}-Ascoli Theorem, it
follows that some subsequence (still denoted $\left(  v^{\delta_{m}}\right)
_{m\geq1}$) converges uniformly on $\mathbb{M\times K}$ to some limit function
$v\in C\left(  \mathbb{M\times K};%
\mathbb{R}
\right)  .$ In particular, $w\left(  \gamma,x\right)  =v^{\ast}\left(
\gamma,x\right)  .$ Using Lemma \ref{EssentialLemma} (i), one gets that
\begin{equation}
\underset{t\rightarrow\infty}{\lim\inf}V_{t}\left(  \gamma,x\right)  \geq
v^{\ast}\left(  \gamma,x\right)  .\label{eq1}%
\end{equation}
Obviously, this argument can be repeated for every $\left(  \gamma,x\right)
\in\mathbb{M\times K}$. Let us now consider $w\in C\left(  \mathbb{M\times K};%
\mathbb{R}
\right)  $ to be an adherence point of the relatively compact family $\left(
v^{\delta}\right)  _{\delta>0}$. Then, using\ Lemma \ref{EssentialLemma} (ii),
one establishes the existence of some increasing sequence $\left(
t_{n}\right)  _{n\geq1}$ such that $\underset{n\rightarrow\infty}{\lim}%
t_{n}=\infty$ and
\[
\lim_{n\rightarrow\infty}\sup_{\gamma\in\mathbb{M},x\in%
\mathbb{R}
^{N}}\left\vert V_{t_{n}}\left(  \gamma,x\right)  -w\left(  \gamma,x\right)
\right\vert =0.
\]
In particular, it follows that
\begin{equation}
w\left(  \gamma,x\right)  \geq\underset{t\rightarrow\infty}{\lim\inf}%
V_{t}\left(  \gamma,x\right)  ,\label{eq2}%
\end{equation}
for all $\left(  \gamma,x\right)  \in\mathbb{M\times K}$. Combining
(\ref{eq0}),(\ref{eq1}) and (\ref{eq2}), one deduces that the unique adherence
point of $\left(  v^{\delta}\right)  _{\delta>0}$ is $v^{\ast}$. The
convergence of $\left(  V_{t}\right)  _{t>0}$ follows by invoking Lemma
\ref{EssentialLemma} (iii). Our Theorem is now complete.
\end{proof}

\section{Proof of the Second Main Result (Theorem \ref{ThMainFirst}%
)\label{Section6ProofMain2}}

The proof of Theorem \ref{ThMainFirst} is constructive and relies on four
steps. We begin with recalling the Hamilton-Jacobi integrodifferential systems
satisfied by the Abel-average functions and Krylov's shaking the coefficient
method. The first step is showing that the value functions $v^{\delta}$ can be
suitably approximated by using piecewise constant open-loop policies. The
proof in this part strongly rely on the tools presented before. The second
step is to interpret the system as a stochastic differential equation (SDE)
with respect to some random measure. The third step is to embed the solutions
of these SDE in a space of measures satisfying a suitable linear constraint
via the linear programming approach. To conclude, the fourth step provides a
constructive (pseudo-) coupling using SDE estimates.

\subsection{Krylov's Shaking the Coefficients\label{Subsection6.1Krylov}}

For every $\delta>0,$ the value function $v^{\delta}$ is known to be the
unique bounded, uniformly continuous viscosity solution of the Hamilton-Jacobi
integro-differential system%
\begin{equation}
\delta v^{\delta}\left(  \gamma,x\right)  +H\left(  \gamma,x,\partial
_{x}v^{\delta}\left(  \gamma,x\right)  ,v^{\delta}\right)  =0, \label{HJB}%
\end{equation}
where the Hamiltonian is defined by setting%
\begin{align*}
&  H\left(  \gamma,x,p,\varphi\right) \\
&  :=\underset{u\in\mathbb{U}}{\sup}\left[  -h\left(  \gamma,x,u\right)
-\left\langle f_{\gamma}\left(  x,u\right)  ,p\right\rangle -\lambda_{\gamma
}\left(  x,u\right)  \int_{\mathbb{M}}\left(  \varphi\left(  \theta
,x+g_{\gamma}\left(  \theta,x,u\right)  \right)  -\varphi\left(
\gamma,x\right)  \right)  Q^{0}\left(  \gamma,u,d\theta\right)  \right]  ,
\end{align*}
for all $x,p\in%
\mathbb{R}
^{N}$ and all bounded function $\varphi:\mathbb{M}\times%
\mathbb{R}
^{N}$ $\longrightarrow%
\mathbb{R}
.$ For further details on the subject, the reader is referred to
\cite{Soner86_2}.

Although uniformly continuous, the value functions $v^{\delta}$ are, in
general, not of class $C_{b}^{1}.$ However, adapting the method introduced in
\cite{krylov_00} (see also \cite{barles_jakobsen_02}), $v^{\delta}$ can be
seen as the supremum over regular subsolutions of the system (\ref{HJB}).
Alternatively, one can give a variational formulation of $v^{\delta}$ with
respect to an explicit set of constraints. We recall the following basic
elements taken from \cite{G7}.

We begin by perturbing the coefficients and consider an extended
characteristic triple

- $\overline{f}_{\gamma}:%
\mathbb{R}
^{N}\times\mathbb{U\times}\overline{B}\left(  0,1\right)  \longrightarrow%
\mathbb{R}
^{N},$ $f_{\gamma}\left(  x,u^{1},u^{2}\right)  =f_{\gamma}\left(
x+u^{2},u^{1}\right)  ,$ $u_{1}\in\mathbb{U},u_{2}\in\overline{B}\left(
0,1\right)  ,$ $\gamma\in\mathbb{M},$

- $\overline{\lambda}_{\gamma}:%
\mathbb{R}
^{N}\times\mathbb{U\times}\overline{B}\left(  0,1\right)  \longrightarrow%
\mathbb{R}
^{N},$ $\lambda_{\gamma}\left(  x,u^{1},u^{2}\right)  =\lambda_{\gamma}\left(
x+u^{2},u^{1}\right)  ,$ $u_{1}\in\mathbb{U},u_{2}\in\overline{B}\left(
0,1\right)  ,$ $\gamma\in\mathbb{M},$

- $\overline{Q}:%
\mathbb{R}
^{N}\times\mathbb{U\times}\overline{B}\left(  0,1\right)  \longrightarrow
\mathcal{P}\left(
\mathbb{R}
^{N}\right)  ,$ $\overline{Q}\left(  \gamma,x,u^{1},u^{2},A\right)  =Q\left(
\gamma,x+u^{2},u^{1},A+\left(  0,u^{2}\right)  \right)  ,$ where $A+\left(
0,u^{2}\right)  =\left\{  \left(  a_{1},a_{2}+u^{2}\right)  :\left(
a_{1},a_{2}\right)  \in A\right\}  ,$ for all $x\in%
\mathbb{R}
^{N}$, $u^{1}\in U$, $u^{2}\in\overline{B}\left(  0,1\right)  $ and all Borel
set $A\subset\mathbb{M\times}%
\mathbb{R}
^{N}.$

One can easily construct the process $\left(  \Gamma^{\gamma,x,u^{1},u^{2}%
},X^{\gamma,x,u^{1},u^{2}}\right)  $ with $u=\left(  u^{1},u^{2}\right)
\in\mathcal{A}_{ad}\left(  \mathbb{U\times}\overline{B}\left(  0,1\right)
\right)  $. The initial process associated to $\left(  f,\lambda,Q\right)  $
can be obtained by imposing $u^{2}=0.$ Let us note that, with this
construction,
\[
\overline{Q}\left(  \gamma,x,u^{1},u^{2},d\theta dy\right)  =\delta
_{x+g_{\gamma}\left(  \theta,x+u^{2},u^{1}\right)  }\left(  dy\right)
Q^{0}\left(  \gamma,u^{1},d\theta\right)  .
\]

\subsection{Step 1: Piecewise Constant Open-loop
Policies\label{Subsection6.2PiecewiseCstePolicies}}

The aim of this subsection is to show that the value functions $v^{\delta}$
can be approximated by functions in which the control processes are piecewise
(in time) constant. For Brownian diffusions, this type of result has been
proven in \cite{Krylov_step_99}. In this section we adapt the method of
\cite{Krylov_step_99} to our setting by hinting to the modifications whenever
necessary. Following \cite{Krylov_step_99}, for all $n\geq1,$ we introduce the
value function%
\[
v^{\delta,n}\left(  \gamma,x\right)  =\inf_{u\in\mathcal{A}_{ad}^{n}}%
\delta\mathbb{E}\left[  \int_{0}^{\infty}e^{-\delta t}h\left(  \Gamma
_{t}^{\gamma,x,u,0},X_{t}^{\gamma,x,u,0},u_{t}\right)  dt\right]  ,
\]
for all $\left(  \gamma,x\right)  \in\mathbb{M\times}%
\mathbb{R}
^{N}.$

The main result of the subsection is the following.

\begin{theorem}
\label{ThPiecewiseCtControl}Let us assume that there exists a compact, convex
set $\mathbb{K}$ which is invariant with respect to the controlled PDMP with
characteristics $\left(  f,\lambda,Q\right)  $. Then, for every $\delta>0,$
the value functions $v^{\delta,n}$ converge uniformly to $v^{\delta}$ as the
discretization step $n$ increases to infinity
\[
\lim_{n\rightarrow\infty}\sup_{\gamma\in\mathbb{M}\text{, }x\in\mathbb{K}%
}\left\vert v^{\delta}\left(  \gamma,x\right)  -v^{\delta,n}\left(
\gamma,x\right)  \right\vert =0.
\]

\end{theorem}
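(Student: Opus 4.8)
The plan is to combine the trivial inclusion of control classes with Krylov's shaking-the-coefficients device to produce smooth almost-optimal test functions, and then to run the dynamic programming principle of \cite{Soner86_2} along an explicitly constructed step policy. Since $\mathcal{A}_{ad}^{n}\subset\mathcal{A}_{ad}$, one has $v^{\delta}\leq v^{\delta,n}$ for every $n$, so the whole difficulty lies in the reverse one-sided estimate $v^{\delta,n}(\gamma,x)\leq v^{\delta}(\gamma,x)+\omega(n)$ with $\omega(n)\to 0$ uniformly on $\mathbb{M}\times\mathbb{K}$. To bound the restricted infimum $v^{\delta,n}$ from above it suffices to exhibit, for each $(\gamma,x)$, a policy in $\mathcal{A}_{ad}^{n}$ whose cost is close to $v^{\delta}(\gamma,x)$.

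First I would regularize $v^{\delta}$. Using the extended (shaken) characteristics $\overline{f},\overline{\lambda},\overline{Q}$ from Subsection~\ref{Subsection6.1Krylov}, let $v^{\delta,\varepsilon}$ be the value function of the problem in which an extra control $u^{2}\in\overline{B}(0,\varepsilon)$ shifts the state. The standard Lipschitz estimate for discounted PDMP value functions (see \cite{Soner86_2}), available from the regularity of $(f,\lambda,g,h)$ and the invariance of $\mathbb{K}$, yields $\sup_{\mathbb{M}\times\mathbb{K}}\left\vert v^{\delta,\varepsilon}-v^{\delta}\right\vert\leq C\varepsilon$, while the extremization over shifts turns $v^{\delta,\varepsilon}$ into a one-sided (viscosity) almost-solution of the original system (\ref{HJB}). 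Convolving in $x$ with a smooth kernel supported in $\overline{B}(0,\kappa)$, $\kappa\leq\varepsilon$, produces $\phi_{\varepsilon}\in C_{b}^{1}$; here the convexity of $\mathbb{K}$ (assumed in the statement) is what guarantees that the mollified function and the shaken trajectories stay in the region where the estimates hold, so that $\phi_{\varepsilon}$ is a classical almost-solution of (\ref{HJB}) up to a defect $\eta(\varepsilon)\to 0$, still satisfying $\sup_{\mathbb{M}\times\mathbb{K}}\left\vert\phi_{\varepsilon}-v^{\delta}\right\vert\leq C\varepsilon$. Because $\phi_{\varepsilon}\in C_{b}^{1}$ it lies in the domain of the extended generator, so Dynkin's formula applies to $e^{-\delta t}\phi_{\varepsilon}(\Gamma_{t},X_{t})$.

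Next I would build the step policy. For each $(\gamma,x)$ select, measurably, a control value $\widehat{u}(\gamma,x)\in\mathbb{U}$ attaining the pointwise optimum in $H(\gamma,x,\partial_{x}\phi_{\varepsilon},\phi_{\varepsilon})$ up to $\eta(\varepsilon)$. Define $u^{n}\in\mathcal{A}_{ad}^{n}$ cell by cell along the deterministic flow: on each inter-jump interval issued from a post-jump location $(\gamma,x)$, set the control equal to $\widehat{u}\bigl(\gamma,\Phi_{k/n}\bigr)$ on the elapsed-time cell $\left(\frac{k}{n},\frac{k+1}{n}\right]$, where $\Phi_{k/n}$ is the value at time $k/n$ of the flow generated by this very (already determined on earlier cells) control. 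Since $\Phi_{k/n}$ is a deterministic measurable function of the post-jump state, each piece $u^{k}$ depends only on $(\gamma,x)$ and the cell index, so $u^{n}$ indeed lies in $\mathcal{A}_{0}^{n}$ on each interval, hence in $\mathcal{A}_{ad}^{n}$. Applying Dynkin's formula to $e^{-\delta t}\phi_{\varepsilon}$ under $u^{n}$ and using the almost-solution inequality gives $\phi_{\varepsilon}(\gamma,x)\geq \mathrm{cost}(u^{n})-\eta(\varepsilon)-\mathcal{E}_{n}$, where $\mathcal{E}_{n}$ collects the error produced by freezing the control over each cell.

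Finally, I would estimate $\mathcal{E}_{n}$. On a single cell the frozen control value $\widehat{u}(\gamma,\Phi_{k/n})$ differs from the instantaneous optimizer only because the true state has drifted away from $\Phi_{k/n}$ by $O(1/n)$; combined with the Lipschitz continuity in $x$ of $f_{\gamma},\lambda_{\gamma},g_{\gamma}$ and $h$ and the $C_{b}^{1}$ regularity of $\phi_{\varepsilon}$, the integrand $\mathcal{L}^{u^{n}_{s}}\phi_{\varepsilon}-\delta\phi_{\varepsilon}+h$ stays within $O(1/n)$ of its optimal value, save on the event of a jump inside the cell, which has probability $O(1/n)$ and on which the contribution is bounded because everything is bounded. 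Summing these $O(1/n)$ cell errors against the discount weight $e^{-\delta t}$ gives $\mathcal{E}_{n}\leq C/n$, whence $v^{\delta,n}(\gamma,x)\leq\mathrm{cost}(u^{n})\leq\phi_{\varepsilon}(\gamma,x)+\eta(\varepsilon)+C/n\leq v^{\delta}(\gamma,x)+C\varepsilon+\eta(\varepsilon)+C/n$; letting $n\to\infty$ and then $\varepsilon\to 0$ yields the claim. The main obstacle is the discretization estimate for the nonlocal part of the generator: unlike the diffusive case of \cite{Krylov_step_99}, one must control the jump term in Dynkin's formula over each cell, accounting both for the event of a jump within a cell and for the Lipschitz dependence of the post-jump map $g_{\gamma}$ and of $\lambda_{\gamma}$ on the state. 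This, together with ensuring that the shaking and mollification keep trajectories inside the convex invariant set $\mathbb{K}$, is where the argument genuinely requires care.
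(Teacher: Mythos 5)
Your overall strategy inverts the pairing that makes Krylov's scheme work, and this creates a genuine gap at the verification step. Since the Hamiltonian in (\ref{HJB}) is a supremum of affine functionals of $\left(  \varphi,p\right)  $, hence convex, shaking the coefficients of the \emph{unrestricted} problem and mollifying yields only an approximate smooth \emph{subsolution}: every translate $v^{\delta,\varepsilon}\left(  \cdot-e\right)  ,$ $\left\vert e\right\vert \leq\varepsilon,$ is a viscosity subsolution of the original system, and a convex average of subsolutions of a convex equation remains a subsolution; the supersolution half of the viscosity-solution property of $v^{\delta}$ does \emph{not} survive mollification (Jensen's inequality goes the wrong way). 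Concretely, your $\phi_{\varepsilon}$ satisfies, for \emph{all} $u$, $\delta\phi_{\varepsilon}\leq\delta h\left(  \cdot,u\right)  +\mathcal{L}^{u}\phi_{\varepsilon}+\eta\left(  \varepsilon\right)  $, whereas your Dynkin computation along the frozen policy $u^{n}$ built from the selection $\widehat{u}$ requires the reverse inequality $\delta\phi_{\varepsilon}\geq\delta h\left(  \cdot,\widehat{u}\right)  +\mathcal{L}^{\widehat{u}}\phi_{\varepsilon}-\eta\left(  \varepsilon\right)  $ at the optimizer, i.e.\ an approximate \emph{supersolution} property. Without it, the chain $\phi_{\varepsilon}\left(  \gamma,x\right)  \geq\mathrm{cost}\left(  u^{n}\right)  -\eta\left(  \varepsilon\right)  -\mathcal{E}_{n}$ has no justification, and constructing near-optimal feedbacks from a merely uniformly continuous value function is precisely the difficulty Krylov's method is designed to bypass, not to solve.

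The paper applies the shaking-and-mollification machinery to the \emph{constrained} value $v^{\delta,n}$ instead: the dynamic programming principle for $v_{\varepsilon}^{\delta,n}$, written with a control held \emph{constant} on $\left[  0,T_{1}\wedge n^{-1}\right]  $ (an inequality $\leq$ valid for each fixed $\left(  u,y\right)  $, which is exactly the subsolution direction), combined with the translation identity $X^{\gamma,x-\varepsilon y,u^{1},\varepsilon u^{2}}=X^{\gamma,x,u^{1},0}-\varepsilon y$ and It\^{o}'s formula, shows that the mollification $v_{\left(  \varepsilon\right)  }^{\delta,n}$ is a classical approximate subsolution of the full HJB system with defect $\widetilde{C}^{\delta}\bigl(  n^{-1}\left(  1+\varepsilon^{-1}\omega^{\delta}\left(  \varepsilon\right)  \right)  +\varepsilon^{-1}\omega^{\delta}\left(  \left\vert f\right\vert _{1}/n\right)  \bigr)  $; the verification argument is then run along \emph{arbitrary} controls in $\mathcal{A}_{ad}$, giving $v^{\delta}\geq v^{\delta,n}-\mathrm{err}$, the only nontrivial inequality. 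Note also that the defect necessarily carries $\varepsilon^{-1}$ factors (the gradient of the mollification is only $O\left(  \varepsilon^{-1}\omega^{\delta}\left(  \varepsilon\right)  \right)  $-Lipschitz), so your uniform bound $\mathcal{E}_{n}\leq C/n$ with $C$ independent of $\varepsilon$ is too optimistic and the two parameters must be coupled, as in the paper's choice $\varepsilon=\left(  \omega^{\delta}\left(  \left\vert f\right\vert _{1}/n\right)  \right)  ^{1-\eta}$ — but this is secondary to the subsolution/supersolution mismatch, which is where your argument breaks. (Your reading of where convexity of $\mathbb{K}$ enters is also slightly off: the paper uses it to define the projection $\Pi_{\mathbb{K}}$ in the localization of the dynamics, not to confine mollified trajectories.)
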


The proof relies on the same arguments as those developed in
\cite{Krylov_step_99} combined with dynamic programming principles. Let us
briefly explain the approach. For every $n\geq1$, one begins by proving a
dynamic programming principle for $v^{\delta,n}$ and involving $T\wedge T_{1}$
as intermediate time, for $T\in n^{-1}\mathbb{%
\mathbb{N}
}$. The arguments are essentially the same as those in \cite{Soner86_2} and we
only specify when the structure of $\mathcal{A}_{ad}^{n}$ intervenes. Next,
one takes a sequence of smooth functions $\left(  v_{\left(  \varepsilon
\right)  }^{\delta,n}\right)  _{\varepsilon>0}$ converging uniformly to
$v^{\delta,n}$ by adapting to the present framework Krylov's shaking of
coefficients method introduced in \cite{krylov_00} (see also
\cite{barles_jakobsen_02} or \cite{G8} for the PDMP case). Then, one proceeds
by writing the Hamilton-Jacobi integrodifferential system satisfied by
$v_{\left(  \varepsilon\right)  }^{\delta,n}$. This equation is $\varepsilon
-$close to the one satisfied by $v^{\delta}$ (with a uniform behavior w.r.t.
$n\geq1$)$.$ Our assertion follows by integrating this subsolution condition
with respect to the law of the piecewise deterministic Markov process then
allowing $\varepsilon\rightarrow0$. For our reader's convenience, we have
indicated the main modifications and arguments in the Appendix.

\begin{remark}
\label{RemarkNonExpCtCtrl}If the invariance condition holds true, then, by
applying this result, one only needs to check that the nonexpansive Condition
\ref{NonExp} holds true for all $u\in\mathcal{A}_{ad}^{n}\left(
\mathbb{U}\right)  $ for all $n$ large enough (larger than some
$n_{\varepsilon}$)$.$
\end{remark}

\subsection{Step 2 : Associated Random Measures and Stochastic Differential
Equations\label{Subsection6.3RandomMeasures}}

Let us fix $\gamma_{0}\in\mathbb{M},$ $x_{0}\in\mathbb{R}^{N}$ and $\left(
u,v\right)  \in\mathcal{A}_{ad}\left(  U\times V\right)  .$ The following
construction is quite standard and makes the object of \cite[Section
26]{davis_93} for more general PDMP (without control) and \cite[Section
41]{davis_93} (when control is present).\ We let $S_{0}=T_{0}=0$, $S_{n}%
=T_{n}-T_{n-1},$ $\ $for all $n\geq1$ and $\xi_{n}=\left(  S_{n},\gamma
_{T_{n}}^{\gamma_{0},x_{0},u,v},X_{T_{n}}^{\gamma_{0},x_{0},u,v}\right)  .$ We
look at the process $\left(  \gamma,X\right)  $ under $\mathbb{P}^{\gamma
_{0},x_{0},u,v}$ (which depends on both the initial state $\left(  \gamma
_{0},x_{0}\right)  $ and the control couple $\left(  u,v\right)  $, but,
having fixed these elements and for notation purposes, this dependency will be
dropped)$.$ By abuse of notation, we let
\[
u_{s}:=u_{1}\left(  \gamma_{0},x_{0},s\right)  1_{0\leq s\leq T_{1}}%
+\sum_{n\geq1}u_{n+1}\left(  \gamma_{T_{n}}^{\gamma_{0},x_{0},u,v},X_{T_{n}%
}^{\gamma_{0},x_{0},u,v},s-T_{n}\right)  1_{T_{n}<s\leq T_{n+1}},
\]
(and similar for $v$). We denote by $\mathbb{F}$ the filtration $\left(
\mathcal{F}_{\left[  0,t\right]  }:=\sigma\left\{  \left(  \gamma_{r}%
^{\gamma_{0},x_{0},u,v},X_{r}^{\gamma_{0},x_{0},u,v}\right)  :r\in\left[
0,t\right]  \right\}  \right)  _{t\geq0}.$ The predictable $\sigma$-algebra
will be denoted by $\mathcal{P}$ and the progressive $\sigma$-algebra by
$Prog.$ For the general structure of predictable processes, the reader is
referred to \cite[Section 26]{davis_93}, \cite[Proposition 4.2.1]{Jacobsen} or
\cite[Appendix A2, Theorem T34]{Bremaud_1981}. In particular, due to the
previous notations, it follows that $u$ and $v$ are predictable.

As usual, we introduce the random measure $\overline{p}$ on $\Omega
\times\left(  0,\infty\right)  \times\mathbb{M\times%
\mathbb{R}
}^{N}$ by setting%
\[
\overline{p}\left(  \omega,A\right)  =\sum_{k\geq1}1_{\left(  T_{k}\left(
\omega\right)  ,\left(  \gamma_{T_{k}}^{\gamma_{0},x_{0},u,v},X_{T_{k}%
}^{\gamma_{0},x_{0},u,v}\right)  \left(  \omega\right)  \right)  \in A},\text{
for all }\omega\in\Omega,\text{ }A\in\mathcal{B}\left(  0,\infty\right)
\times\mathcal{B}\left(  \mathbb{M\times%
\mathbb{R}
}^{N}\right)  .
\]
The compensator of $\overline{p}$ is
\[
\widehat{\overline{p}}\left(  dsdyd\theta\right)  =\lambda\left(  \gamma
_{s-}^{\gamma_{0},x_{0},u,v},u_{s}\right)  \delta_{X_{s-}^{\gamma_{0}%
,x_{0},u,v}+g_{\gamma_{s-}^{\gamma_{0},x_{0},u,v}}\left(  \theta
,X_{s-}^{\gamma_{0},x_{0},u,v},u_{s},v_{s}\right)  }\left(  dy\right)
Q^{0}\left(  \gamma_{s-}^{\gamma_{0},x_{0},u,v},u_{s},d\theta\right)  ds.
\]
and the compensated martingale measure (see \cite[Proposition 26.7]{davis_93})
is given by $\overline{q}:=\overline{p}-\widehat{\overline{p}}.$

By construction, for our model, on $\left[  T_{n-1},T_{n}\right)  ,$
$X_{t}^{\gamma_{0},x_{0},u,v}$ is a deterministic function of $X_{T_{n-1}%
}^{\gamma_{0},x_{0},u,v},$ $\gamma_{T_{n-1}}^{\gamma_{0},x_{0},u,v},$
$u_{n}\left(  X_{T_{n-1}}^{\gamma_{0},x_{0},u,v},\gamma_{T_{n-1}}^{\gamma
_{0},x_{0},u,v},\cdot-T_{n-1}\right)  $ and $v_{n}\left(  X_{T_{n-1}}%
^{\gamma_{0},x_{0},u,v},\gamma_{T_{n-1}}^{\gamma_{0},x_{0},u,v},\cdot
-T_{n-1}\right)  $. In this particular framework,
\begin{align*}
&  X_{T_{n}}^{\gamma_{0},x_{0},u,v}=\Phi_{T_{n}-T_{n-1}}^{0,X_{T_{n-1}%
}^{\gamma_{0},x_{0},u,v},u_{n}\left(  \gamma_{T_{n-1}}^{\gamma_{0},x_{0}%
,u,v},X_{T_{n-1}}^{\gamma_{0},x_{0},u,v},\cdot\right)  ,v_{n}\left(
\gamma_{T_{n-1}}^{\gamma_{0},x_{0},u,v},X_{T_{n-1}}^{\gamma_{0},x_{0}%
,u,v},\cdot\right)  ;\gamma_{T_{n-1}}^{\gamma_{0},x_{0},u,v}}\\
&  \text{ \ \ \ }+g_{\gamma_{T_{n-1}}^{\gamma_{0},x_{0},u,v}}\left(
\gamma_{T_{n}}^{\gamma_{0},x_{0},u,v},u_{n}\left(  \gamma_{T_{n-1}}%
^{\gamma_{0},x_{0},u,v},X_{T_{n-1}}^{\gamma_{0},x_{0},u,v},T_{n}%
-T_{n-1}\right)  ,v_{n}\left(  \gamma_{T_{n-1}}^{\gamma_{0},x_{0}%
,u,v},X_{T_{n-1}}^{\gamma_{0},x_{0},u,v},T_{n}-T_{n-1}\right)  \right)  ,
\end{align*}
hence being a a deterministic function of $S_{n},X_{T_{n-1}}^{\gamma_{0}%
,x_{0},u,v},\gamma_{T_{n-1}}^{\gamma_{0},x_{0},u,v}.$ It follows that the
filtration $\mathbb{F}$ is actually generated by the marked point process
$\left(  T_{k},\gamma_{T_{k}}^{\gamma_{0},x_{0},u,v}\right)  _{k\geq0}$. As a
consequence, $v_{n+1}=v_{n+1}\left(  X_{T_{n}}^{\gamma_{0},x_{0},u,v}%
,\gamma_{T_{n}}^{\gamma_{0},x_{0},u,v},\cdot\right)  $ is a deterministic
function of $T_{1},...,T_{n},\gamma_{T_{1}}^{\gamma_{0},x_{0},u,v}%
,...,\gamma_{T_{n}}^{\gamma_{0},x_{0},u,v}$ still denoted by $v_{n+1}\left(
T_{1},...,T_{n},\gamma_{T_{1}}^{\gamma_{0},x_{0},u,v},...,\gamma_{T_{n}%
}^{\gamma_{0},x_{0},u,v},\cdot\right)  .$ In the case when $\left(
u,v\right)  \in\mathcal{A}_{ad}^{m}\left(  U\times V\right)  $ for some
$m\geq1$ are piecewise constant, $v_{n+1}$ is of type
\[
\sum_{k\geq0}v_{n+1}^{k}\left(  T_{1},...,T_{n},\gamma_{T_{1}}^{\gamma
_{0},x_{0},u,v},...,\gamma_{T_{n}}^{\gamma_{0},x_{0},u,v}\right)  1_{\left(
\frac{k}{m},\frac{k+1}{m}\right]  }\left(  t\right)  .
\]
Similar assertion hold true for $u.$

We now define the random measure $p$ on $\Omega\times\left(  0,\infty\right)
\times\mathbb{M}$ by setting%
\[
p\left(  \omega,A\right)  =\overline{p}\left(  \omega,A\times\mathbb{%
\mathbb{R}
}^{N}\right)  ,\text{ for all }\omega\in\Omega,\text{ }A\in\mathcal{B}\left(
0,\infty\right)  \times\mathcal{B}\left(  \mathbb{M}\right)  .
\]
The properties of $\overline{p}$ imply that the compensator of $p$ is
\[
\widehat{p}\left(  dsd\theta\right)  =\lambda\left(  \gamma_{s-}^{\gamma
_{0},x_{0},u,v},u_{s}\right)  Q^{0}\left(  \gamma_{s-}^{\gamma_{0},x_{0}%
,u,v},u_{s},d\theta\right)  ds
\]
and
\[
q\left(  dsd\theta\right)  =p\left(  dsd\theta\right)  -\lambda\left(
\gamma_{s-}^{\gamma_{0},x_{0},u,v},u_{s}\right)  Q^{0}\left(  \gamma
_{s-}^{\gamma_{0},x_{0},u,v},u_{s},d\theta\right)  ds
\]
is its martingale measure. Following the general theory of integration with
respect to random measures (see, for example \cite{Ikeda_Watanabe_1981}), the
second state component can be identified with the unique solution of the
stochastic differential equation (SDE)%
\[
\left\{
\begin{array}
[c]{l}%
dX_{t}^{\gamma_{0},x_{0},u,v}=f_{\gamma_{t}^{\gamma_{0},x_{0},u,v}}\left(
X_{t}^{\gamma_{0},x_{0},u,v},u_{t},v_{t}\right)  dt+\int_{\mathbb{M}}%
g_{\gamma_{t-}^{\gamma_{0},x_{0},u,v}}\left(  \theta,X_{t-}^{\gamma_{0}%
,x_{0},u,v},u_{t},v_{t}\right)  p\left(  dtd\theta\right)  ,t\geq0,\\
X_{t}^{\gamma_{0},x_{0},u,v}=x_{0},\text{ }\mathbb{P}-a.s.
\end{array}
\right.
\]

\subsection{Step 3 : Measure Embedding of
Solutions\label{Subsection6.4MeasureEmbedding}}

More general, whenever $w$ is an $\mathbb{F}$-predictable process, we can
consider the equation%
\[
\left\{
\begin{array}
[c]{l}%
dY_{t}^{y_{0},w}=f_{\gamma_{t}^{\gamma_{0},x_{0},u,v}}\left(  Y_{t}^{y_{0}%
,w},u_{t},w_{t}\right)  dt+\int_{\mathbb{M}}g_{\gamma_{t-}^{\gamma_{0}%
,x_{0},u,v}}\left(  \theta,Y_{t-}^{y_{0},w},u_{t},w_{t}\right)  p\left(
dtd\theta\right)  ,t\geq0,\\
Y_{0}^{y_{0},w}=y_{0},\text{ }\mathbb{P}-a.s.
\end{array}
\right.
\]
The assumptions on the coefficients $f$ and $g$ guarantee that, for every
$y_{0}\in%
\mathbb{R}
^{N}$ and every predictable, $V-$valued process $w,$ this equation admits a
unique solution $Y^{y_{0},w}$. We fix $\delta>0$ and consider some (arbitrary)
regular test function $\phi\in C_{b}^{1}\left(  \mathbb{M\times}%
\mathbb{R}
^{N};%
\mathbb{R}
\right)  .$ It\^{o}'s formula (see \cite[Chapter II, Theorem 5.1]%
{Ikeda_Watanabe_1981}) applied to $\delta e^{-\delta\cdot}\phi\left(
\gamma_{\cdot}^{\gamma_{0},x_{0},u,v},Y_{\cdot}^{y_{0},w}\right)  $ on
$\left[  0,T\right]  $ yields%
\begin{align*}
&  \delta e^{-\delta T}\mathbb{E}\left[  \phi\left(  \gamma_{T}^{\gamma
_{0},x_{0},u,v},Y_{T}^{y_{0},w}\right)  \right] \\
&  =\delta\phi\left(  \gamma_{0},y_{0}\right) \\
&  +\mathbb{E}\left[  \int_{0}^{T}\delta e^{-\delta t}\left(
\begin{array}
[c]{c}%
-\delta\phi\left(  \gamma_{t},Y_{t}\right)  +\left\langle f_{\gamma_{t}%
}\left(  Y_{t},u_{t},w_{t}\right)  ,\partial_{x}\phi\left(  \gamma_{t}%
,Y_{t}\right)  \right\rangle \\
+\lambda\left(  \gamma_{t},u_{t}\right)  \int_{\mathbb{M}}\left(  \phi\left(
\theta,Y_{t}+g_{\gamma_{t}}\left(  \theta,Y_{t},u_{t},w_{t}\right)  \right)
-\phi\left(  \gamma_{t},Y_{t}\right)  \right)  Q^{0}\left(  \gamma_{t}%
,u_{t},d\theta\right)
\end{array}
\right)  dt\right]  ,
\end{align*}
where we have denoted by $\left(  \gamma_{t},Y_{t}\right)  =\left(  \gamma
_{t}^{\gamma_{0},x_{0},u,v},Y_{t}^{y_{0},w}\right)  .$ By letting
$T\rightarrow\infty$, it follows that the occupation measure $\mu^{y_{0},w}%
\in\mathcal{P}\left(  \mathbb{M\times}%
\mathbb{R}
^{N}\times U\times V\right)  $ given by
\[
\mu^{y_{0},w}\left(  A\right)  =\mathbb{E}\left[  \int_{0}^{\infty}\delta
e^{-\delta t}1_{A}\left(  \gamma_{t}^{\gamma_{0},x_{0},u,v},Y_{t}^{y_{0}%
,w},u_{t},w_{t}\right)  dt\right]  ,\text{ for }A\in\mathcal{B}\left(
\mathbb{M\times}%
\mathbb{R}
^{N}\times U\times V\right)
\]
satisfies%
\[
\int_{\mathbb{M\times}\mathcal{%
\mathbb{R}
}^{N}\times\mathbb{U}}\left(  \mathcal{L}^{u,v}\phi\left(  \theta,y\right)
+\delta\left(  \phi(\gamma,x)-\phi\left(  \theta,y\right)  \right)  \right)
\mu\left(  d\theta,dy,du,dv\right)  =0.
\]
We recall that $\mathcal{L}^{u,v}$ is the generator given by (\ref{Luv}).

There is no reason for the couple $\left(  \gamma^{\gamma_{0},x_{0}%
,u,v},Y^{y_{0},w}\right)  $ to be associated to a $U\times V$-valued piecewise
open-loop control couple. Nevertheless, the previous arguments show that the
occupation measure $\mu^{y_{0},w}$ belongs to $\Theta^{\delta}\left(
\gamma_{0},y_{0}\right)  $ (see (\ref{Thetadelta})).

\begin{remark}
\label{RemInvarianceY}Let us note that if there exists a set $\mathbb{K}$
invariant with respect to the PDMP driven by $\left(  f,\lambda,Q\right)  ,$
then, for all $\gamma_{0}\in\mathbb{M},$ $y_{0}\in\mathbb{K}$, the occupation
measures $\mu\in\Theta_{0}^{\delta}\left(  \gamma_{0},y_{0}\right)  $ satisfy
the support condition $\mu\left(  \mathbb{M}\times\mathbb{K\times}U\times
V\right)  $ $=1$. Then, by Theorem \ref{ThLP}, the same holds true for
$\Theta^{\delta}\left(  \gamma_{0},y_{0}\right)  $ and, hence, $Y^{y_{0},w}$
takes its values in $\mathbb{K}$. Alternatively, one can use \cite[Theorem 2.8
(ii)]{G8}.
\end{remark}

\subsection{Step 4 : Coupling via the Random
Measure\label{Subsection6.5Coupling}}

As in the previous arguments, one can define a measure $\mu\in\mathcal{P}%
\left(  \left(  \mathbb{M}\times\mathbb{R}^{N}\times U\times V\right)
^{2}\right)  $ by setting%
\[
\mu\left(  A\times B\right)  =\mathbb{E}\left[  \int_{0}^{\infty}\delta
e^{-\delta t}1_{A}\left(  \gamma_{t}^{\gamma_{0},x_{0},u,v},X_{t}^{\gamma
_{0},x_{0},u,v},u_{t},v_{t}\right)  1_{B}\left(  \gamma_{t}^{\gamma_{0}%
,x_{0},u,v},Y_{t}^{y_{0},w},u_{t},w_{t}\right)  dt\right]  ,
\]
whenever $A\in\mathcal{B}\left(  \left(  \mathbb{M}\times\mathbb{R}^{N}\times
U\times V\right)  ^{2}\right)  .$ It is clear that
\begin{align*}
&  \underset{\left(  \mathbb{M}\times%
\mathbb{R}
^{N}\times\mathbb{U}\right)  ^{2}}{\int}\left\vert h\left(  \theta,z,w\right)
-h\left(  \theta^{\prime},z^{\prime},w^{\prime}\right)  \right\vert \mu\left(
d\theta,dz,dw,d\theta^{\prime},dz^{\prime},dw^{\prime}\right)  \\
&  =\mathbb{E}\left[  \int_{0}^{\infty}\delta e^{-\delta t}\left\vert h\left(
\gamma_{t}^{\gamma_{0},x_{0},u,v},X_{t}^{\gamma_{0},x_{0},u,v},u_{t}%
,v_{t}\right)  -h\left(  \gamma_{t}^{\gamma_{0},x_{0},u,v},Y_{t}^{y_{0}%
,w},u_{t},w_{t}\right)  \right\vert dt\right]  ,
\end{align*}
$\mu\left(  A\times\left(  \mathbb{M}\times\mathbb{R}^{N}\times U\times
V\right)  \right)  =\mu_{\gamma_{0},x_{0,}u,v}^{\delta}\in\Theta_{0}^{\delta
}\left(  \gamma_{0},x_{0,}\right)  $ and $\mu\left(  \left(  \mathbb{M}%
\times\mathbb{R}^{N}\times U\times V\right)  \times B\right)  =\mu^{y_{0}%
,w}\in\Theta^{\delta}\left(  \gamma_{0},y_{0}\right)  ,$ where $\mu^{y_{0},w}$
given in the previous arguments. Convenient estimates for this integral term
imply the condition (\ref{NonExp}) and, hence, the results on existence of a
limit value function. In fact (see Remark \ref{RemarkNonExpCtCtrl}), in order
to prove Theorem \ref{ThMainFirst}, it suffices to provide good estimates when
the process is constructed with piecewise constant (in time) policies $\left(
u,v\right)  \in\mathcal{A}_{ad}^{n}\left(  U\times V\right)  .$ This is done
by the following.

\begin{lemma}
We assume Condition \ref{NonExpCondition} to hold true. Moreover, we assume
that there exists a compact set $\mathbb{K}$ invariant with respect to the
PDMP governed by $\left(  f,\lambda,Q\right)  $. Then, there exists $\omega:%
\mathbb{R}
_{+}\longrightarrow%
\mathbb{R}
_{+}$ such that $lim_{\varepsilon\rightarrow0}\omega\left(  \varepsilon
\right)  =0$ and, for every $n\geq1$ and every $\left(  u,v\right)
\in\mathcal{A}_{ad}^{n}\left(  U\times V\right)  ,$ there exists $w$
predictable with respect to the filtration $\mathbb{F}^{\gamma_{0},x_{0},u,v}$
such that
\[
\mathbb{E}\left[  \int_{0}^{\infty}\delta e^{-\delta t}\left\vert h\left(
\gamma_{t}^{\gamma_{0},x_{0},u,v},X_{t}^{\gamma_{0},x_{0},u,v},u_{t}%
,v_{t}\right)  -h\left(  \gamma_{t}^{\gamma_{0},x_{0},u,v},Y_{t}^{y_{0}%
,w},u_{t},w_{t}\right)  \right\vert dt\right]  \leq\omega\left(
n^{-1}\right)  .
\]

\end{lemma}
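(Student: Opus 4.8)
The plan is to exploit the structural feature of the dynamics of Subsection \ref{Subsection3.3ExplicitNonexp}: since the rate $\lambda_\gamma(u)$ and the mode kernel $Q^0(\gamma,u,\cdot)$ depend neither on the continuous component nor on $v$, the jump times $(T_k)_k$, the mode trajectory $(\gamma_{T_k})_k$ and the driving random measure $p$ built from $(u,v)$ in Subsection \ref{Subsection6.3RandomMeasures} are \emph{shared} by every solution of the $Y$-equation of Subsection \ref{Subsection6.4MeasureEmbedding} driven by the same $p$ and the same $u$. Hence one couples the reference trajectory $X^{\gamma_0,x_0,u,v}$ (from $x_0$) with $Y^{y_0,w}$ (from $y_0$) along a common mode skeleton, the whole coupling being encoded in the choice of the $V$-valued control $w$. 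By Remark \ref{RemarkNonExpCtCtrl} it suffices to treat $(u,v)\in\mathcal{A}_{ad}^n(U\times V)$, and writing $D_t:=\left\vert X_t^{\gamma_0,x_0,u,v}-Y_t^{y_0,w}\right\vert$, the problem reduces to producing a predictable $w$ keeping $D_t$ close to $\left\vert x_0-y_0\right\vert$; the third line of Condition \ref{NonExpCondition} then converts this into the cost estimate.

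First I would construct $w$ by measurable selection. For fixed $(\gamma,x,y,u,v)$ the map $w\mapsto\max\{\dots\}$ of Condition \ref{NonExpCondition} is continuous (the coefficients are uniformly continuous in the control) and $V$ is compact, so the infimum is attained and the zero-level minimizer set is nonempty and closed; a measurable selection theorem (Kuratowski--Ryll-Nardzewski) then yields a Borel map $(\gamma,x,y,u,v)\mapsto w^{\ast}$ realizing $\max\{\dots\}\leq0$. I would \emph{freeze} the control on the stochastic mesh obtained by refining the deterministic grid $\{k/n\}_{k\geq0}$ with the jump times: on each cell $[a,b)$, where $u,v$ are constant and $\gamma$ is frozen, set $w_t:=w^{\ast}(\gamma_a,X_a,Y_a,u,v)$. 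Since $X_a,Y_a$ are $\mathcal F_{a}$-measurable and $f_\gamma$ is Lipschitz in $x$, the $Y$-ODE has a unique solution on each cell, so $w$ is, by this cell-by-cell induction, a well-defined piecewise constant, hence predictable, process. I expect this to be the genuinely delicate point: establishing the joint measurability of the pathwise construction along the \emph{random} mesh and checking that the predictable value $w_{T_k}$ entering a jump is the one frozen on the preceding cell.

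With $w$ in hand the estimate is a controlled accumulation on $D_t^2$. On a flow cell $[a,b)$ of length $\ell\leq n^{-1}$ one has $\frac{d}{dt}\tfrac12 D_t^2=\langle X_t-Y_t,f_\gamma(X_t,u,v)-f_\gamma(Y_t,u,w)\rangle=:\psi(t)$; the selection gives $\psi(a)\leq0$, while $\left\vert\psi(t)-\psi(a)\right\vert\leq C(t-a)$ because $X,Y$ move by at most $f_{\max}(t-a)$ and $f_\gamma$ is bounded and Lipschitz on the invariant compact $\mathbb K$ (Remark \ref{RemInvarianceY} ensures $Y_t\in\mathbb K$). Integrating gives an increment of $D^2$ at most $C\ell^2\leq C\ell/n$ per flow cell. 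At a jump $T_k$ the jump line of Condition \ref{NonExpCondition}, selected at the cell endpoint, together with the $O(n^{-1})$ drift of $X,Y$ inside the cell (Lipschitzness of $x\mapsto x+g_\gamma(\theta,x,\cdot)$ uniformly in $\theta$), yields $D_{T_k}\leq D_{T_k-}+C/n$, i.e. an increment of $D^2$ at most $C/n$. Summing the flow increments (whose lengths add up to $T$, so $\sum\ell^2\leq T/n$) and adding $C/n$ per jump, then using $\mathbb E[N_T]\leq\lambda_{\max}T$ for the expected number $N_T$ of jumps on $[0,T]$, I obtain $\mathbb E[D_T^2]\leq\left\vert x_0-y_0\right\vert^2+CT/n$, whence $\mathbb E[D_t]\leq\left\vert x_0-y_0\right\vert+\sqrt{Ct/n}$.

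Finally, the third line of Condition \ref{NonExpCondition} together with the same $O(n^{-1})$ freezing bound for $h$ gives $\left\vert h(\gamma_t,X_t,u_t,v_t)-h(\gamma_t,Y_t,u_t,w_t)\right\vert\leq Lip(h)\,D_t+C/n$ pathwise; integrating against $\delta e^{-\delta t}$ and using $\int_0^\infty\delta e^{-\delta t}\sqrt{t}\,dt=\tfrac{\sqrt\pi}{2}\,\delta^{-1/2}$ produces the bound $Lip(h)\left\vert x_0-y_0\right\vert+\omega(n^{-1})$ with $\omega(n^{-1})=Lip(h)\,C\,(n\delta)^{-1/2}+C/n$, which tends to $0$ as $n\to\infty$ for the fixed $\delta$. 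This is exactly what is needed to verify Condition \ref{NonExp}.iii through the coupling $\mu$ of Subsection \ref{Subsection6.5Coupling}. The main obstacle is, as flagged, turning the pointwise selection into a bona fide predictable $w$ while keeping the freezing error uniform in $(u,v)$, $n$ and the initial data; once this is in place, the accumulation estimate above is routine.
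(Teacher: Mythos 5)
Your proof is correct, and its core is the same as the paper's: the paper's Step 0 is precisely your measurable selection (it defines the set-valued map $\Xi(\gamma,x,y,u,v)$ of controls $w$ realizing the three inequalities, checks compact values and upper semicontinuity, and selects measurably --- your Kuratowski--Ryll-Nardzewski route is equivalent), and its Steps 1--2 perform exactly your freezing of $\widehat{w}$ on the $1/n$-grid restarted at each jump time, with the same three per-cell estimates: $\langle f_{\gamma}(X,u,v)-f_{\gamma}(Y,u,w),X-Y\rangle\leq c/n$ on flow cells, $D_{T_k}\leq D_{T_k-}+c/n$ at jumps via the $g$-line plus the Lipschitz jump map, and the $h$-line conversion $\vert h(\cdot,X,\cdot)-h(\cdot,Y,\cdot)\vert\leq Lip(h)D_t+c/n$; the paper also makes your flagged left-continuity point explicitly ($u_{T_1}$ is the value frozen before the jump). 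One caveat on the target: the lemma's display omits the term $Lip(h)\vert x_0-y_0\vert$, but the paper's own proof ends with the bound $Lip(h)\vert x_0-y_0\vert+\omega(n^{-1})$, which is what Condition \ref{NonExp}.iii requires and what you prove, so you match the intended statement. Where you genuinely diverge is the final aggregation of errors. The paper keeps pathwise control $\vert X_{T_i}-Y_{T_i}\vert^2\leq\vert x_0-y_0\vert^2+c[T_i+i(4k_0+1)]/n$ only up to the $m$-th jump and a deterministic horizon $T$, pays tail penalties $2h_{\max}e^{-\delta T}+2h_{\max}\mathbb{E}[e^{-\delta T_m}]$ via the Soner-type bound $\mathbb{E}[e^{-\delta T_m}]\leq\left(\lambda_{\max}/(\delta+\lambda_{\max})\right)^{m}$, and optimizes $T=m=n^{1/4}$, yielding roughly an $n^{-1/4}$ rate. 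You instead run the construction on all of $[0,\infty)$ --- legitimate, since $\lambda\leq\lambda_{\max}$ forces $T_m\rightarrow\infty$ a.s., so the jump-refined mesh is locally finite and the cell-by-cell induction exhausts the half-line --- and absorb the random jump count in expectation through the compensator, $\mathbb{E}[N_t]\leq\lambda_{\max}t$, getting $\mathbb{E}[D_t^2]\leq\vert x_0-y_0\vert^2+Ct/n$ and, after Tonelli and $\int_0^{\infty}\delta e^{-\delta t}\sqrt{t}\,dt=\frac{\sqrt{\pi}}{2}\delta^{-1/2}$, the sharper $\omega(n^{-1})=O((n\delta)^{-1/2}+n^{-1})$. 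Your route buys a cleaner argument (no truncation, no tuning of $T,m$) and a better rate; the paper's truncation buys almost-sure rather than in-expectation control, which is dispensable here since only the expected discounted cost matters. Both versions of $\omega$ depend on $\delta$, which is harmless given how the lemma is used for fixed $\delta$ (Remark \ref{RemarkNonExpCtCtrl}). Two points to write out carefully, both already present in the paper's bookkeeping: the jump increment on $D^2$ uses $D\leq 2k_0$ on the invariant $\mathbb{K}$ (Remark \ref{RemInvarianceY}), which is where the $4k_0$-type constant enters, and the grid must indeed restart at each jump time so that $(u,v)\in\mathcal{A}_{ad}^{n}$ are constant on each cell, since the policies' clocks reset at jumps.
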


\begin{proof}
\underline{Step 0.}

Let us define a set-valued function%
\begin{align*}
&  \mathbb{M}\times\mathbb{K}^{2}\times U\times V\ni\left(  \gamma
,x,y,u,v\right)  \rightsquigarrow\Xi\left(  \gamma,x,y,u,v\right) \\
&  :=\left\{
\begin{array}
[c]{c}%
w\in V:\text{ }\forall\theta\in\mathbb{M}\text{,}\\%
\begin{array}
[c]{l}%
\left\langle f_{\gamma}\left(  x,u,v\right)  -f_{\gamma}\left(  y,u,w\right)
,x-y\right\rangle \leq0,\\
\left\vert x+g_{\gamma}\left(  \theta,x,u,v\right)  -y-g_{\gamma}\left(
\theta,y,u,w\right)  \right\vert \leq\left\vert x-y\right\vert ,\text{ }\\
\left\vert h\left(  \gamma,x,u,v\right)  -h\left(  \gamma,y,u,w\right)
\right\vert \leq Lip\left(  h\right)  \left\vert x-y\right\vert ,
\end{array}
\end{array}
\right\}  .
\end{align*}
One easily checks that the function has compact values and is upper
semicontinuous. Hence, there exists some measurable selection
\[
\widehat{w}:\mathbb{M}\times\mathbb{K}^{2}\times U\times V\longrightarrow
V,\text{ }\widehat{w}\left(  \gamma,x,y,u,v\right)  \in\Xi\left(
\gamma,x,y,u,v\right)  ,
\]
for all $\left(  \gamma,x,y,u,v\right)  \in\mathbb{M}\times\mathbb{K}%
^{2}\times U\times V.$ For further details, the reader is referred to
\cite[Subsection 9.2]{aubin_frankowska_90}.

We construct an $\mathbb{F}$-predictable $V$-valued control process $w$ as
follows. We begin by fixing $T>0$ (depending on $n$) and $m\geq1$ (depending
on $n$). The choice of $T$ and $m$ will be made explicit later on. Moreover,
we assume that $\mathbb{K\subset}\overline{B}\left(  0,k_{0}\right)  $, for
some $k_{0}>0.$

\underline{Step 1.} \ We consider%
\[
w_{s}^{1,0}:=\widehat{w}\left(  \gamma_{0},x_{0},y_{0},u_{1}\left(  0\right)
,v_{1}\left(  0\right)  \right)  =w_{0}^{1,0},\text{ }s\geq0,
\]
where we have denoted, by abuse of notation,%
\[
u_{1}\left(  s\right)  =u_{1}\left(  \gamma_{0},x_{0},s\right)  ,\text{ }%
v_{1}\left(  s\right)  =v_{1}\left(  \gamma_{0},x_{0},s\right)  .
\]
We recall that if $s\leq\frac{1}{n},$ one has $u_{1}\left(  s\right)
=u_{1}\left(  0,\gamma_{0},x_{0}\right)  $ and similar assertions hold true
for $v_{1}.$ By recalling that $\mathbb{K}$ is invariant with respect to the
controlled piecewise deterministic dynamics, one gets%
\begin{align}
&  \left\langle f_{\gamma_{0}}\left(  \Phi_{s}^{0,x_{0},u,v;\gamma_{0}}%
,u_{s},v_{s}\right)  -f_{\gamma_{0}}\left(  \Phi_{s}^{0,y_{0},u,w^{1,0}%
;\gamma_{0}},u_{s},w_{s}^{1,0}\right)  ,\Phi_{s}^{0,x_{0},u,v;\gamma_{0}}%
-\Phi_{s}^{0,y_{0},u,w^{1,0};\gamma_{0}}\right\rangle \nonumber\\
&  \leq\left\langle f_{\gamma_{0}}\left(  x_{0},u_{s},v_{s}\right)
-f_{\gamma_{0}}\left(  y_{0},u_{s},w_{s}^{1,0}\right)  ,x_{0}-y_{0}%
\right\rangle +c\left(  \left\vert x_{0}-\Phi_{s}^{0,x_{0},u,v;\gamma_{0}%
}\right\vert +\left\vert y_{0}-\Phi_{s}^{0,y_{0},u,w^{1,0};\gamma_{0}%
}\right\vert \right) \nonumber\\
&  \leq c\left(  \left\vert x_{0}-\Phi_{s}^{0,x_{0},u,v;\gamma_{0}}\right\vert
+\left\vert y-\Phi_{s}^{0,y_{0},u,w^{1,0};\gamma_{0}}\right\vert \right)
\leq\frac{c}{n}, \label{ineqf1}%
\end{align}
for all $0\leq s\leq\frac{1}{n}$. Similarly,
\begin{equation}%
\begin{array}
[c]{l}%
i.\text{ \ \ }\frac{c}{n}\geq\left\vert h\left(  \gamma_{0}\Phi_{s}%
^{0,x_{0},u,v;\gamma_{0}},u_{s},v_{s}\right)  -h\left(  \gamma_{0}\Phi
_{s}^{0,y_{0},u,w^{1,0};\gamma_{0}},u_{s},w_{s}^{1,0}\right)  \right\vert \\
\text{ \ \ \ \ \ \ \ \ }-Lip\left(  h\right)  \left\vert \Phi_{s}%
^{0,x_{0},u,v;\gamma_{0}}-\Phi_{s}^{0,y_{0},u,w^{1,0};\gamma_{0}}\right\vert
,\\
ii.\text{ \ }\frac{c}{n}\geq\left\vert \Phi_{s}^{0,x_{0},u,v;\gamma_{0}%
}+g_{\gamma_{0}}\left(  \theta,\Phi_{s}^{0,x_{0},u,v;\gamma_{0}},u_{s}%
,v_{s}\right)  -\Phi_{s}^{0,y_{0},u,w^{1,0};\gamma_{0}}-g_{\gamma_{0}}\left(
\theta,\Phi_{s}^{0,y_{0},u,w^{1,0};\gamma_{0}},u_{s},w_{s}^{1,0}\right)
\right\vert \\
\text{ \ \ \ \ \ \ \ }-\left\vert \Phi_{s}^{0,x_{0},u,v;\gamma_{0}}-\Phi
_{s}^{0,y_{0},u,w^{1,0};\gamma_{0}}\right\vert ,
\end{array}
\label{ineqhg1}%
\end{equation}
for all $0\leq s\leq\frac{1}{n}.$ The constant $c>1$ is generic, independent
of $\delta>0,n,s,x,y,u$ and is allowed to change from one line to another. We
define the control process $w^{1,1}$ by setting%
\[
w_{s}^{1,1}=w_{s}^{1,0}1_{s\leq\frac{1}{n}}+\widehat{w}\left(  \gamma_{0}%
,\Phi_{\frac{1}{n}}^{0,x_{0},u,v;\gamma_{0}},\Phi_{\frac{1}{n}}^{0,y_{0}%
,u,w^{1,0};\gamma_{0}},u_{1}\left(  \frac{1}{n}\right)  ,v_{1}\left(  \frac
{1}{n}\right)  \right)  1_{s>\frac{1}{n}}.
\]
Then the estimates in (\ref{ineqf1}) hold true for $s\in\left[  0,\frac{2}%
{n}\right]  $ if substituting $w^{1,1}$ to $w^{1,0}$. We set
\[
w_{s}^{1,2}=w_{s}^{1,1}1_{s\leq\frac{2}{n}}+\widehat{w}\left(  \gamma_{0}%
,\Phi_{\frac{2}{n}}^{0,x_{0},u,v;\gamma_{0}},\Phi_{\frac{2}{n}}^{0,y_{0}%
,u,w^{1,1};\gamma_{0}},u_{1}\left(  \frac{2}{n}\right)  ,v_{1}\left(  \frac
{2}{n}\right)  \right)  1_{s>\frac{2}{n}},
\]
and so on, to define $w^{1,3},w^{1,4},...,w^{1,n\left(  \left[  T\right]
+1\right)  }$. \ We fix some $w^{0}\in V$ and let $w^{1}=w_{s}^{1,n\left(
\left[  T\right]  +1\right)  }1_{s\leq T}+w^{0}1_{s>T},$ where $\left[
\cdot\right]  $ denotes the floor function. As consequence, by recalling that,
prior to $T_{1}$, both $X^{\gamma_{0},x_{0},u,v}$ and $Y^{y_{0},w}$ are
deterministic and can be identified with $\Phi^{0,x_{0},u,v;\gamma_{0}}$
(resp. $\Phi_{s}^{0,y_{0},u,w^{1,0};\gamma_{0}}$), we use (\ref{ineqf1}) to
get
\begin{equation}
\left\vert X_{t}^{\gamma_{0},x_{0},u,v}-Y_{t}^{y_{0},w^{1}}\right\vert
^{2}\leq\left\vert x_{0}-y_{0}\right\vert ^{2}+\frac{ct}{n}, \label{estimX_Y1}%
\end{equation}
for all $t\leq T$ on $t<T_{1}.$ It follows that%
\begin{equation}
\left\vert X_{t}^{\gamma_{0},x_{0},u,v}-Y_{t}^{y_{0},w^{1}}\right\vert
\leq\left\vert x_{0}-y_{0}\right\vert +\frac{c}{2\sqrt{n}}+\frac{t}{\sqrt{n}},
\label{estimX-Y1^1}%
\end{equation}
or all $t\leq T$ on $t<T_{1}.$ Moreover, on $T_{1}\leq T,$ using
(\ref{ineqhg1}.ii) and (\ref{estimX_Y1}) and recalling that $\mathbb{K}%
\subset\overline{B}\left(  0,k_{0}\right)  $ is invariant (see also Remark
\ref{RemInvarianceY}) one has
\begin{align}
\left\vert X_{T_{1}}^{\gamma_{0},x_{0},u,v}-Y_{T_{1}}^{y_{0},w^{1}%
}\right\vert  &  =\left\vert \Phi_{T_{1}-}^{0,x_{0},u,v;\gamma_{0}}%
+g_{\gamma_{0}}\left(  \gamma_{T_{1}}^{\gamma_{0},x_{0},u,v},\Phi_{T_{1}%
}^{0,x_{0},u,v;\gamma_{0}},u_{T_{1}},v_{T_{1}}\right)  \right. \nonumber\\
&  \left.  -\Phi_{T_{1}-}^{0,y_{0},u,w^{1,0};\gamma_{0}}-g_{\gamma_{0}}\left(
\gamma_{T_{1}}^{\gamma_{0},x_{0},u,v},\Phi_{T_{1}}^{0,y_{0},u,w^{1,0}%
;\gamma_{0}},u_{T_{1}},w_{T_{1}}^{1,0}\right)  \right\vert \nonumber\\
&  \leq\left\vert \Phi_{T_{1}-}^{0,x_{0},u,v;\gamma_{0}}-\Phi_{T_{1}%
-}^{0,y_{0},u,w^{1,0};\gamma_{0}}\right\vert +\frac{c}{n}\nonumber\\
&  \leq\min\left(  \sqrt{\left\vert x_{0}-y_{0}\right\vert ^{2}+\frac{cT_{1}%
}{n}},2k_{0}\right)  +\frac{c}{n}\nonumber\\
&  \leq\left\vert x_{0}-y_{0}\right\vert +\frac{c+T_{1}}{\sqrt{n}},
\label{estimX-YT1^1}%
\end{align}
for all $n\geq4.$ (The reader is invited to recall that $u_{T_{1}}$ is still
$u_{1}\left(  T_{1},\gamma_{0},x_{0}\right)  $ and that $u_{1}\in
\mathcal{A}_{0}^{n}$ is left continuous). One gets, on $T_{1}\leq T,$
\begin{align}
\left\vert X_{T_{1}}^{\gamma_{0},x_{0},u,v}-Y_{T_{1}}^{y_{0},w^{1}}\right\vert
^{2}  &  \leq\left(  \min\left(  \sqrt{\left\vert x_{0}-y_{0}\right\vert
^{2}+\frac{cT_{1}}{n}},2k_{0}\right)  +\frac{c}{n}\right)  ^{2}\leq\left\vert
x_{0}-y_{0}\right\vert ^{2}+\frac{cT_{1}}{n}+\frac{c^{2}}{n^{2}}+4k_{0}%
\frac{c}{n}\nonumber\\
&  \leq\left\vert x_{0}-y_{0}\right\vert ^{2}+\frac{c\left(  T_{1}%
+4k_{0}+1\right)  }{n}, \label{estimX-YT1}%
\end{align}
whenever $n\geq c.$ Finally, using (\ref{ineqhg1}.i) and (\ref{estimX-Y1^1}),
we get%
\begin{align*}
\left\vert h\left(  \gamma_{0},\Phi_{s}^{0,x_{0},u,v;\gamma_{0}},u_{s}%
,v_{s}\right)  -h\left(  \gamma_{0},\Phi_{s}^{0,y_{0},u,w^{1,0};\gamma_{0}%
},u_{s},w_{s}^{1}\right)  \right\vert  &  \leq Lip\left(  h\right)  \left\vert
\Phi_{s}^{0,x_{0},u,v;\gamma_{0}}-\Phi_{s}^{0,y_{0},u,w^{1,0};\gamma_{0}%
}\right\vert +\frac{c}{n}\\
&  \leq Lip\left(  h\right)  \left\vert x_{0}-y_{0}\right\vert +\frac{\left(
Lip\left(  h\right)  +1\right)  c+Lip\left(  h\right)  t}{\sqrt{n}}.
\end{align*}
for all $s<T\wedge T_{1}.$

\underline{Step 2.} We continue the construction on $\left[  T_{1}%
,T_{2}\right)  .$ By abuse of notation, we let $u_{2}\left(  s\right)
:=u_{2}\left(  \gamma_{T_{1}}^{\gamma_{0},x_{0},u,v},X_{T_{1}}^{\gamma
_{0},x_{0},u,v},s\right)  $, for $s\geq0$ and similar for $v_{2}$. We set%
\[
w_{s}^{2,1}:=w_{s}^{1}1_{0\leq s\leq T_{1}}+\widehat{w}\left(  \gamma_{T_{1}%
}^{\gamma_{0},x_{0},u,v},X_{T_{1}}^{\gamma_{0},x_{0},u,v},Y_{T_{1}}%
^{y_{0},w^{1}},u_{2}\left(  0\right)  ,v_{2}\left(  0\right)  \right)
1_{s>T_{1}}.
\]
It is clear that this control process is predictable. We apply the same method
as in Step 1 ($\omega$-wise) on the (stochastic) time interval $\left[
T_{1},\left(  T_{1}+\frac{1}{n}\right)  \wedge T_{2}\right]  ,$ then on
$\left[  T_{1},\left(  T_{1}+\frac{2}{n}\right)  \wedge T_{2}\right]  $, etc.
We construct a sequence of control processes $\left(  w^{2,m}\right)
_{m\geq0}$ and, by choosing $m$ large enough, we establish the existence of
some $w^{2}$ such that
\begin{equation}
\left\{
\begin{array}
[c]{l}%
\left\vert X_{t}^{\gamma_{0},x_{0},u,v}-Y_{t}^{y_{0},w^{2}}\right\vert
^{2}\leq\left\vert X_{T_{1}}^{\gamma_{0},x_{0},u,v}-Y_{T_{1}}^{y_{0},w^{1}%
}\right\vert ^{2}+\frac{c\left(  t-T_{1}\right)  }{n}\leq\left\vert
x_{0}-y_{0}\right\vert ^{2}+\frac{c\left(  t+4k_{0}+1\right)  }{n}\\
\left\vert X_{T_{2}}^{\gamma_{0},x_{0},u,v}-Y_{T_{2}}^{y_{0},w^{2}}\right\vert
\leq\left\vert X_{T_{2}-}^{\gamma_{0},x_{0},u,v}-Y_{T_{2}-}^{y_{0},w^{1}%
}\right\vert +\frac{c}{n}\\
\text{ \ \ \ \ \ \ \ \ \ \ \ \ \ \ \ \ \ \ \ \ \ \ \ \ \ \ \ \ \ }\leq
\min\left(  \sqrt{\left\vert x_{0}-y_{0}\right\vert ^{2}+\frac{c\left(
T_{2}+4k_{0}+1\right)  }{n}},2k_{0}\right)  +\frac{c}{n}\leq\left\vert
x_{0}-y_{0}\right\vert +\frac{c+T_{2}+4k_{0}+1}{\sqrt{n}},\\
\left\vert X_{T_{2}}^{\gamma_{0},x_{0},u,v}-Y_{T_{2}}^{y_{0},w^{2}}\right\vert
^{2}\leq\left\vert x_{0}-y_{0}\right\vert ^{2}+\frac{c\left(  T_{2}%
+4k_{0}+1\right)  }{n}+\frac{c^{2}}{n^{2}}+4k_{0}\frac{c}{n}\leq\left\vert
x_{0}-y_{0}\right\vert ^{2}+\frac{c\left(  T_{2}+2\left(  4k_{0}+1\right)
\right)  }{n},\\
\left\vert h\left(  \gamma_{t}^{\gamma_{0},x_{0},u,v},X_{t}^{\gamma_{0}%
,x_{0},u,v},u_{t},v_{t}\right)  -h\left(  \gamma_{t}^{\gamma_{0},x_{0}%
,u,v},Y_{t}^{y_{0},w^{2}},u_{t},w_{t}^{2}\right)  \right\vert \\
\leq Lip\left(  h\right)  \left\vert X_{t}^{\gamma_{0},x_{0},u,v}-Y_{t}%
^{y_{0},w^{1}}\right\vert +\frac{c}{n}\leq Lip\left(  h\right)  \left\vert
x_{0}-y_{0}\right\vert +\frac{(Lip\left(  h\right)  +1)c+Lip\left(  h\right)
\left(  t+4k_{0}+1\right)  }{\sqrt{n}},
\end{array}
\right.  \label{estimates0.2}%
\end{equation}
for all $T_{1}\leq t<T_{2}\wedge T$ $,$ $\mathbb{P-}a.s.$ if $n\geq\max\left(
4,c\right)  .$ We continue our construction on $\left[  0,T_{3}\wedge
T\right]  ,$ $\left[  0,T_{4}\wedge T\right]  $ and so on to finally get a
predictable process $w^{m}$ such that
\begin{equation}
\left\{
\begin{array}
[c]{l}%
\left\vert X_{t}^{\gamma_{0},x_{0},u,v}-Y_{t}^{y_{0},w^{m}}\right\vert
^{2}\leq\left\vert x_{0}-y_{0}\right\vert ^{2}+\frac{c\left[  t+\left(
i-1\right)  \left(  4k_{0}+1\right)  \right]  }{n},\\
\left\vert X_{T_{i}}^{\gamma_{0},x_{0},u,v}-Y_{T_{i}}^{y_{0},w^{m}}\right\vert
\leq\min\left(  \sqrt{\left\vert x_{0}-y_{0}\right\vert ^{2}+\frac{c\left[
T_{i}+\left(  i-1\right)  \left(  4k_{0}+1\right)  \right]  }{n}}%
,2k_{0}\right)  +\frac{c}{n}\\
\text{ \ \ \ \ \ \ \ \ \ \ \ \ \ \ \ \ \ \ \ \ \ \ \ \ \ \ \ \ \ \ }%
\leq\left\vert x_{0}-y_{0}\right\vert +\frac{c+T_{i}+\left(  i-1\right)
\left(  4k_{0}+1\right)  }{\sqrt{n}},\\
\left\vert X_{T_{i}}^{\gamma_{0},x_{0},u,v}-Y_{T_{i}}^{y_{0},w^{m}}\right\vert
^{2}\leq\left\vert x_{0}-y_{0}\right\vert ^{2}+\frac{c\left[  T_{i}+i\left(
4k_{0}+1\right)  \right]  }{n},\\
\left\vert h\left(  \gamma_{t}^{\gamma_{0},x_{0},u,v},X_{t}^{\gamma_{0}%
,x_{0},u,v},u_{t},v_{t}\right)  -h\left(  \gamma_{t}^{\gamma_{0},x_{0}%
,u,v},Y_{t}^{y_{0},w^{m}},u_{t},w_{t}^{m}\right)  \right\vert \\
\text{ \ \ \ \ \ \ \ \ \ \ \ \ \ \ \ \ \ \ \ \ \ \ \ \ \ \ \ \ \ }\leq
Lip\left(  h\right)  \left\vert x_{0}-y_{0}\right\vert +\frac{\left(
Lip\left(  h\right)  +1\right)  c+Lip\left(  h\right)  \left[  t+\left(
i-1\right)  \left(  4k_{0}+1\right)  \right]  }{\sqrt{n}},
\end{array}
\right.  \label{estimateswm}%
\end{equation}
for all $i\leq m,$ $T_{i-1}\leq t<T_{i}\wedge T,$ $\mathbb{P-}a.s.$

Let us note that in the same way as \cite[Inequality 3.27]{Soner86_2}, one has%
\[
\mathbb{E}\left[  e^{-\delta T_{m}}\right]  \leq\left(  1-\delta\int%
_{0}^{\infty}e^{-t\left(  \delta+\lambda_{\max}\right)  }dt\right)
^{m}=\left(  \frac{\lambda_{\max}}{\delta+\lambda_{\max}}\right)  ^{m},
\]
where $\lambda_{\max}=\underset{\left(  \gamma,x,u,v\right)  \in
\mathbb{M\times R}^{N}\times U\times V}{\sup}\left\vert \lambda_{\gamma
}\left(  x,u,v\right)  \right\vert .$ Then, using the estimates
(\ref{estimateswm}), one gets
\begin{align*}
&  \mathbb{E}\left[  \delta\int_{0}^{\infty}e^{-\delta t}\left\vert h\left(
\gamma_{t}^{\gamma_{0},x_{0},u,v},X_{t}^{\gamma_{0},x_{0},u,v},u_{t}%
,v_{t}\right)  -h\left(  \gamma_{t}^{\gamma_{0},x_{0},u,v},Y_{t}^{y_{0},w^{m}%
},u_{t},w_{t}^{m}\right)  \right\vert dt\right]  \\
&  \leq\mathbb{E}\left[
\begin{array}
[c]{c}%
\delta\sum_{i=0}^{m-1}\int_{T_{i}\wedge T}^{T_{i+1}\wedge T}e^{-\delta
t}\left(  Lip\left(  h\right)  \left\vert x_{0}-y_{0}\right\vert
+\frac{\left(  Lip\left(  h\right)  +1\right)  c+Lip\left(  h\right)  \left[
t+i\left(  4k_{0}+1\right)  \right]  }{\sqrt{n}}\right)  dt\\
+2h_{\max}1_{T_{m}<T}\int_{T_{m}}^{T}\delta e^{-\delta t}dt+2h_{\max
}e^{-\delta T}%
\end{array}
\right]  \\
&  \leq Lip\left(  h\right)  \left\vert x_{0}-y_{0}\right\vert +\frac{\left(
Lip\left(  h\right)  +1\right)  c+mLip\left(  h\right)  \left(  4k_{0}%
+1\right)  }{\sqrt{n}}+\frac{Lip\left(  h\right)  }{\sqrt{n}}\int_{0}^{\infty
}\delta e^{-\delta t}tdt+2h_{\max}e^{-\delta T}\\
&  +2h_{\max}\mathbb{E}\left[  \left(  e^{-\delta T_{m}}-e^{-\delta T}\right)
1_{T_{m}<T}\right]  \\
&  \leq Lip\left(  h\right)  \left\vert x_{0}-y_{0}\right\vert +\frac
{Lip\left(  h\right)  +1}{\sqrt{n}}\left(  c+m\left(  4k_{0}+1\right)
+\frac{1}{\delta}\right)  +2h_{\max}e^{-\delta T}+2h_{\max}\mathbb{E}\left[
e^{-\delta T_{m}}\right]  \\
&  \leq Lip\left(  h\right)  \left\vert x_{0}-y_{0}\right\vert +\frac
{Lip\left(  h\right)  +1}{\sqrt{n}}\left(  c+m\left(  4k_{0}+1\right)
+\frac{1}{\delta}\right)  +2h_{\max}e^{-\delta T}+2h_{\max}\left(
\frac{\lambda_{\max}}{\delta+\lambda_{\max}}\right)  ^{m}.
\end{align*}
The proof of our Lemma is now complete by picking $T=m=n^{\frac{1}{4}}$.
\end{proof}

\section{Appendix\label{Section7Appendix}}

We provide, in this appendix, the key elements of proof leading to Theorem
\ref{ThPiecewiseCtControl}. As we have already hinted before, the proof relies
on the same arguments as those developed in \cite{Krylov_step_99} combined
with dynamic programming principles developed in \cite{Soner86_2}.

If one assumes that $\mathbb{K\subset}\overline{B}\left(  0,k_{0}\right)  $ is
convex and invariant w.r.t. the PDMP governed by $\left(  f,\lambda,Q\right)
$, then one modifies the dynamics such that for $\rho\in\left\{
f,\lambda\right\}  $, $\widetilde{\rho}_{\gamma}\left(  x,u\right)  =0,$ if
$x\notin B\left(  0,k_{0}+1\right)  ,$ $\widetilde{\rho}_{\gamma}\left(
x,u\right)  =\rho_{\gamma}\left(  x,u\right)  ,$ if $x\in\mathbb{K}$ and
setting, for example, $\widetilde{g}_{\gamma}\left(  \theta,x,u\right)
=\Pi_{\mathbb{K}}\left(  x\right)  -x+g_{\gamma}\left(  \theta,\Pi
_{\mathbb{K}}\left(  x\right)  ,u\right)  ,$ for all $x\in%
\mathbb{R}
^{N}.$ Here, $\Pi_{\mathbb{K}}$ stands for the projector onto $\mathbb{K}$. In
this way, all jumps $x\mapsto x+\widetilde{g}\left(  \theta,x,u\right)
=\Pi_{K}\left(  x\right)  +g\left(  \theta,\Pi_{K}\left(  x\right)  ,u\right)
$ take the trajectory in $\mathbb{K}$ (by invariance of this set) and, if the
trajectory reaches $\overline{B}\left(  0,k_{0}+1\right)  ,$ it stays
constant. For the extended dynamics (constructed from this modification as in
Subsection \ref{Subsection6.1Krylov}), one gets
\begin{align*}
\overline{f}_{\gamma}\left(  x,u^{1},u^{2}\right)   &  =\widetilde{f}_{\gamma
}\left(  x+u^{2},u^{1}\right)  =0,\text{ for all }x\in%
\mathbb{R}
^{N}\text{ such that }\left\vert x\right\vert \geq k_{0}+2,\text{ }\\
x+\overline{g}_{\gamma}\left(  \theta,x,u^{1},u^{2}\right)   &  =\left[
x+u^{2}+\widetilde{g}_{\gamma}\left(  \theta,x+u^{2},u^{1}\right)  \right]
-u^{2}\in\mathbb{K-}u^{2}\subset\overline{B}\left(  0,k_{0}+1\right)  ,\text{
for all }x\in%
\mathbb{R}
^{N},
\end{align*}
for all $u^{1}\in\mathbb{U}$ and all $\left\vert u^{2}\right\vert \leq1.$

It follows that the set $\mathbb{K}^{+}:=\overline{B}\left(  0,k_{0}+2\right)
$ is invariant w.r.t the extended dynamics. In fact all sets $\overline
{B}\left(  0,k_{0}+n\right)  $, $n\geq2$ are invariant. Let us emphasize that
this construction is the only point in which the convexity of $\mathbb{K}$
plays a part and it can be avoided by further assumptions.

Let us fix, for the time being, $\delta>0$ and $n\geq1.$

\subsection{Dynamic Programming Principle(s) for (Time) Piecewise Constant
Policies}

The first ingredient is to provide dynamic programming principles and uniform
continuity for the value functions given with respect to piecewise constant
policies with respect to the initial and auxiliary systems (cf. Subsection
\ref{Subsection6.1Krylov}). In addition to $\mathcal{A}_{0}^{n}=\mathcal{A}%
_{0}^{n}\left(  \mathbb{U}\right)  $ and $\mathcal{A}_{ad}^{n}:=\mathcal{A}%
_{ad}^{n}\left(  \mathbb{U}\right)  ,$ one introduces $\mathcal{B}_{0}%
^{n}=\mathcal{A}_{0}^{n}\left(  \mathbb{U\times}\overline{B}\left(
0,1\right)  \right)  $ and $\mathcal{B}_{ad}^{n}=\mathcal{A}_{ad}^{n}\left(
\mathbb{U\times}\overline{B}\left(  0,1\right)  \right)  $ and
\[
v_{\varepsilon}^{\delta,n}(\gamma,x)=\inf_{\left(  u^{1},u^{2}\right)
\in\mathcal{B}_{ad}^{n}}\delta\mathbb{E}\left[  \int_{0}^{\infty}e^{-\delta
t}h\left(  \Gamma_{t}^{\gamma,x,u^{1},\varepsilon u^{2}},X_{t}^{\gamma
,x,u^{1},\varepsilon u^{2}}+\varepsilon u_{t}^{2},u_{t}^{1}\right)  dt\right]
,
\]
for all initial data $\gamma\in\mathbb{M},$ $x\in\overline{B}\left(
0,k_{0}+3\right)  .$

One begins with proving the dynamic programming principles.
\[
\left.
\begin{array}
[c]{l}%
v^{\delta,n}\left(  \gamma,x\right)  =\underset{u\in\mathcal{A}_{0}^{n}}{\inf
}\mathbb{E}\left[  \int_{0}^{T_{1}\wedge T}\delta e^{-\delta t}h\left(
\Gamma_{t}^{\gamma,x,u,0},X_{t}^{\gamma,x,u,0},u_{t}\right)  dt+e^{-\delta
\left(  T_{1}\wedge T\right)  }v^{\delta,n}\left(  \Gamma_{T_{1}\wedge
T}^{\gamma x,u,0},X_{T_{1}\wedge T}^{\gamma x,u,0}\right)  \right]  \text{ and
}\\
v_{\varepsilon}^{\delta,n}(\gamma,x)=\underset{u\in\mathcal{B}_{0}^{n}}{\inf
}\mathbb{E}\left[
\begin{array}
[c]{c}%
\int_{0}^{T_{1}\wedge T}\delta e^{-\delta t}h\left(  \Gamma_{t}^{\gamma
,x,u^{1},\varepsilon u^{2}},X_{t}^{\gamma,x,u^{1},\varepsilon u^{2}%
}+\varepsilon u_{t}^{2},u_{t}^{1}\right)  dt\\
+e^{-\delta\left(  T_{1}\wedge T\right)  }v_{\varepsilon}^{\delta,n}\left(
\Gamma_{T_{1}\wedge T}^{\gamma,x,u^{1},\varepsilon u^{2}},X_{T_{1}\wedge
T}^{\gamma,x,u^{1},\varepsilon u^{2}}\right)
\end{array}
\right]  .
\end{array}
\right.
\]
The arguments are similar to those employed in \cite{Soner86_2}. We will only
emphasize what changes when using controls from $\mathcal{A}_{ad}^{n}$ (or
$\mathcal{B}_{ad}^{n}$) instead of the (more) classical $\mathcal{A}_{ad}.$

Following \cite{Soner86_2}, we introduce%
\[
w^{M,n}\left(  \gamma,x\right)  :=\inf_{u\in\mathcal{A}_{0}^{n}}J^{M,n}\left(
\gamma,x,u\right)  ,
\]
where
\[
J^{M,n}\left(  \gamma,x,u\right)  :=\mathbb{E}\left[  \delta\int_{0}^{T_{1}%
}e^{-\delta t}h\left(  \Gamma_{t}^{\gamma,x,u,0},X_{t}^{\gamma,x,u,0}%
,u_{t}\right)  dt+e^{-\delta T_{1}}w^{M-1,n}\left(  \Gamma_{T_{1}}%
^{\gamma,x,u,0},X_{T_{1}}^{\gamma,x,u,0}\right)  \right]  ,
\]
whenever $M\geq1$. The initial value $w^{0,n}$ is given with respect to the
deterministic control problem (with no jump) and it is standard to check that
it is H\"{o}lder continuous (the H\"{o}lder exponent may be chosen
$\frac{\delta}{Lip\left(  f\right)  }$, where $Lip\left(  f\right)  $ is the
Lipschitz constant for $f_{\gamma}$ for all $\gamma\in\mathbb{M}$ and the
H\"{o}lder constant only depends on the Lipschitz constants and supremum norm
of $f$ and $h$). In particular, the continuity modulus of $w^{0,n}$ (resp.
$w_{\varepsilon}^{0,n}$ defined w.r.t. $\mathcal{B}_{0}^{n}$) is independent
of $n$ (resp. $n$ and $\varepsilon$).

\underline{Step 1.} If $w^{M-1,n}\in BUC\left(  \mathbb{M\times}%
\mathbb{R}
^{N};%
\mathbb{R}
\right)  ,$ then the dynamic programming principle holds true for $w^{M,n}$
and all $\left(  \gamma,x\right)  \in\mathbb{M}\times%
\mathbb{R}
^{N},$ $T\in n^{-1}%
\mathbb{N}
$ :%
\[
w^{M,n}(\gamma,x)=\inf_{u\in\mathcal{A}_{0}^{n}}\mathbb{E}\left[
\begin{array}
[c]{c}%
\delta\int_{0}^{T_{1}\wedge T}e^{-\delta t}h\left(  \Gamma_{t}^{\gamma
,x,u,0},X_{t}^{\gamma,x,u,0},u_{t}\right)  dt+e^{-\delta T}w^{M,n}\left(
\gamma,\Phi_{T}^{0,x,u;\gamma}\right)  1_{T_{1}>T}\\
+e^{-\delta T_{1}}w^{M-1,n}\left(  \Gamma_{T_{1}}^{\gamma,x,u,0},X_{T_{1}%
}^{\gamma,x,u,0}\right)  1_{\tau_{1}\leq T}%
\end{array}
\right]  .
\]
The proof is identical with the proof of \cite[Lemma 3.1]{Soner86_2}. The
reader needs only note that the control policy given by \cite[Eq.
(3.5)]{Soner86_2} of the form%
\[
\overline{u}\left(  \theta,y,t\right)  :=u\left(  \theta,y,t\right)
1_{\left[  0,T\right]  }\left(  t\right)  +u^{\ast}\left(  \theta,\Phi
_{T}^{0,y,u;\theta},t-T\right)  1_{t>T}%
\]
belongs to $\mathcal{A}_{0}^{n}$ if $u$ and $u^{\ast}$ belong to
$\mathcal{A}_{0}^{n}.$

\underline{Step 2.} Since $\overline{B}\left(  0,k_{0}+3\right)  $ is
invariant with respect to the extended PDMP, one has $w^{M,n}\in BUC\left(
\mathbb{M\times}\overline{B}\left(  0,k_{0}+3\right)  \right)  $ and, for
every $\alpha>0,$ there exists a $\alpha-$optimal control policy $u^{\ast}\in$
$\mathcal{A}_{0}^{n}$ such that
\[
J^{M,n}\left(  \gamma,x,u^{\ast}\right)  \leq w^{M,n}\left(  \gamma,x\right)
+\alpha,
\]
for all $x\in\overline{B}\left(  0,k_{0}+3\right)  .$

Again, the proof is identical with the proof of the analogous Lemma 3.3 in
\cite{Soner86_2} and based on recurrence. The reader needs only note that, for
$r>0,$ there exists a finite family $\left\{  x_{k}:k=1,m\right\}  $ such
that
\[
\overline{B}\left(  0,k_{0}+3\right)  \subset\overset{m}{\underset{k=1}{\cup}%
}B\left(  x_{k},r\right)  .
\]
Then, the control policy $u$ defined after (3.18) in \cite{Soner86_2} belongs
to $\mathcal{A}_{0}^{n}$ if $u_{k}$ belong to $\mathcal{A}_{0}^{n}$, for all
$k=1,m.$ We also wish to point out that the estimates leading to the
continuity modulus of $w^{M,n}$ only depend on the Lipschitz constants and the
supremum of $h$, $f,$ $g$ and $\lambda$ but are independent of the control
policies. In particular, this allows one to work with a common continuity
modulus $\omega^{\delta,M}$ for all $n\geq1$ and $\varepsilon>0.$

One concludes using the same arguments (no particular changes needed) as those
in \cite[Theorem 3.4]{Soner86_2}$.$ Due to \cite[Inequality 3.27]{Soner86_2},
one gets%
\[
\sup_{\left(  \gamma,x\right)  \in\mathbb{M\times}\overline{B}\left(
0,k_{0}+3\right)  }\left\vert v^{\delta,n}\left(  \gamma,x\right)
-w^{M,n}\left(  \gamma,x\right)  \right\vert \leq c\alpha^{M},
\]
where $c>0$ and $0<\alpha<1$ are independent of $n$ ($c$ can be chosen as in
\cite{Soner86_2} equal to $2f_{\max}$ and $\alpha$ as in \cite[Page 1120, last
line]{Soner86_2} to be $1-\delta\int_{0}^{\infty}e^{-\left(  \delta
+\lambda_{\max}\right)  t}dt=\frac{\lambda_{\max}}{\delta+\lambda_{\max}}<1).$
The same is true for $v_{\varepsilon}^{\delta,n}-w_{\varepsilon}^{M,n}$ for
$\varepsilon>0.$ In particular,
\[
\left\vert v_{\varepsilon}^{\delta,n}\left(  \gamma,x\right)  -v_{\varepsilon
}^{\delta,n}\left(  \gamma,y\right)  \right\vert \leq\omega^{\delta,M}\left(
\left\vert x-y\right\vert \right)  +2c\left(  \frac{\lambda_{\max}}%
{\delta+\lambda_{\max}}\right)  ^{M},
\]
i.e. the continuity modulus of $v_{\varepsilon}^{\delta,n}$ can also be chosen
independent of $n\geq1$ and $\varepsilon\geq0$ (we identify $v_{0}^{\delta,n}$
with $v^{\delta,n}$). This common continuity modulus will be denoted by
$\omega^{\delta}$, i.e.
\begin{equation}
\omega^{\delta}\left(  r\right)  =\sup_{n\geq1,\varepsilon>0}\sup
_{\substack{\gamma\in\mathbb{M}\\\left\vert x-y\right\vert \leq r}}\left\vert
v_{\varepsilon}^{\delta,n}\left(  \gamma,x\right)  -v_{\varepsilon}^{\delta
,n}\left(  \gamma,x\right)  \right\vert ,\text{ }r>0,\text{ }\omega^{\delta
}\left(  0\right)  :=\lim_{\substack{r\rightarrow0\\r>0}}\omega^{\delta
}\left(  r\right)  =0. \label{omegadelta}%
\end{equation}
The reader will note that $\omega^{\delta}\left(  r\right)  \geq cr,$ for some
$c>0,$ where the equality corresponds to the Lipschitz case.

\subsection{Estimates and Proof of Theorem \ref{ThPiecewiseCtControl}}

We begin with the following convergence result.

\begin{proposition}
\label{propConvApp}For every $\delta>0$ there exists a decreasing function
$\eta^{\delta}:%
\mathbb{R}
_{+}\longrightarrow%
\mathbb{R}
_{+}$ that satisfies $\lim_{\varepsilon\rightarrow0}\eta^{\delta}\left(
\varepsilon\right)  =0$ and such that
\begin{equation}
\sup_{x\in\mathbb{K}^{+}}\left\vert v_{\varepsilon}^{\delta,n}(\gamma
,x)-v^{\delta,n}\left(  \gamma,x\right)  \right\vert \leq\eta^{\delta}\left(
\varepsilon\right)  ,
\end{equation}
for all $n\geq1$ and all $\varepsilon\geq0.$
\end{proposition}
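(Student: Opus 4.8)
The goal is to show that the shaken value function $v_{\varepsilon}^{\delta,n}$ converges to $v^{\delta,n}$ uniformly over $\mathbb{K}^{+}$ and over the discretization index $n$, with a modulus $\eta^{\delta}(\varepsilon)$ that is uniform in $n$. The plan is to exploit the two ingredients already assembled in the Appendix: the uniform-in-$(n,\varepsilon)$ continuity modulus $\omega^{\delta}$ from \eqref{omegadelta}, and the fact that the shaken dynamics differ from the original one only by the spatial perturbation of size $\varepsilon|u^2|\le\varepsilon$ in the coefficients. Let me describe the two directions of the inequality separately, since the two value functions are infima over control classes of comparable size.

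First I would prove the easy inequality $v^{\delta,n}\le v_{\varepsilon}^{\delta,n}+\eta^{\delta}(\varepsilon)$ by a trajectory-comparison argument. Fix $(\gamma,x)$ and a near-optimal policy $(u^1,u^2)\in\mathcal{B}_{ad}^{n}$ for $v_{\varepsilon}^{\delta,n}$. Using the same policy $u^1\in\mathcal{A}_{ad}^{n}$ (setting the auxiliary perturbation to zero) in the original problem, I would compare $X_{t}^{\gamma,x,u^1,0}$ with the shaken trajectory $X_{t}^{\gamma,x,u^1,\varepsilon u^2}+\varepsilon u_t^2$. Because the perturbed drift is $f_\gamma(\,\cdot+\varepsilon u^2,u^1)$ and the jump rate/measure are shifted analogously, the two piecewise-deterministic trajectories can be coupled to the \emph{same} jump times and mode transitions (the compensator of $p$ depends on position only through the Lipschitz coefficients), so Gronwall's inequality controls the pathwise state distance by a term of order $\varepsilon$, uniformly in $n$, on each inter-jump interval. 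Feeding this into the Lipschitz continuity of $h$ in the state variable and the uniform discounting $\delta e^{-\delta t}$ yields the bound with an explicit $\eta^{\delta}(\varepsilon)=C\varepsilon$ of the expected form; the reverse comparison is symmetric, embedding any near-optimal $u^1\in\mathcal{A}_{ad}^{n}$ for $v^{\delta,n}$ into $\mathcal{B}_{ad}^{n}$ with $u^2\equiv0$.

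The more delicate point—and the step I expect to be the main obstacle—is obtaining a modulus $\eta^{\delta}$ that is genuinely \emph{independent of $n$}. The naive Gronwall constant on $[0,t]$ grows like $e^{Ct}$ and, accumulated over the (unboundedly many) jumps that can occur before the discount kills the integrand, could in principle carry a hidden $n$-dependence through the number of discretization cells. To control this I would instead route the estimate through the martingale-measure/SDE formulation of Subsection \ref{Subsection6.3RandomMeasures} and the uniform continuity modulus $\omega^{\delta}$ already proven in \eqref{omegadelta}: since $v_{\varepsilon}^{\delta,n}$ and $v^{\delta,n}$ share the same spatial continuity modulus $\omega^{\delta}$ uniformly in $(n,\varepsilon)$, and since the shaking shifts the effective initial/current state by at most $\varepsilon$, the difference of the two value functions is controlled by $\omega^{\delta}(C\varepsilon)$ plus the geometric tail $2c(\lambda_{\max}/(\delta+\lambda_{\max}))^M$ that already appeared in the dynamic-programming estimate. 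Letting $M\to\infty$ removes the tail and leaves $\eta^{\delta}(\varepsilon):=\omega^{\delta}(C\varepsilon)$, which is decreasing in $\varepsilon$ and tends to $0$ as $\varepsilon\to0$ precisely because $\omega^{\delta}(0^+)=0$. The invariance of $\mathbb{K}^{+}=\overline{B}(0,k_0+2)$ under the extended dynamics (established just above) is what guarantees that all trajectories entering these estimates stay in a fixed compact set, so that the Lipschitz and boundedness constants of $f,g,\lambda,h$ apply uniformly and no $n$-dependence sneaks in through unbounded excursions.
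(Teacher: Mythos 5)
Your proposal has a genuine gap in both of its pillars. First, the trajectory-comparison argument: you claim the original and shaken processes ``can be coupled to the \emph{same} jump times and mode transitions.'' This is false in the setting where Theorem \ref{ThPiecewiseCtControl} is proved: the shaken jump rate is $\overline{\lambda}_{\gamma}\left(x,u^{1},u^{2}\right)=\lambda_{\gamma}\left(x+u^{2},u^{1}\right)$, which genuinely differs from $\lambda_{\gamma}\left(x,u^{1}\right)$ (Lipschitz dependence on position is exactly what makes the rates unequal, not what makes them matchable), so the jump times of the two systems have different laws and at best admit an approximate coupling with an order-$\varepsilon$ decoupling intensity. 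Moreover, even granting matched jump times, the pathwise Gronwall bound grows like $\varepsilon e^{Lip(f)t}$ on an inter-jump interval of unbounded length, and $\int_{0}^{\infty}\delta e^{-\delta t}e^{Lip(f)t}dt=\infty$ whenever $\delta\leq Lip(f)$ --- precisely the regime of interest --- so the discount does \emph{not} absorb the error, and your claimed linear modulus $\eta^{\delta}\left(\varepsilon\right)=C\varepsilon$ cannot hold: the value functions here are only H\"{o}lder continuous with exponent of order $\delta/Lip(f)$ (see the discussion of $w^{0,n}$ in the Appendix), so any correct modulus must involve $\omega^{\delta}$, not $\varepsilon$ itself. (Truncating the Gronwall bound by the diameter of the invariant set could salvage a $\delta$-dependent modulus of order $\varepsilon^{\delta/Lip(f)}$, but you neither perform this truncation nor address the coupling failure.)

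Second, your proposed repair is not a proof. Knowing that $v_{\varepsilon}^{\delta,n}$ and $v^{\delta,n}$ share the common spatial modulus $\omega^{\delta}$ bounds increments of each function separately; it says nothing about the difference of the two functions at the same point, and the shaking is a \emph{time-varying} control perturbation over which one takes an infimum, not a static shift of the state by $\varepsilon$, so ``the difference is controlled by $\omega^{\delta}\left(C\varepsilon\right)$'' is an assertion of the conclusion. What is missing --- and what the paper's proof supplies --- is a fixed-point mechanism: one writes the dynamic programming principle at the \emph{truncated} time $T_{1}\wedge 1$ (so that the flow comparison is only ever needed on the bounded horizon $\left[0,1\right]$, where $\left\vert \Phi_{t}^{0,x,u^{1},\varepsilon u^{2};\gamma}-\Phi_{t}^{0,x,u^{1},0;\gamma}\right\vert \leq C\varepsilon$ with $C$ independent of $n,\varepsilon$), replaces $v^{\delta,n}$ by $v_{\varepsilon}^{\delta,n}$ in the continuation terms at the price of $\sup\left\vert v^{\delta,n}-v_{\varepsilon}^{\delta,n}\right\vert$ weighted by
\begin{equation*}
\int_{0}^{1}\lambda^{1,2}(t)\Lambda^{1,2}\left(t\right)e^{-\delta t}dt+\Lambda^{1,2}(1)e^{-\delta}\leq\frac{\lambda_{\max}}{\lambda_{\max}+\delta}+\frac{\delta}{\lambda_{\max}+\delta}e^{-\left(\lambda_{\max}+\delta\right)}=:\kappa<1,
\end{equation*}
and solves the resulting self-referential inequality to obtain $\eta^{\delta}\left(\varepsilon\right)=C\left(\varepsilon+\omega^{\delta}\left(C\varepsilon\right)\right)/\left(1-\kappa\right)$. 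Your mention of the geometric tail $\left(\lambda_{\max}/\left(\delta+\lambda_{\max}\right)\right)^{M}$ gestures at this, but without the strict contraction weight and without summing the per-step errors (which yields the factor $\left(1-\kappa\right)^{-1}$ rather than the bare $\omega^{\delta}\left(C\varepsilon\right)$ you state), the argument does not close.
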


\begin{proof}
The proof is similar to the one of \cite[Theorem 3.6]{G8}. However, we present
the arguments for reader's sake. Let us fix $\gamma\in\mathbb{M}$,
$x\in\mathbb{K}^{+}$ and $\varepsilon>0.$ The definition of the value
functions implies that $v_{\varepsilon}^{\delta,n}(\gamma,x)\leq v^{\delta
,n}\left(  \gamma,x\right)  $. Standard estimates yield the existence of some
positive constant $C>0$ which is independent of $\gamma,x,$ of $n\geq1$ and
$\varepsilon>0$ such that
\begin{equation}
\left\vert \Phi_{t}^{0,x,u^{1},\varepsilon u^{2};\gamma}-\Phi_{t}%
^{0,x,u^{1},0;\gamma}\right\vert \leq C\varepsilon, \label{est}%
\end{equation}
for all $t\in\left[  0,1\right]  ,$ and all $\left(  u^{1},u^{2}\right)
\in\mathcal{B}_{0}^{n}.$ We recall that $\Phi_{\cdot}^{0,x,u^{1},u^{2};\gamma
}$ is the unique solution of the deterministic equation%
\[
\left\{
\begin{array}
[c]{l}%
d\Phi_{t}^{0,x,u^{1},u^{2};\gamma}=\overline{f}_{\gamma}\left(  \Phi
_{t}^{0,x,u^{1},\varepsilon u^{2};\gamma},u_{t}^{1},u_{t}^{2}\right)
dt=f_{\gamma}\left(  \Phi_{t}^{0,x,u^{1},u^{2};\gamma}+u_{t}^{2},u_{t}%
^{1}\right)  dt,\\
\Phi_{0}^{0,x,u^{1},u^{2};\gamma}=x.
\end{array}
\right.
\]
The constant $C$ in (\ref{est}) is generic and may change from one line to
another. We emphasize that throughout the proof, $C$ may be chosen independent
of $x\in%
\mathbb{R}
^{N},$ $n\geq1,$ $\varepsilon>0$ and of $\left(  u^{1},u^{2}\right)
\in\mathcal{B}_{0}^{n}$ (it only depends on Lipschitz constants and bounds of
$f$, $\lambda,$ $g$ and $h$)$.$ The dynamic programming principle written for
$v^{\delta,n}$ yields
\begin{equation}
v^{\delta,n}(\gamma,x)\leq\mathbb{E}\left[  \int_{0}^{T_{1}\wedge1}\delta
e^{-\delta t}h\left(  \Gamma_{t}^{\gamma,x,u^{1},0},X_{t}^{\gamma,x,u^{1}%
,0},u_{t}^{1}\right)  dt+e^{-\delta\left(  T_{1}\wedge1\right)  }v^{\delta
,n}\left(  \Gamma_{T_{1}\wedge1}^{\gamma,x,u^{1},0},X_{T_{1}\wedge1}%
^{\gamma,x,u^{1},0}\right)  \right]  , \label{I0}%
\end{equation}
for all $u^{1}\in\mathcal{A}_{0}^{n}$. We consider an arbitrary admissible
control couple $\left(  u^{1},u^{2}\right)  \in\mathcal{B}_{0}^{n}$. For
simplicity, we introduce the following notations:
\begin{align*}
u_{t}^{i}  &  =u^{i}\left(  x,t\right)  ,\text{ }i=1,2,\\
\lambda^{1}\left(  t\right)   &  =\lambda_{\gamma}\left(  \Phi_{t}%
^{0,x,u^{1},0;\gamma},u_{t}^{1}\right)  ,\text{ }\Lambda^{1}\left(  t\right)
=\exp\left(  -\int_{0}^{t}\lambda^{1}\left(  s\right)  ds\right) \\
\lambda^{1,2}\left(  t\right)   &  =\lambda_{\gamma}\left(  \Phi
_{t}^{0,x,u^{1},\varepsilon u^{2};\gamma}+\varepsilon u_{t}^{2},u_{t}%
^{1}\right)  ,\text{ }\Lambda^{1,2}\left(  t\right)  =\exp\left(  -\int%
_{0}^{t}\lambda^{1,2}\left(  s\right)  ds\right)  ,
\end{align*}
for all $t\geq0$. We denote the right-hand member of the inequality (\ref{I0})
by $I.$ Then, $I$ is explicitly given by
\begin{align*}
I  &  =\int_{0}^{1}\lambda^{1}(t)\Lambda^{1}\left(  t\right)  \int_{0}%
^{t}\delta e^{-\delta s}h\left(  \gamma,\Phi_{s}^{0,x,u^{1},0;\gamma}%
,u_{s}^{1}\right)  dsdt\\
&  +\int_{0}^{1}\lambda^{1}(t)\Lambda^{1}\left(  t\right)  e^{-\delta t}\int_{%
\mathbb{R}
^{N}}v^{\delta,n}\left(  \theta,\Phi_{t}^{0,x,u^{1},0;\gamma}+g_{\gamma
}\left(  \theta,\Phi_{t}^{0,x,u^{1},0;\gamma},u_{t}^{1}\right)  \right)
Q^{0}\left(  \gamma,u_{t}^{1},d\theta\right)  dt\\
&  +\Lambda^{1}\left(  1\right)  \int_{0}^{1}\delta e^{-\delta t}h\left(
\gamma,\Phi_{t}^{0,x,u^{1},0;\gamma},u_{t}^{1}\right)  dt+\Lambda^{1}\left(
1\right)  e^{-\delta}v^{\delta,n}\left(  \gamma,\Phi_{1}^{0,x,u^{1},0;\gamma
}\right) \\
&  =I_{1}+I_{2}+I_{3}+I_{4}.
\end{align*}
Using the inequality (\ref{est}), one gets%
\begin{align}
I_{1}  &  \leq\int_{0}^{1}\lambda^{1,2}(t)\Lambda^{1,2}\left(  t\right)
\int_{0}^{t}\delta e^{-\delta s}h\left(  \gamma,\Phi_{s}^{0,x,u^{1}%
,\varepsilon u^{2};\gamma}+\varepsilon u_{s}^{2},u_{s}^{1}\right)
dsdt+C\varepsilon,\label{I1}\\
I_{3}  &  \leq\Lambda^{1,2}\left(  1\right)  \int_{0}^{1}\delta e^{-\delta
t}h\left(  \gamma,\Phi_{t}^{0,x,u^{1},\varepsilon u^{2};\gamma}+\varepsilon
u_{t}^{2},u_{t}^{1}\right)  dt+C\varepsilon. \label{I2}%
\end{align}
For the term $I_{2},$ with the notation (\ref{omegadelta}), one has%
\begin{align}
&  I_{2}\nonumber\\
&  \leq\int_{0}^{1}\lambda^{1,2}(t)\Lambda^{1,2}\left(  t\right)  e^{-\delta
t}\int_{%
\mathbb{R}
^{N}}v^{\delta,n}\left(  \theta,\Phi_{t}^{0,x,u^{1},\varepsilon u^{2};\gamma
}+g_{\gamma}\left(  \theta,\Phi_{t}^{0,x,u^{1},\varepsilon u^{2};\gamma
}+\varepsilon u_{t}^{2},u_{t}^{1}\right)  \right)  Q^{0}\left(  \gamma
,u_{t}^{1},d\theta\right)  dt\nonumber\\
&  +C\left(  \varepsilon+\omega^{\delta}\left(  C\varepsilon\right)  \right)
\nonumber\\
&  \leq\int_{0}^{1}\lambda^{1,2}(t)\Lambda^{1,2}\left(  t\right)  e^{-\delta
t}\int_{%
\mathbb{R}
^{N}}v_{\varepsilon}^{\delta,n}\left(  \theta,\Phi_{t}^{0,x,u^{1},\varepsilon
u^{2};\gamma}+g_{\gamma}\left(  \theta,\Phi_{t}^{0,x,u^{1},\varepsilon
u^{2};\gamma}+\varepsilon u_{t}^{2},u_{t}^{1}\right)  \right)  Q^{0}\left(
\gamma,u_{t}^{1},d\theta\right)  dt\nonumber\\
&  +\left(  \int_{0}^{1}\lambda^{1,2}(t)\Lambda^{1,2}\left(  t\right)
e^{-\delta t}dt\right)  \sup_{\theta\in\mathbb{M},z\in\mathbb{K}^{+}%
}\left\vert v^{\delta,n}(z)-v_{\varepsilon}^{\delta,n}(z)\right\vert +C\left(
\varepsilon+\omega^{\delta}\left(  C\varepsilon\right)  \right)  . \label{I3}%
\end{align}
Finally,
\begin{align}
I_{4}  &  \leq\Lambda^{1,2}\left(  1\right)  e^{-\delta}v^{\delta,n}\left(
\gamma,\Phi_{1}^{0,x,u^{1},\varepsilon u^{2};\gamma}\right)  +C\left(
\varepsilon+\omega^{\delta}\left(  C\varepsilon\right)  \right) \nonumber\\
&  \leq\Lambda^{1,2}(1)e^{-\delta}v_{\varepsilon}^{\delta,n}\left(
\gamma,\Phi_{1}^{0,x,u^{1},\varepsilon u^{2};\gamma}\right)  +\Lambda
^{1,2}(1)e^{-\delta}\sup_{\theta\in\mathbb{M},z\in\mathbb{K}^{+}}\left\vert
v^{\delta,n}(\theta,z)-v_{\varepsilon}^{\delta,n}(\theta,z)\right\vert
+C\left(  \omega^{\delta}\left(  C\varepsilon\right)  +\varepsilon\right)  .
\label{I4}%
\end{align}
We substitute (\ref{I1})-(\ref{I4}) in (\ref{I0}). We take the infimum over
the family of $\left(  u^{1},u^{2}\right)  \in\mathcal{B}_{0}^{n}$ and use the
dynamic programming principle to have
\begin{align*}
v^{\delta,n}\left(  \gamma,x\right)   &  \leq v_{\varepsilon}^{\delta
,n}(\gamma,x)+C\left(  \varepsilon+\omega^{\delta}\left(  C\varepsilon\right)
\right) \\
&  +\left(  \int_{0}^{1}\lambda^{1,2}(t)\Lambda^{1,2}\left(  t\right)
e^{-\delta t}dt+\Lambda^{1,2}(1)e^{-\delta}\right)  \sup_{\theta\in
\mathbb{M},z\in\mathbb{K}^{+}}\left\vert v^{\delta,n}(z)-v_{\varepsilon
}^{\delta,n}(z)\right\vert .
\end{align*}
We notice that
\begin{align*}
\int_{0}^{1}\lambda^{1,2}(t)\Lambda^{1,2}\left(  t\right)  e^{-\delta
t}dt+\Lambda^{1,2}(1)e^{-\delta}  &  =1-\delta\int_{0}^{1}e^{-\int_{0}%
^{t}\overline{\lambda}_{\gamma}\left(  \Phi_{s}^{0,x,u^{1},\varepsilon
u^{2};\gamma},u_{s}^{1},\varepsilon u_{s}^{2}\right)  ds}e^{-\delta t}dt\\
&  \leq\frac{\lambda_{\max}}{\lambda_{\max}+\delta}+\frac{\delta}%
{\lambda_{\max}+\delta}e^{-\left(  \lambda_{\max}+\delta\right)  }.
\end{align*}
Thus,
\begin{align*}
v^{\delta,n}\left(  \gamma,x\right)  -v_{\varepsilon}^{\delta,n}(\gamma,x)  &
\leq C\left(  \varepsilon+\omega^{\delta}\left(  C\varepsilon\right)  \right)
\\
&  +\left(  \frac{\lambda_{\max}}{\lambda_{\max}+\delta}+\frac{\delta}%
{\lambda_{\max}+\delta}e^{-\left(  \lambda_{\max}+\delta\right)  }\right)
\sup_{\theta\in\mathbb{M},z\in%
\mathbb{R}
^{N}}\left\vert v^{\delta,n}(\theta,z)-v_{\varepsilon}^{\delta,n}%
(\theta,z)\right\vert .
\end{align*}
Here, $\lambda_{\max}:=\sup\left\{  \lambda_{\gamma}\left(  x,u\right)
:\gamma\in\mathbb{M},x\in%
\mathbb{R}
^{N},u\in\mathbb{U}\right\}  <\infty.$ The conclusion follows by taking the
supremum over $\theta\in\mathbb{M}$ and $x\in\mathbb{K}^{+}$ and recalling
that $C$ is independent of $x$ and $\varepsilon>0$ (and $n\geq1$)$.$
\end{proof}

We consider $\left(  \rho_{\varepsilon}\right)  $ a sequence of standard
mollifiers i.e. $\rho_{\varepsilon}\left(  y\right)  =\frac{1}{\varepsilon
^{N}}\rho\left(  \frac{y}{\varepsilon}\right)  ,$ $y\in%
\mathbb{R}
^{N},$ $\varepsilon>0,$ where $\rho\in C^{\infty}\left(
\mathbb{R}
^{N}\right)  $ is a positive function such that
\[
Supp(\rho)\subset\overline{B}\left(  0,1\right)  \text{ and }\int_{%
\mathbb{R}
^{N}}\rho(x)dx=1.
\]
We introduce the convoluted functions
\[
v_{(\varepsilon)}^{\delta,n}\left(  \gamma,\cdot\right)  :=v_{\varepsilon
}^{\delta,n}\left(  \gamma,\cdot\right)  \ast\rho_{\varepsilon}.
\]
In analogy to \cite[Lemma 3.5]{Krylov_step_99}, one gets

\begin{proposition}
\label{PropEstimatesVdeltanEps}The value functions $v_{(\varepsilon)}%
^{\delta,n}$ are such that
\[
\left\{
\begin{array}
[c]{c}%
\underset{x\in\mathbb{K}}{\sup}\left(  \left\vert v_{(\varepsilon)}^{\delta
,n}\left(  \gamma,x\right)  \right\vert +\left\vert \partial_{x^{i}%
}v_{(\varepsilon)}^{\delta,n}\left(  \gamma,x\right)  \right\vert \right)
\leq C^{\delta}\varepsilon^{-1}\left(  \varepsilon+\omega^{\delta}\left(
\varepsilon\right)  \right)  ,\\
\underset{_{x,y\in\mathbb{K},\text{ }y\neq x}}{\sup}\frac{\left\vert
\partial_{x^{i}}v_{(\varepsilon)}^{\delta,n}\left(  \gamma,x\right)
-\partial_{x^{i}}v_{(\varepsilon)}^{\delta,n}\left(  \gamma,y\right)
\right\vert }{\left\vert x-y\right\vert }\leq C^{\delta}\varepsilon^{-1}%
\omega^{\delta}\left(  \left\vert x-y\right\vert \right)  \text{ }\\
\underset{x\in\mathbb{K}}{\sup}\left\vert v_{(\varepsilon)}^{\delta,n}\left(
\gamma,x\right)  -v^{\delta,n}\left(  \gamma,x\right)  \right\vert \leq
\omega^{\delta}\left(  \varepsilon\right)  +\eta^{\delta}\left(
\varepsilon\right)  ,
\end{array}
\right.
\]
for all $\gamma\in\mathbb{M}$. Here, $C^{\delta}$ is a positive constant
independent of $\varepsilon>0,$ $n\geq1$ and $\gamma\in\mathbb{M}.$
\end{proposition}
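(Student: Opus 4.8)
The plan is to derive all three displays as routine mollification estimates, since the real content --- a continuity modulus $\omega^{\delta}$ common to all the $v_{\varepsilon}^{\delta,n}$ and the shaking estimate $\eta^{\delta}$ --- has already been established in (\ref{omegadelta}) and Proposition \ref{propConvApp}. Before starting I would record three elementary facts to be used repeatedly: first, $|v_{\varepsilon}^{\delta,n}|\leq h_{\max}$, being an average of $h$; second, $\int_{\mathbb{R}^{N}}|\partial_{x^{i}}\rho_{\varepsilon}(z)|\,dz=\varepsilon^{-1}\int_{\mathbb{R}^{N}}|\partial_{i}\rho(z')|\,dz'=:C_{\rho}\varepsilon^{-1}$ while $\int_{\mathbb{R}^{N}}\partial_{x^{i}}\rho_{\varepsilon}(z)\,dz=0$; and third, that for $x\in\mathbb{K}\subset\overline{B}(0,k_{0})$ and $z$ in the support of $\rho_{\varepsilon}$ (so $|z|\leq\varepsilon\leq1$) the translated point $x-z$ lies in $\overline{B}(0,k_{0}+1)\subset\mathbb{K}^{+}$, which is where $\omega^{\delta}$ and Proposition \ref{propConvApp} are available.

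For the gradient bound in the first display I would move the derivative onto the mollifier and insert the zero-average identity,
\begin{equation*}
\partial_{x^{i}}v_{(\varepsilon)}^{\delta,n}(\gamma,x)=\int_{\mathbb{R}^{N}}\bigl(v_{\varepsilon}^{\delta,n}(\gamma,x-z)-v_{\varepsilon}^{\delta,n}(\gamma,x)\bigr)\,\partial_{x^{i}}\rho_{\varepsilon}(z)\,dz,
\end{equation*}
bounding $|v_{\varepsilon}^{\delta,n}(\gamma,x-z)-v_{\varepsilon}^{\delta,n}(\gamma,x)|\leq\omega^{\delta}(|z|)\leq\omega^{\delta}(\varepsilon)$ on the support of $\rho_{\varepsilon}$, which yields $|\partial_{x^{i}}v_{(\varepsilon)}^{\delta,n}(\gamma,x)|\leq C_{\rho}\varepsilon^{-1}\omega^{\delta}(\varepsilon)$. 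Combined with $|v_{(\varepsilon)}^{\delta,n}|\leq h_{\max}$ and the elementary inequality $\varepsilon^{-1}(\varepsilon+\omega^{\delta}(\varepsilon))\geq1$ (a consequence of $\omega^{\delta}(r)\geq cr$), this gives the first display with $C^{\delta}:=h_{\max}+C_{\rho}$. For the second display I would instead use the translation invariance of the convolution to write
\begin{equation*}
\partial_{x^{i}}v_{(\varepsilon)}^{\delta,n}(\gamma,x)-\partial_{x^{i}}v_{(\varepsilon)}^{\delta,n}(\gamma,y)=\int_{\mathbb{R}^{N}}\bigl(v_{\varepsilon}^{\delta,n}(\gamma,x-z)-v_{\varepsilon}^{\delta,n}(\gamma,y-z)\bigr)\,\partial_{x^{i}}\rho_{\varepsilon}(z)\,dz,
\end{equation*}
and then $|v_{\varepsilon}^{\delta,n}(\gamma,x-z)-v_{\varepsilon}^{\delta,n}(\gamma,y-z)|\leq\omega^{\delta}(|x-y|)$ together with $\int|\partial_{x^{i}}\rho_{\varepsilon}|\leq C_{\rho}\varepsilon^{-1}$ produces the stated Lipschitz-type bound on the gradient.

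Finally, for the third display I would split $v_{(\varepsilon)}^{\delta,n}-v^{\delta,n}=(v_{(\varepsilon)}^{\delta,n}-v_{\varepsilon}^{\delta,n})+(v_{\varepsilon}^{\delta,n}-v^{\delta,n})$. The first term is controlled by $\omega^{\delta}(\varepsilon)$, since $v_{(\varepsilon)}^{\delta,n}(\gamma,x)-v_{\varepsilon}^{\delta,n}(\gamma,x)=\int(v_{\varepsilon}^{\delta,n}(\gamma,x-z)-v_{\varepsilon}^{\delta,n}(\gamma,x))\rho_{\varepsilon}(z)\,dz$ and $\int\rho_{\varepsilon}=1$; the second is bounded by $\eta^{\delta}(\varepsilon)$ directly by Proposition \ref{propConvApp}, whose estimate holds on $\mathbb{K}^{+}\supset\mathbb{K}$. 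Summing gives the claim. I do not expect a genuine obstacle in this argument: the only points requiring care will be the uniformity of $C^{\delta}$ in $n$ and $\varepsilon$ --- inherited from the fact that $\omega^{\delta}$ and $\eta^{\delta}$ were built independently of these parameters --- and the bookkeeping that every translated argument $x-z$ stays inside $\mathbb{K}^{+}$, where the two moduli are defined.
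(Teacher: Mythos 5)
Your proof is correct and follows essentially the same route as the paper's: both arguments move the derivative onto the mollifier and exploit $\int\partial_{x^{i}}\rho_{\varepsilon}=0$ together with the $n$- and $\varepsilon$-uniform modulus $\omega^{\delta}$ for the two gradient bounds, and both split $v_{(\varepsilon)}^{\delta,n}-v^{\delta,n}$ into a mollification error (controlled by $\omega^{\delta}(\varepsilon)$) plus the shaking error $\eta^{\delta}(\varepsilon)$ from Proposition \ref{propConvApp}. Your added bookkeeping (the bound $|v_{\varepsilon}^{\delta,n}|\leq h_{\max}$ for the zeroth-order term and the check that translated arguments stay in $\mathbb{K}^{+}$) only makes explicit what the paper leaves implicit, and is consistent with it.
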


\begin{proof}
To prove the first inequality, one recalls the definition of $v_{(\varepsilon
)}^{\delta,n}$. Then, due to Proposition \ref{propConvApp} and using the
notation (\ref{omegadelta}), one gets%
\begin{align*}
\left\vert \partial_{x^{i}}v_{(\varepsilon)}^{\delta,n}\left(  x\right)
\right\vert  &  =\left\vert \varepsilon^{-1}\int_{\overline{B}\left(
0,1\right)  }v_{\varepsilon}^{\delta,n}\left(  x-\varepsilon y\right)
\partial_{x^{i}}\rho\left(  y\right)  dy\right\vert =\left\vert \varepsilon
^{-1}\underset{y\in\overline{B}\left(  0,1\right)  }{\int}\left(
v_{\varepsilon}^{\delta,n}\left(  x-\varepsilon y\right)  -v_{\varepsilon
}^{\delta,n}\left(  x\right)  \right)  \partial_{x^{i}}\rho\left(  y\right)
dy\right\vert \\
&  \leq C^{\delta}\varepsilon^{-1}\omega^{\delta}\left(  \varepsilon\right)  .
\end{align*}
Similarly,%
\[
\left\vert \partial_{x^{i}}v_{(\varepsilon)}^{\delta,n}\left(  x\right)
-\partial_{x^{i}}v_{(\varepsilon)}^{\delta,n}\left(  y\right)  \right\vert
=\left\vert \varepsilon^{-1}\int_{\overline{B}\left(  0,1\right)  }\left(
v_{\varepsilon}^{\delta,n}\left(  x-\varepsilon z\right)  -v_{\varepsilon
}^{\delta,n}\left(  y-\varepsilon z\right)  \right)  \partial_{x^{i}}%
\rho\left(  z\right)  dy\right\vert \leq C^{\delta}\varepsilon^{-1}%
\omega^{\delta}\left(  \left\vert x-y\right\vert \right)  .
\]
Moreover, again with the notation (\ref{omegadelta}) and the help of
Proposition \ref{propConvApp}, one gets
\begin{align*}
\sup_{x\in%
\mathbb{R}
^{N}}\left\vert v_{(\varepsilon)}^{\delta,n}\left(  x\right)  -v^{\delta
,n}\left(  x\right)  \right\vert  &  =\left\vert \underset{y\in\overline
{B}\left(  0,1\right)  }{\int}\left(  v_{\varepsilon}^{\delta,n}\left(
x-\varepsilon y\right)  -v_{\varepsilon}^{\delta,n}\left(  x\right)
+v_{\varepsilon}^{\delta,n}\left(  x\right)  -v^{\delta,n}\left(  x\right)
\right)  \rho\left(  y\right)  dy\right\vert \\
&  \leq\omega^{\delta}\left(  \varepsilon\right)  +\eta^{\delta}\left(
\varepsilon\right)  .
\end{align*}
The proof of our proposition is now complete.
\end{proof}

We now come to the proof of the main convergence result.

\begin{proof}
(of Theorem \ref{ThPiecewiseCtControl}). Let us fix $\left(  u^{1}%
,u^{2}\right)  \in\mathbb{U}\times\overline{B}\left(  0,1\right)  .$The
dynamic programming principle written for $v_{\varepsilon}^{\delta,n}$ yields
\[
v_{\varepsilon}^{\delta,n}\left(  \gamma,x\right)  \leq\mathbb{E}\left[
\begin{array}
[c]{c}%
\int_{0}^{T_{1}\wedge n^{-1}}\delta e^{-\delta t}h\left(  \Gamma_{t}%
^{\gamma,x,u^{1},\varepsilon u^{2}},X_{t}^{\gamma,x,u^{1},\varepsilon u^{2}%
}+\varepsilon u_{t}^{2},u_{t}^{1}\right)  dt\\
+e^{-\delta\left(  T_{1}\wedge n^{-1}\right)  }v_{\varepsilon}^{\delta
,n}\left(  \Gamma_{T_{1}\wedge n^{-1}}^{\gamma,x,u^{1},\varepsilon u^{2}%
},X_{T_{1}\wedge n^{-1}}^{\gamma x,u^{1},\varepsilon u^{2}}\right)
\end{array}
\right]  ,
\]
where $\left(  u^{1},u^{2}\right)  \in\mathcal{B}_{0}^{n}$ and $x\in
\mathbb{K}$. In particular, if $\left(  u_{t}^{1},u_{t}^{2}\right)  =\left(
u,y\right)  \in\mathbb{U}\times\overline{B}\left(  0,1\right)  ,$ for
$t\in\left[  0,n^{-1}\right)  $, one notices that on $\left[  0,T_{1}\wedge
n^{-1}\right)  ,$
\[
X_{t}^{\gamma,x-\varepsilon y,u^{1},\varepsilon u^{2}}=X_{t}^{\gamma
,x,u^{1},0}-\varepsilon y\text{ and }\Gamma_{t}^{\gamma,x,u^{1},\varepsilon
u^{2}}=\Gamma_{t}^{\gamma,x,u^{1},0}.
\]
As consequence, the (law of the) first jump time starting from $\left(
\gamma,x-\varepsilon y\right)  $ when the trajectory is controlled by the
couple $\left(  u^{1},\varepsilon u^{2}\right)  $ given above only depends on
$u$ and $x$ (but not on $\varepsilon,$ nor on $y$). To emphasize this
dependence, we denote it by $T_{1}^{x,u}$. Similarly, $\left(  \Gamma
_{T_{1}^{x,u}\wedge n^{-1}}^{\gamma,x,u^{1},\varepsilon u^{2}},X_{T_{1}%
^{x,u}\wedge n^{-1}}^{\gamma,x,u^{1},\varepsilon u^{2}}\right)  $ has the same
law as $\left(  \Gamma_{T_{1}^{x,u}\wedge n^{-1}}^{\gamma,x,u^{1},0}%
,X_{T_{1}^{x,u}\wedge n^{-1}}^{\gamma,x,u^{1},0}-\varepsilon y\right)  $. On
gets
\[
v_{\varepsilon}^{\delta,n}\left(  \gamma,x-\varepsilon y\right)
\leq\mathbb{E}\left[
\begin{array}
[c]{c}%
\int_{0}^{T_{1}^{x,u}\wedge n^{-1}}\delta e^{-\delta t}h\left(  \Gamma
_{t}^{\gamma,x,u^{1},0},X_{t}^{\gamma,x,u^{1},0},u_{t}^{1}\right)  dt\\
+e^{-\delta\left(  T_{1}^{x,u}\wedge n^{-1}\right)  }v_{\varepsilon}%
^{\delta,n}\left(  \Gamma_{T_{1}^{x,u}\wedge n^{-1}}^{\gamma,x,u^{1}%
,0},X_{T_{1}^{x,u}\wedge n^{-1}}^{\gamma,x,u^{1},0}-\varepsilon y\right)
\end{array}
\right]  .
\]
It follows that
\begin{align*}
&  v_{\left(  \varepsilon\right)  }^{\delta,n}\left(  \gamma,x\right)  \\
&  \leq\mathbb{E}\left[  \int_{0}^{T_{1}^{x,u}\wedge n^{-1}}\delta e^{-\delta
t}h\left(  \Gamma_{t}^{\gamma,x,u^{1},0},X_{t}^{\gamma,x,u,0},u_{t}%
^{1}\right)  dt+e^{-\delta\left(  T_{1}^{x,u}\wedge n^{-1}\right)  }v_{\left(
\varepsilon\right)  }^{\delta,n}\left(  \Gamma_{T_{1}^{x,u}\wedge n^{-1}%
}^{\gamma,x,u,0},X_{T_{1}^{x,u}\wedge n^{-1}}^{\gamma,x,u,0}\right)  \right]
.
\end{align*}
Applying It\^{o}'s formula to $v_{\left(  \varepsilon\right)  }^{\delta
,n}\left(  \Gamma_{t}^{\gamma,x,u,0},X_{t}^{\gamma,x,u,0}\right)  $ on
$\left[  0,T_{1}^{x,u}\wedge n^{-1}\right]  $ and recalling that $u_{t}^{1}=u$
prior to $n^{-1}$, it follows that%
\begin{align*}
0 &  \leq\mathbb{E}\left[  \int_{0}^{T_{1}^{x,u}\wedge n^{-1}}e^{-\delta
t}\left[
\begin{array}
[c]{c}%
\delta\left(  h\left(  \Gamma_{t}^{\gamma,x,u,0},X_{t}^{\gamma,x,u,0}%
,u\right)  -v_{\left(  \varepsilon\right)  }^{\delta,n}\left(  \Gamma
_{t}^{\gamma,x,u,0},X_{t}^{\gamma x,u,0}\right)  \right)  \\
+\mathcal{L}^{u}v_{\left(  \varepsilon\right)  }^{\delta,n}\left(  \Gamma
_{t}^{\gamma,x,u,0},X_{t}^{\gamma,x,u,0}\right)
\end{array}
\right]  \right]  dt\\
&  =\mathbb{E}\left[  \int_{0}^{T_{1}^{x,u}\wedge n^{-1}}e^{-\delta t}\left[
\begin{array}
[c]{c}%
\delta\left(  h\left(  \gamma,\Phi_{t}^{0,x,u,0;\gamma},u\right)  -v_{\left(
\varepsilon\right)  }^{\delta,n}\left(  \gamma,\Phi_{t}^{0,x,u,0;\gamma
}\right)  \right)  \\
+\mathcal{L}^{u}v_{\left(  \varepsilon\right)  }^{\delta,n}\left(  \gamma
,\Phi_{t}^{0,x,u^{1},0;\gamma}\right)
\end{array}
\right]  \right]  dt\\
&  \leq\mathbb{E}\left[  \int_{0}^{T_{1}^{x,u}\wedge n^{-1}}e^{-\delta
t}dt\right]  \left(  \delta\left(  h\left(  \gamma,x,u\right)  -v_{\left(
\varepsilon\right)  }^{\delta,n}\left(  \gamma,x\right)  \right)
+\mathcal{L}^{u}v_{\left(  \varepsilon\right)  }^{\delta,n}\left(
\gamma,x\right)  \right)  \\
&  +\mathbb{E}\left[  \int_{0}^{T_{1}^{x,u}\wedge n^{-1}}e^{-\delta t}\left[
\begin{array}
[c]{c}%
\delta\left(  \left\vert h\right\vert _{1}+C^{\delta}\varepsilon^{-1}\left(
\varepsilon+\omega^{\delta}\left(  \varepsilon\right)  \right)  \right)
\left\vert f\right\vert _{1}t+\left\vert f\right\vert _{1}^{2}tC^{\delta
}\varepsilon^{-1}\left(  \varepsilon+\omega^{\delta}\left(  \varepsilon
\right)  \right)  \\
+\left\vert f\right\vert _{1}C^{\delta}\varepsilon^{-1}\omega^{\delta}\left(
\frac{\left\vert f\right\vert _{1}}{n}\right)  +2\left\vert h\right\vert
_{1}\left\vert \lambda\right\vert _{1}\left\vert f\right\vert _{1}t\\
+\left\vert \lambda\right\vert _{1}C^{\delta}\varepsilon^{-1}\left(
\varepsilon+\omega^{\delta}\left(  \varepsilon\right)  \right)  (1+\left\vert
g\right\vert _{1})\left\vert f\right\vert _{1}t
\end{array}
\right]  dt\right]  \\
&  \leq\mathbb{E}\left[  T_{1}^{x,u}\wedge n^{-1}\right]  \left(  \left[
\delta\left(  h\left(  \gamma,x,u\right)  -v_{\left(  \varepsilon\right)
}^{\delta,n}\left(  \gamma,x\right)  \right)  +\mathcal{L}^{u}v_{\left(
\varepsilon\right)  }^{\delta,n}\left(  \gamma,x\right)  \right]  \right)  \\
&  +\mathbb{E}\left[  T_{1}^{x,u}\wedge n^{-1}\right]  \widetilde{C}^{\delta
}\left(  \frac{1+\varepsilon^{-1}\omega^{\delta}\left(  \varepsilon\right)
}{n}+\varepsilon^{-1}\omega^{\delta}\left(  \frac{\left\vert f\right\vert
_{1}}{n}\right)  \right)  .
\end{align*}
The generic constant $\widetilde{C}^{\delta}$ is independent of $x\in
\mathbb{K},\gamma\in\mathbb{M},u\in\mathbb{U},\varepsilon>0$ and $n\geq1$ and
may change from one line to another. As consequence,%
\[
\delta\left(  h\left(  \gamma,x,u\right)  -v_{\left(  \varepsilon\right)
}^{\delta,n}\left(  \gamma,x\right)  \right)  +\mathcal{L}^{u}v_{\left(
\varepsilon\right)  }^{\delta,n}\left(  \gamma,x\right)  \geq-\widetilde{C}%
^{\delta}\left(  \frac{1+\varepsilon^{-1}\omega^{\delta}\left(  \varepsilon
\right)  }{n}+\varepsilon^{-1}\omega^{\delta}\left(  \frac{\left\vert
f\right\vert _{1}}{n}\right)  \right)  .
\]
We fix (for the time being), the initial configuration $\left(  \gamma
_{0},x_{0}\right)  \in\mathbb{M\times K}$ and an arbitrary control $u^{1}%
\in\mathcal{A}_{ad}$. We apply the previous inequality for $\left(
\gamma,x\right)  =\left(  \Gamma_{t}^{\gamma_{0},x_{0},u^{1},0},X_{t}%
^{\gamma_{0},x_{0},u^{1},0}\right)  $, integrate the inequality with respect
to $e^{-\delta t}dt$ on $\left[  0,T\right]  $ for $T>0$ and use It\^{o}'s
formula to get%
\begin{align*}
\mathbb{E}\left[  \int_{0}^{T}\delta e^{-\delta t}h\left(  \Gamma_{t}%
^{\gamma_{0},x_{0},u^{1},0},X_{t}^{\gamma_{0},x_{0},u^{1},0},u_{t}^{1}\right)
dt\right]   &  \geq v_{\left(  \varepsilon\right)  }^{\delta,n}\left(
\gamma_{0},x_{0}\right)  -e^{-\delta T}v_{\left(  \varepsilon\right)
}^{\delta,n}\left(  \Gamma_{T}^{\gamma_{0},x_{0},u^{1},0},X_{T}^{\gamma
_{0},x_{0},u^{1},0}\right)  \\
&  -\widetilde{C}^{\delta}\left(  \frac{1+\varepsilon^{-1}\omega^{\delta
}\left(  \varepsilon\right)  }{n}+\varepsilon^{-1}\omega^{\delta}\left(
\frac{\left\vert f\right\vert _{1}}{n}\right)  \right)  .
\end{align*}
One lets $T\rightarrow\infty$ and takes the infimum over $u^{1}\in
\mathcal{A}_{ad}$ to get%
\[
v^{\delta}\left(  \gamma_{0},x_{0}\right)  \geq v_{\left(  \varepsilon\right)
}^{\delta,n}\left(  \gamma_{0},x_{0}\right)  -\widetilde{C}^{\delta}\left(
\frac{1+\varepsilon^{-1}\omega^{\delta}\left(  \varepsilon\right)  }%
{n}+\varepsilon^{-1}\omega^{\delta}\left(  \frac{\left\vert f\right\vert _{1}%
}{n}\right)  \right)  .
\]
Finally, using the third estimate in Proposition \ref{PropEstimatesVdeltanEps}%
, one gets
\[
v^{\delta}\left(  \gamma_{0},x_{0}\right)  \geq v^{\delta,n}\left(  \gamma
_{0},x_{0}\right)  -\widetilde{C}^{\delta}\left(  \frac{1+\varepsilon
^{-1}\omega^{\delta}\left(  \varepsilon\right)  }{n}+\varepsilon^{-1}%
\omega^{\delta}\left(  \frac{\left\vert f\right\vert _{1}}{n}\right)
+\omega^{\delta}\left(  \varepsilon\right)  +\eta^{\delta}\left(
\varepsilon\right)  .\right)  .
\]
The conclusion follows by taking $\varepsilon=\left(  \omega^{\delta}\left(
\frac{\left\vert f\right\vert _{1}}{n}\right)  \right)  ^{1-\eta},$ for some
$1>\eta>0$ (e.g. $\eta=\frac{1}{2}$). Our result is now complete.
\end{proof}

\bibliographystyle{plain}
\bibliography{bibliografie_06092014}

\def\cprime{$'$}
\begin{thebibliography}{10}

\bibitem{Almudevar_2001}
A~Almudevar.
\newblock {A dynamic programming algorithm for the optimal control of piecewise
  deterministic Markov processes}.
\newblock {\em {SIAM J. Control Optim.}}, {40}({2}):{525--539}, {AUG 30}
  {2001}.

\bibitem{Arisawa_98_1}
M.~Arisawa.
\newblock {Ergodic problem for the Hamilton-Jacobi-Bellman equation. I.
  Existence of the ergodic attractor}.
\newblock {\em Annales de l'Institut Henri Poincare (C) Non Linear Analysis},
  14(4):415 -- 438, 1997.

\bibitem{Arisawa_98_2}
M.~Arisawa.
\newblock {Ergodic problem for the Hamilton-Jacobi-Bellman equation. II}.
\newblock {\em Annales de l'Institut Henri Poincare (C) Non Linear Analysis},
  15(1):1 -- 24, 1998.

\bibitem{arisawa_lions_stoc_98}
M.~Arisawa and P.-L. Lions.
\newblock {On ergodic stochastic control}.
\newblock {\em Commun. Partial Differ. Equations}, {23}({11-12}):{2187--2217},
  {1998}.

\bibitem{artstein_gaitsgory_2000}
Z.~Artstein and V.~Gaitsgory.
\newblock {The value function of singularly perturbed control systems}.
\newblock {\em {Applied Mathematics and Optimization}}, {41}({3}):{425--445},
  {May-Jun} {2000}.

\bibitem{aubin_frankowska_90}
J.P. Aubin and H.~Frankowska.
\newblock {\em Set-valued analysis}.
\newblock Birkh\"auser, Boston, 1990.

\bibitem{barles_jakobsen_02}
G.~Barles and E.~R. Jakobsen.
\newblock On the convergence rate of approximation schemes for
  {H}amilton-{J}acobi-{B}ellman equations.
\newblock {\em ESAIM, Math. Model. Numer. Anal.}, 36(1):M2AN, Math. Model.
  Numer. Anal., 2002.

\bibitem{Basak_Borkar_Ghosh}
G.K. Basak, V.S. Borkar, and M.K. Ghosh.
\newblock {Ergodic control of degenerate diffusions}.
\newblock {\em {Stochastic Analysis and Applications}}, {15}({1}):{1--17},
  {1997}.

\bibitem{Benaim_LEBorgne_Malrieu_Zitt_2012}
M.~Bena{\"\i}m, S.~Le~Borgne, F.~Malrieu, and P.-A. Zitt.
\newblock Quantitative ergodicity for some switched dynamical systems.
\newblock {\em Electron. Commun. Probab.}, 17:no. 56, 1--14, 2012.

\bibitem{bettiol_2005}
P.~Bettiol.
\newblock {On ergodic problem for Hamilton-Jacobi-Isaacs equations}.
\newblock {\em {ESAIM-Control Optimisation and Calculus of Variations}},
  {11}({4}):{522--541}, {2005}.

\bibitem{borkar_gaitsgory_07}
V.~Borkar and V.~Gaitsgory.
\newblock Averaging of singularly perturbed controlled stochastic differential
  equations.
\newblock {\em Appl. Math. Optimization}, 56(2):169--209, 2007.

\bibitem{Boxma_Kaspi_Kella_Perry_2005}
Onno Boxma, Haya Kaspi, Offer Kella, and David Perry.
\newblock On/off storage systems with state-dependent input, output, and
  switching rates.
\newblock {\em Probability in the Engineering and Informational Sciences},
  19:1--14, 1 2005.

\bibitem{Bremaud_1981}
Pierre Br{\'e}maud.
\newblock {\em Point processes and queues : martingale dynamics}.
\newblock Springer series in statistics. Springer-Verlag, New York, 1981.

\bibitem{G2}
R.~Buckdahn, D.~Goreac, and M.~Quincampoix.
\newblock {Existence of Asymptotic Values for Nonexpansive Stochastic Control
  Systems}.
\newblock {\em {Applied Mathematics and Optimization}}, 70(1):1--28, 2014.

\bibitem{buckdahn_ichihara_2005}
R.~Buckdahn and N.~Ichihara.
\newblock {Limit theorem for controlled backward SDEs and homogenization of
  Hamilton-Jacobi-Bellman equations}.
\newblock {\em {Applied Mathematics and Optimization}}, {51}({1}):{1--33},
  {JAN} {2005}.

\bibitem{cook_gerber_tapscott_98}
D.~L. Cook, A.~N. Gerber, and S.~J. Tapscott.
\newblock Modelling stochastic gene expression: Implications for
  haploinsufficiency.
\newblock {\em Proc. Natl. Acad. Sci. USA}, 95:15641--15646, 1998.

\bibitem{Costa_89}
O.~Costa.
\newblock {Average impulse control of piecewise deterministic processes}.
\newblock {\em IMA Journal of Mathematical Control and Information},
  {6}({4}):{375--397}, {1989}.

\bibitem{Costa_Dufour_2009}
O.~L.~V. Costa and F.~Dufour.
\newblock The vanishing discount approach for the average continuous control of
  piecewise deterministic {M}arkov processes.
\newblock {\em {Journal of Applied Probability}}, {46}({4}):{1157--1183}, {DEC}
  {2009}.

\bibitem{Costa_Doufour_2010}
O.~L.~V. Costa and F.~Dufour.
\newblock {Average continuous control of piecewise deterministic
  {M}arkovprocesses}.
\newblock {\em {SIAM J. Control Optim.}}, {48}({7}):{4262--4291}, {2010}.

\bibitem{Costa_Dufour_12}
O.~L.~V. Costa and F.~Dufour.
\newblock Continuous control of piecewise deterministic {M}arkov processes with
  long run average cost.
\newblock In {\em Stochastic processes, finance and control}, volume~1 of {\em
  Adv. Stat. Probab. Actuar. Sci.}, pages 415--449. World Sci. Publ.,
  Hackensack, NJ, 2012.

\bibitem{CostaDufour_Book2013}
O.~L.~V. Costa and F.~Dufour.
\newblock {\em Continuous average control of piecewise deterministic Markov
  processes}.
\newblock Springer Briefs in Mathematics. Springer, New York, 2013.

\bibitem{crudu_Debussche_Muller_Radulescu_2012}
A.~Crudu, A.~Debussche, A.~Muller, and O.~Radulescu.
\newblock Convergence of stochastic gene networks to hybrid piecewise
  deterministic processes.
\newblock {\em The Annals of Applied Probability}, 22(5):1822--1859, 10 2012.

\bibitem{crudu_debussche_radulescu_09}
A.~Crudu, A.~Debussche, and O.~Radulescu.
\newblock Hybrid stochastic simplifications for multiscale gene networks.
\newblock {\em BMC Systems Biology}, page 3:89, 2009.

\bibitem{Davis_84}
M.~H.~A. Davis.
\newblock {Piecewise-deterministic Markov-processes - A general-class of
  non-diffusion stochastic-models}.
\newblock {\em {Journal of the Royal Statistical Society Series
  B-Methodological}}, {46}({3}):{353--388}, {1984}.

\bibitem{Davis_86}
M.~H.~A. Davis.
\newblock {Control of Piecewise-deterministic processes via discrete-time
  dynamic-programming}.
\newblock {\em {Lect. Notes Control Inf. Sci.}}, {78}:{140--150}, {1986}.

\bibitem{davis_93}
M.~H.~A. Davis.
\newblock {\em {M}arkov models and optimization}, volume~49 of {\em Monographs
  on Statistics and Applied Probability}.
\newblock Chapman \& Hall, London, 1993.

\bibitem{Dempster_Ye_96}
M.A.H. Dempster and J.J. Ye.
\newblock {Generalized Bellman-Hamilton-Jacobi optimality conditions for a
  control problem with a boundary condition}.
\newblock {\em {Appl. Math. Optim.}}, {33}({3}):{211--225}, {MAY-JUN} {1996}.

\bibitem{DufourDutuitGonzalesZhang2008}
F.~Dufour, Y.~Dutuit, K.~Gonzalez, and H.~Zhang.
\newblock Piecewise deterministic markov processes and dynamic reliability.
\newblock {\em Proceedings of the Institution of Mechanical Engineers, Part O :
  Journal of Risk and Reliability}, (222):545--551, 2008.

\bibitem{feller_1971}
W.~Feller.
\newblock {\em An Introduction to Probability Theory and its Applications Vol.
  II}.
\newblock New York: John Wiley \& Sons, 2nd edition, 1971.

\bibitem{Forwick_Schal_Schmitz_2004}
L.~Forwick, M.~Schal, and M.~Schmitz.
\newblock {Piecewise deterministic Markov control processes with feedback
  controls and unbounded costs}.
\newblock {\em {Acta Appl. Math.}}, {82}({3}):{239--267}, {JUL} {2004}.

\bibitem{Gatarek_92}
D.~Gatarek.
\newblock {Optimality conditions for impulsive control of
  piecewise-deterministic processes}.
\newblock {\em {Math. Control Signal Syst.}}, {5}({2}):{217--232}, {1992}.

\bibitem{GoreacSerea_TauberianPDMP_2014}
D.~Goreac and O.-S. Serea.
\newblock Uniform asymptotics in the average continuous control of piecewise
  deterministic markov processes : Vanishing approach.
\newblock In {\em ESAIM: Proc. (Journ{\'e}es SMAI 2013)}, volume~45, pages
  168--177, Sept. 2014.

\bibitem{G8}
Dan Goreac.
\newblock {Viability, Invariance and Reachability for Controlled Piecewise
  Deterministic Markov Processes Associated to Gene Networks}.
\newblock {\em {ESAIM-Control Optimisation and Calculus of Variations}},
  {18}({2}):{401--426}, {APR} {2012}.

\bibitem{G7}
Dan Goreac and Oana-Silvia Serea.
\newblock {Linearization Techniques for Controlled Piecewise Deterministic
  Markov Processes; Application to Zubov's Method}.
\newblock {\em Applied Mathematics and Optimization}, 66:209--238, 2012.

\bibitem{graham2009}
Carl Graham and Philippe Robert.
\newblock Interacting multi-class transmissions in large stochastic networks.
\newblock {\em The Annals of Applied Probability}, 19(6):2334--2361, 12 2009.

\bibitem{Hardy_Littlewood_1914}
G.~H. Hardy and J.~E. Littlewood.
\newblock Tauberian theorems concerning power series and dirichlet's series
  whose coefficients are positive.
\newblock {\em Proceedings of the London Mathematical Society},
  s2-13(1):174--191, 1914.

\bibitem{hasty_pradines_dolnik_collins_00}
J.~Hasty, J.~Pradines, M.~Dolnik, and J.J. Collins.
\newblock Noise-based switches and amplifiers for gene expression.
\newblock {\em PNAS}, 97(5):2075--2080, 2000.

\bibitem{Ikeda_Watanabe_1981}
N.~Ikeda and S.~Watanabe.
\newblock {\em Stochastic Differential Equations and Diffusion Processes},
  volume~24 of {\em North-Holland Mathematical Library}.
\newblock North-Holland Publishing Co., Amsterdam--New York; Kodansha, Ltd.,
  Tokyo, 1981.

\bibitem{Jacobsen}
Martin Jacobsen.
\newblock {\em Point Process Theory And Applications. Marked Point and
  Piecewise Deterministic Processes}.
\newblock Birkh{\"a}user Verlag GmbH, 2006.

\bibitem{Krylov_step_99}
N.~Krylov.
\newblock Approximating value functions for controlled degenerate diffusion
  processes by using piece-wise constant policies.
\newblock {\em Electron. J. Probab.}, 4:no. 2, 1--19, 1999.

\bibitem{krylov_00}
N.~V. Krylov.
\newblock On the rate of convergence of finite-difference approximations for
  {B}ellman's equations with variable coefficients.
\newblock {\em Probab. Theory Related Fields}, 117(1):1--16, 2000.

\bibitem{lions_papanicolaou_varadhan}
P.-L. Lions, G.~Papanicolaou, and S.~R.~S. Varadhan.
\newblock {Homogenization of Hamilton-Jacobi Equations}.
\newblock {\em unpublished work}.

\bibitem{oliubarton:hal-00661833}
M.~Oliu-Barton and G.~Vigeral.
\newblock {A uniform Tauberian theorem in optimal control}.
\newblock In P.Cardaliaguet and R.Cressman, editors, {\em {Annals of the
  International Society of Dynamic Games vol 12 : Advances in Dynamic Games}}.
  Birkh{\"a}user Boston, 2013.
\newblock 14 pages.

\bibitem{quincampoix_renault_2011}
M.~Quincampoix and J.~Renault.
\newblock On the existence of a limit value in some nonexpansive optimal
  control problems.
\newblock {\em SIAM Journal on Control and Optimization}, 49(5):2118--2132,
  2011.

\bibitem{renault_2011}
J.~Renault.
\newblock {Uniform value in dynamic programming}.
\newblock {\em {Journal of the European Mathematical Society}},
  {13}({2}):{309--330}, {2011}.

\bibitem{richou_2009}
A.~Richou.
\newblock {Ergodic BSDEs and related PDEs with Neumann boundary conditions}.
\newblock {\em {Stochastic Processes and Their Applications}},
  {119}({9}):{2945--2969}, {SEP} {2009}.

\bibitem{Rolski_Schmidli_2009}
T.~Rolski, H.~Schmidli, V.~Schmidt, and J.~Teugels.
\newblock {\em Stochastic Processes for Insurance and Finance}, volume 505 of
  {\em Wiley Series in Probability and Statistics}.
\newblock John Wiley and Sons, 2009.

\bibitem{Soner86_2}
H.~M. Soner.
\newblock Optimal control with state-space constraint. {II}.
\newblock {\em SIAM J. Control Optim.}, 24(6):1110--1122, 1986.

\bibitem{Wainrib_Thieullen_Pakdaman_2010}
Gilles Wainrib, Thieullen Mich{\`e}le, and Khashayar Pakdaman.
\newblock Intrinsic variability of latency to first-spike.
\newblock {\em Biological Cybernetics}, 103(1):43--56, 2010.

\end{thebibliography}

\end{document}